\documentclass[11pt,a4paper]{article}
\usepackage{comment,listings,framed,graphicx,url,xcolor,enumerate, hyperref, amsmath, float, multicol, multirow}
\usepackage{fullpage} 
\usepackage{todonotes}
\definecolor{mild}{rgb}{1,0.98,0.8}
\usepackage[utf8]{inputenc}    
\usepackage[T1]{fontenc}       
\usepackage{lmodern}           
\usepackage[margin=.8in]{geometry}
\usepackage{graphicx}          
\usepackage{amsmath,amssymb}   
\usepackage{booktabs}          
\usepackage{caption}           
\usepackage{subcaption}        
\usepackage{microtype}         
\usepackage{hyperref}          
\usepackage{xcolor}            
\usepackage{amsthm}
\usepackage{amsmath}
\DeclareMathOperator{\sech}{sech}
\usepackage[numbers,sort]{natbib}

\numberwithin{equation}{section}
\numberwithin{figure}{section}
\numberwithin{table}{section}

\newtheorem{theorem}{Theorem}[section]   
\newtheorem{lemma}[theorem]{Lemma}       

\theoremstyle{definition}

\theoremstyle{remark}
\newtheorem{remark}[theorem]{Remark}

\hypersetup{
  colorlinks=true,
  linkcolor=blue,
  citecolor=blue,
  urlcolor=blue
}

\title{Structure Preserving High-Order FDTD Methods for the Nonlinear Kerr-Debye Model with Lorentz Dispersion}
\author{%
   Emmanuel E. Oguadimma\thanks{Oregon State University, oguadime@oregonstate.edu},\, \, 
   Vrushali A. Bokil \thanks{Oregon State University, bokilv@oregonstate.edu}, \, \,
   Nathan L.~Gibson \thanks{Oregon State University, gibsonn@oregonstate.edu}
}
\date{\today}

\begin{document}

\maketitle

\begin{abstract}
We develop and analyze several families of fully discrete, high-order methods in the finite difference time domain (FDTD) framework for a nonlinear dispersive electromagnetic model for ultrashort pulse propagation in a nonlinear optical medium. The model consists of the time domain Maxwell's equations coupled to two ordinary differential equations; the nonlinear Kerr-Debye equation and the linear dispersive Lorentz equation, in one space dimension. The proposed methods combine staggered spatial discretizations of $2M$th-order, for integer $M>0$, with second-order explicit leapfrog or implicit trapezoidal time integrators. To handle the stiff relaxation in the nonlinear Kerr-Debye equation, we adopt a modified exponential integrator, introduced in \cite{PengJCP2020}, which ensures asymptotic-preserving (AP) behavior in the limit of vanishing relaxation time. We prove that the resulting schemes satisfy a discrete analogue of the continuous energy identity and preserve the positivity (PP) of the nonlinear susceptibility. For the schemes based on the leapfrog time integrator, we establish energy stability under a CFL condition, while the schemes based on the trapezoidal time integrator are unconditionally energy stable. High-order temporal accuracy in the trapezoidal time integrator based schemes is further achieved via Richardson extrapolation yielding $2M$th-order accurate methods in both space and time. Numerical experiments demonstrate the theoretical convergence rates and the ability of the proposed schemes to capture nonlinear wave phenomena such as soliton propagation, self-steepening, and odd harmonic generation. This work extends previous AP-PP and energy stable approaches in a discontinuous Galerkin framework to the FDTD setting, offering a computationally efficient, flexible and robust tool for simulating electromagnetic wave propagation in nonlinear dispersive optical media.

\end{abstract}

\noindent\textbf{Keywords:} Maxwell's equations, High-Order FDTD, Nonlinear Kerr-Debye model, Linear Lorentz dispersion, Richardson extrapolation.

\section{Introduction}

Electromagnetic wave propagation in nonlinear materials plays an important role in fields such as plasma physics, modern microelectronics, nonlinear optics and photonics, laser technology and micromagnetics. The study of electromagnetic wave propagation in nonlinear optical media has been a subject of significant research for several decades \cite{gilles2000comparison,gilles1999electromagnetic,goorjian2002computational,hile1996numerical,taflove2000computational,ziolkowski1993full}. Nonlinear optics and photonics has grown in importance since the invention of the laser in 1960, which provided a ready source of high-intensity, near-monochromatic light. Among the key nonlinear effects encountered during wave propagation in such media are second and third harmonic generation, self-focusing, defocusing, and self-channeling \cite{agrawal2019nonlinear,boyd2019nonlinear}. Given the diverse ways in which nonlinearities influence wave dynamics, it is important to develop numerical methods that are accurate, stable, efficient, and capable of preserving key structural properties of the underlying mathematical model.

The propagation of electromagnetic waves in a nonlinear optical medium is governed by Maxwell's equations in the time domain, coupled with constitutive relations that describe the material response. 
In a linear dielectric medium, this response is characterized by the macroscopic polarization field, which represents the average dipole moment per unit volume. 
In a one-dimensional formulation, the linear polarization $P$ is given by \cite{joseph1991direct}
\begin{equation}
    P = \epsilon_0 \chi^{(1)} E,
\end{equation}
where $\epsilon_0$ is the permittivity of free space, $\chi^{(1)}$ is the linear or first order susceptibility, and $E$ is the propagating electric field. The emergence of ultrashort laser pulses capable of generating very high electric fields transformed our understanding of light matter interactions. Under such intense fields, the polarization response of a material is no longer strictly linear and can be expressed as a power series in the electric field \cite{joseph1997fdtd}:
\begin{equation}\label{eq:pol}
P = \epsilon_0 \left(\chi^{(1)}E + \chi^{(2)}E^2 + \chi^{(3)}E^3 + \chi^{(4)}E^4 + \cdots \right),
\end{equation}
where $\chi^{(n)}$ are the nonlinear susceptibilities of order $n\in \mathbb{N}, n \geq 2$. As a result, in addition to the well-understood linear behavior, intense electric fields can induce specific nonlinear effects in materials, depending on which term in the series in \eqref{eq:pol} is the dominant term. For specific types of susceptibilties, the relationship between the polarization and the electric field can be converted into linear or nonlinear ordinary differential equations (ODEs) \cite{banks2000electromagnetic,gilles2000comparison}. These ODEs are appended to Maxwell's equations to provide a coupled model for electromagnetic wave propagation in the nonlinear optical material. This approach is called the auxilliary differential equation method (ADE) and is a popular technique for constructing such models \cite{taflove2000computational} 

In this paper, we consider nonlinear optical materials that exhibit first order (linear) and third order nonlinear effects. The linear response of the nonlinear optical medium to the propagating electromagnetic waves in the one-dimensional case is modeled using the linear Lorentz model, an ordinary differential equation model for a harmonic oscillator driven by the electric field. The third order nonlinear response is modeled by the Kerr-Debye model, a nonlinear ordinary differential equation driven by the electric field. Similar to the electro-optical Kerr effect \cite{weinberger2008john}, where the refractive index of a material changes proportionally to the square of a static electric field, the presence of oscillating electric fields leads to a quadratic intensity-dependent change in the susceptibility. This phenomenon is known as the optical Kerr effect \cite{gilles2000comparison, PengJCP2020}. The corresponding cubic nonlinearity in the polarization is frequently
treated as an instantaneous response, which is a valid approximation when the pulse duration is very short
\cite{sorensen2002solitary,webb2004variational}. The nonlinear Kerr-Debye model discussed here includes a relaxation mechanism within the Kerr effect. This relaxation often occurs on a time scale of less than a femtosecond, and the standard instantaneous Kerr model is recovered when relaxation is neglected. However, in describing ultrashort pulses, the effect of a finite relaxation time can be important \cite{gilles1999electromagnetic, PengJCP2020,ziolkowski1993full}. For example, in \cite{gilles1999electromagnetic}, the authors analyzed the propagation of ultrashort pulses through a dispersive medium, where the linear response is modeled by three Lorentz dispersion terms and the nonlinear Kerr effect includes a finite Debye relaxation time. Notable consequences of this relaxation time include the suppression of phase matched harmonic generation and a slowdown in the self-steepening behavior of the electric field envelope \cite{gilles1999electromagnetic}.

The finite difference time domain (FDTD) method, also known as the Yee scheme \cite{yee1966numerical}, was first introduced by Kane Yee in 1966 for the numerical discretization of the 3D time domain Maxwell's equations in a linear medium like vacuum. It remains one of the most widely adopted numerical approaches to simulate electromagnetic wave propagation in linear and nonlinear materials. The original numerical method for linear media is nondissipative but dispersive \cite{taflove2000computational}, and it discretizes Maxwell's equations on a staggered spatial and temporal grid, resulting in second-order accuracy in both space and time. One of the primary advantages of the Yee scheme is its simplicity and explicit formulation, which enables direct time integration without the need for global matrix inversion. This characteristic also makes it highly amenable to efficient implementation on parallel computing platforms. As a result, it is especially well suited for modeling wave propagation in nonlinear media, where the material properties depend on the local electromagnetic field.

The Yee-FDTD method has been extended to discretize materials characterized as linear dispersive, nonlinear and nonlinear dispersive optical media \cite{kashiwa1990treatment,joseph1991direct,goorjian2002computational, hile1996numerical, sorensen2005kink, gilles1999electromagnetic, ziolkowski1994nonlinear, liu2022fdtd}. Recent developments have extended the second-order Yee-FDTD method to achieve arbitrary spatial accuracy for models in nonlinear optics. For example, several studies have explored higher order Yee-FDTD schemes applied to Maxwell's equations in both linear \cite{bokil2012analysis, BokilGibsonXXXX} and nonlinear  media \cite{bokil2018high,gilles2000comparison}. Beyond FDTD, a variety of other numerical techniques have been developed for electromagnetic simulations, including finite element methods (FEM) \cite{li2006analysis, li2007error}, high-order discontinuous Galerkin methods \cite{PengJCP2020, gedney2012discontinuous, huang2011interior, zhu2012hybrid,huang2017second}, finite volume methods \cite{de2000high}, and Godunov schemes \cite{AregbaDriolletHanouzet2011, carbou2009relaxation}.

Numerical simulation of wave propagation phenomenon typically introduces numerical dispersion and dissipation errors that need to be controlled. The nonlinear Kerr-Debye model introduces further challenges that require special consideration. In the Kerr-Debye framework, the relaxation of the Kerr nonlinearity is governed by an auxiliary differential equation (ADE) for the third order nonlinear susceptibility, $\chi :=\chi^{(3)}$ given as 
\begin{align}\label{debyeade}
    \partial_t \chi = -\frac{1}{\varepsilon}(\chi - E^2).
\end{align}
Here, the parameter $\varepsilon > 0$ denotes the relaxation time.
In the limiting case, as $\varepsilon$ approaches zero, $\chi$
converges to $E^2$, thus reducing the model to the classical Kerr formulation \cite{carbou2007relaxation,carbou2009relaxation}.

The nonlinear susceptibility $\chi$ is nonnegative, and it is desirable for a numerical scheme to preserve this property at the discrete level. This requirement is important because the nonnegativity of $\chi$ is closely related to the energy stability of the model. Numerical methods that enforce this discrete nonnegativity property are commonly referred to as positivity-preserving (PP) schemes \cite{PengJCP2020}. Additionally, equation \eqref{debyeade} becomes stiff when $\varepsilon$ is very small, so conventional numerical methods may be subject to a strict stability restriction, which typically requires the time step to scale as $\Delta t = \mathcal{O}(\varepsilon)$. To address both the stiffness and positivity requirements, we employ a modified exponential time integrator, as proposed in \cite{PengJCP2020}.

Our goal is to devise and analyze a family of high-order FDTD methods for Maxwell's equations in the nonlinear optical medium that preserves key properties of the nonlinear Kerr-Debye and linear Lorentz model at the discrete level. We aim to capture the effects of finite relaxation time and propose schemes that are asymptotic-preserving (AP), positivity-preserving (PP) and preserve a discrete analog of the continuous energy decay property. AP methods remain stable and accurate for a wide range of relaxation times $\varepsilon$, and as $\varepsilon \to 0$, they recover the limiting Kerr-Lorentz model without requiring fine resolution. 

To construct the proposed methods, we discretize Maxwell's equations in space using high-order staggered Yee type schemes and a staggered leapfrog time-stepping method, providing overall second-order accuracy. Following \cite{bokil2018high}, we also construct an implicit family of methods based on a second-order trapezoidal discretization in time while maintaining the high-order staggered discretizations in space. Both formulations allow arbitrarily high spatial accuracy, similar to the methods in \cite{bokil2018high}. We combine all discretizations for Maxwell's equations, explicit or implicit, with a trapezoidal time integrator for evolving the Kerr-Debye and Lorentz auxiliary differential equations (ADEs). Thus, the discretizations of the ordinary differential equations (ODEs) modeling the material response are implicit and second-order accurate in time. 

We further increase the temporal accuracy of the implicit time methods using Richardson extrapolation \cite{leveque1998finite, chen2025developing,ascoli2018limitations,bach2021effectiveness}. For the limiting model coupling Maxwell's equations with the nonlinear Kerr model and the linear Lorentz model, the temporal error expansion contains only even powers of the timestep, $\Delta t$, allowing the construction of $(2M,2M)$ schemes from $M$ successively refined solutions. For the model coupling Maxwell's equations with the nonlinear Kerr-Debye model and the linear Lorentz model, the modified exponential integrator introduces both even and odd powers of $\Delta t$ in the temporal error expansion. We therefore develop a modified Richardson extrapolation that uses $2M-1$ successively refined solutions to obtain $2M$th-order accuracy in time. We denote schemes that are second-order in time and $2M$th-order in space by $(2,2M)$ schemes, and schemes that are $2M$th-order accurate in both time and space by $(2M,2M)$ schemes. The underlying $(2,2M)$ schemes are asymptotic-preserving and positivity-preserving and satisfy discrete energy stability properties. For the Richardson extrapolated schemes, we additionally establish a uniform-in-time energy bound in the limiting case of Kerr nonlinearlity. The construction and analysis of these high-order extrapolated methods constitute some of the main contributions of this work.

The outline of this paper is as follows. In Section~\ref{sec:model}, we introduce the one-dimensional Maxwell-Kerr-Debye-Lorentz model and summarize its key continuous properties, including positivity of the nonlinear susceptibility and energy dissipation. We also present the limiting Maxwell-Kerr-Lorentz model obtained as $\varepsilon\to0$. In Section~\ref{subsec1.2}, we construct structure-preserving $(2,2M)$ FDTD schemes using staggered $2M$th-order spatial discretizations together with explicit leapfrog and implicit trapezoidal time discretizations. We establish asymptotic-preserving and positivity-preserving properties and derive discrete energy relations for these schemes. In Section~\ref{2M2M}, we construct $(2M,2M)$ extrapolated trapezoidal FDTD schemes for the limiting Maxwell-Kerr-Lorentz model using Richardson extrapolation and establish a uniform-in-time energy bound for the extrapolated solution. In Section~\ref{2M2MKDL}, we develop a modified Richardson extrapolation for the Maxwell-Kerr-Debye-Lorentz model that yields $(2M,2M)$ accuracy. Section~\ref{sec:implementation} describes the numerical implementation of the proposed methods. Numerical experiments demonstrating the theoretical convergence rates, energy behavior, and nonlinear wave propagation phenomena are presented in Section~\ref{sec:numerics}. Finally, conclusions and directions for future work are given in Section~\ref{sec:conclusion}.

\section{The Maxwell Kerr-Debye Model with Lorentz Dispersion}\label{sec:model}
Let $\Omega \subset \mathbb{R}^3$ and $T>0$. The model for light-matter interactions is given by Maxwell’s equations along with constitutive relationships describing the material response. Assuming no free charges or sources, Maxwell's equations are written as
\begin{subequations}\label{eq: mxk}
\begin{align}
\frac{\partial \mathbf{B}}{\partial t} &= -\nabla \times \mathbf{E}, \, \hspace{18mm} \text{ in } \Omega \times (0, T)\\
\frac{\partial \mathbf{D}}{\partial t}  &= \nabla \times \mathbf{H}, \, \hspace{20.5mm}  \text{ in } \Omega \times (0, T)\\
\nabla \cdot \mathbf{D} &=  \nabla \cdot \mathbf{B} = 0, \hspace{16mm} \text{ in } \Omega \times (0, T)\\
\mathbf{E}(\mathbf{x},0) &= \mathbf{E}_0 (\mathbf{x}); \,\,  \mathbf{H}(\mathbf{x},0) = \mathbf{H}_0(\mathbf{x}) \quad \text{ in }\Omega. \label{InitC}
\end{align}
\end{subequations}
The fields $\mathbf{D}, \mathbf{E}, \mathbf{B}$, and $\mathbf{H}$ denote the electric displacement, electric field, magnetic induction, and magnetic field intensity, respectively. All fields are functions of space and time. We supplement the system with the initial conditions \eqref{InitC}. Unless otherwise stated, we assume periodic boundary conditions throughout the analysis. In some of the numerical experiments presented in Section~\ref{KDLi}, other boundary conditions are employed as appropriate and are specified where they are introduced. 

Constitutive laws describe relationships between the different electromagnetic variables and depend on the material response. For a nonlinear optical material. these are generally given as \cite{jackson1977classical}: 
\begin{subequations}
    \begin{align}
        \mathbf{D} &= \epsilon_0 (\mathbf{E} + \mathbf{P}),\label{constlawfirst} \\
        \mathbf{B} &= \mu_0\mathbf{H},
    \end{align}
\end{subequations}
where $\mathbf{P}$ is the macroscopic polarization, $\epsilon_0 > 0$ is the permittivity of free space, and $\mu_0 > 0$ is the permeability of free space. The medium is nonmagnetic, i.e., the constitutive relationship for the magnetic field is the same as in a vacuum. 

The polarization incorporates both linear and nonlinear optical responses. We write 
\begin{align*}
    \mathbf{P}= \mathbf{P^L} + \mathbf{P^{NL}},
\end{align*}
where $\mathbf{P^{NL}}$ is the nonlinear polarization and $\mathbf{P^{L}}$ describes the linear polarization effects. In this paper, we consider media that have physical dispersion. Dispersion refers to the frequency-dependence of the speed of propagation of waves through a medium.  As a result, when a pulse of light contains a mix of frequencies, those components spread out as the pulse travels even in a homogeneous medium \cite{banks2000electromagnetic,BokilGibsonXXXX}. This spreading effect can change the shape of the pulse over time. In the next sections, we describe the specific linear and nonlinear optical responses that we will consider in this paper. 

\subsection{Linear response}
 We consider both instantaneous and delayed linear responses. For the delayed linear response we assume electronic polarization that is modeled by the Lorentz model \cite{banks2000electromagnetic,BokilGibsonXXXX}. The linear part of the polarization is given as
 \begin{equation}\label{linearpol}
   \mathbf{P^L} = \epsilon_\infty \mathbf{E} + \sum_{k=1}^{N_\omega}\mathbf{P}_k,
 \end{equation}
 where $\epsilon_\infty$ is the relative permittivity at infinite frequency of the material and the term $\epsilon_\infty \mathbf{E}$ captures the linear instantaneous response of the material. For $k=1,\dots,N_\omega$, the $k$-th Lorentz pole $\mathbf{P}_k$ is driven by the electric field and is modeled as a harmonic oscillator
    \begin{equation}
    \frac{\partial^2 \mathbf{P}_k}{\partial t^2}  + \frac{1}{\tau_k} \frac{\partial \mathbf{P}_k}{\partial t} + \omega_{0,k}^2\,\mathbf P_k \;=\; \,\omega_{p,k}^2\,\mathbf E,
    \qquad k=1,\dots,N_\omega,
    \label{eq:lorentz-ode}
    \end{equation}
     where $\tau_k, \omega_{0,k}$ and $\omega_{p,k}$ are the damping time, resonance frequency and plasma frequency, respectively, of the $k$th Lorentz pole.  
    The second-order differential equations are converted to first order form by introducing new variables $\mathbf{J}_k$ representing the $k$th polarization current density.  
    \begin{align}
    \partial_{t} \mathbf{P}_k&=\mathbf J_k, \label{lorsrc}\\
   \partial_{t} \mathbf{J}_k + \frac{1}{\tau_k} \mathbf J_k &+ \omega_{0,k}^2 \mathbf P_k = \,\omega_{p,k}^2\,\mathbf E \label{lorp}.
    \end{align}

\subsection{Nonlinear response}
The standard treatment of the optical Kerr effect \cite{weinberger2008john} assumes an instantaneous nonlinear response which means that the refractive index at any time depends only on the instantaneous intensity of the electromagnetic field, which is appropriate when field variations are slow compared to polarization dynamics \cite{BokilGibsonXXXX}. This assumption breaks down for ultrashort optical pulses on the order of femtoseconds or for materials with nonnegligible response times. In these cases, a finite relaxation process must be included to represent the material behavior accurately. To do so, we use the nonlinear Kerr-Debye model \cite{PengJCP2020}, which introduces relaxation into the nonlinear polarization through an auxiliary differential equation. This model accounts for both the intensity dependent nonlinearity and the finite time memory of the medium, yielding a more realistic description when the nonlinear susceptibility evolves in time. Consequently, the nonlinear refractive index does not respond instantaneously to changes in field intensity but instead evolves according to a characteristic relaxation time $\varepsilon$. 

Let $a>0$ be a coefficient of nonlinearity. We model $\mathbf{P^{NL}}$ as 
\begin{align}\label{NLPol}
	\mathbf{P^{NL}} = a\chi \mathbf{E}, 
\end{align}
where $\chi(\mathbf{x}, t)$ is the third-order nonlinear susceptibility. Choosing $\chi = |\mathbf{E}|^2$, \eqref{constlawfirst} becomes 
\begin{equation}
\label{eq:DKL}
\mathbf{D} = \mathbf{D}_{\mathrm{KL}} := \epsilon_0 \left(\epsilon_\infty + a|\mathbf{E}|^2\right)\mathbf{E}+ \epsilon_0\sum_{k=1}^{N_\omega}\mathbf{P}_k,
\end{equation}
incorporating both the instantaneous Kerr nonlinearity and the linear Lorentz dispersion. To capture the finite relaxation time in the nonlinear polarization response \eqref{NLPol}, we employ the nonlinear Kerr-Debye model which is given by 
\begin{align}\label{KerrDeb}
	\partial_t \chi = - \frac{1}{\varepsilon}\left(\chi - |\mathbf{E}|^2\right),
\end{align}
and \eqref{constlawfirst} becomes 
\begin{equation}\label{constlawKD}
\mathbf{D} = \mathbf{D}_{\mathrm{KDL}}:=\epsilon_0 \left(\epsilon_\infty + a\chi\right)\mathbf{E}+ \epsilon_0\sum_{k=1}^{N_\omega}\mathbf{P}_k,
\end{equation}
where $\varepsilon$ is the relaxation time of the nonlinear Kerr effect. 

In the next section, we present the complete nonlinear electromagnetic model. If the context is clear, we will drop the subscript on the electric displacement $\mathbf{D}$ in the constitutive laws 
\eqref{eq:DKL} and \eqref{constlawKD}.

\subsection{1D Maxwell-Kerr-Debye-Lorentz System}
We will consider two models in the sequel. The combination of the Maxwell system \eqref{eq: mxk} with \eqref{lorsrc}, \eqref{lorp} \eqref{KerrDeb} and \eqref{constlawKD} will be denoted the \textit{Maxwell Kerr-Debye-Lorentz} model. In this paper, we consider the one-dimensional version of the Maxwell-Kerr-Debye-Lorentz system by assuming an isotropic and homogeneous material in which electromagnetic plane waves are linearly polarized and propagate in the $x$ direction. Thus, the electric field is represented by one scalar component $E := E_z$, while the magnetic field is represented by one component $H := H_y$. All other variables are represented similarly by single scalar components. 

Let $\Omega \subset \mathbb{R}$. We consider a single Lorentz pole with $N_\omega = 1$. The one-dimensional Maxwell-Kerr-Debye-Lorentz model on $\Omega \times (0, T]$ is 
\begin{subequations}\label{eq:kerr-debyelorentz1da}
    \begin{align}
        \mu_0 \partial_t H &= \partial_x E, \label{faraday} \\
    \partial_t D &= \partial_x H, \label{ampere} \\
    \partial_t \chi &= -\frac{1}{\varepsilon} (\chi - E^2), \label{kerrdebye1d}\\
    \partial_t P &= J, \label{density} \\
    \partial_t J &+ \frac{1}{\tau} J + \omega_0^2 P = \omega_p^2 E, \label{lorentz}\\
     D &= \epsilon_0 \left( (\epsilon_\infty + a \chi) E + P \right). \label{constlaw}
    \end{align}
\end{subequations}
Formally, as the relaxation time $\varepsilon$ goes to $0$, \eqref{kerrdebye1d} becomes $\chi = E^2$, and we obtain the asymptotic limit of the system \eqref{eq:kerr-debyelorentz1da}, denoted as the {\it Maxwell-Kerr-Lorentz} model \cite{bokil2018high}:
\begin{subequations}\label{eq:kerr-debyelorentz1db}
    \begin{align}
        \mu_0 \partial_t H &= \partial_x E, \label{faradaya}\\
        \partial_t D &= \partial_x H, \label{amperea} \\
        \partial_t P &= J, \label{densitya} \\
        \partial_t J &+ \frac{1}{\tau} J + \omega_0^2 P = \omega_p^2 E, \label{lorentza}\\
        D &= \epsilon_0 \left( (\epsilon_\infty + a E^2) E + P \right). \label{constlawa}
    \end{align}
\end{subequations}

We add initial conditions to each of the models and assume periodic boundary conditions on the boundary of $\Omega$. We now state some known results for the continuous model without proof. We refer the reader to \cite{PengJCP2020} for a detailed proof of each.

\begin{theorem}[positivity-preserving property of the susceptibility $\chi$, \cite{PengJCP2020}]\label{ctchi}
For the system \eqref{eq:kerr-debyelorentz1da}, suppose $\chi(x,0)\ge0.$ Then for all $t\ge0$, $ \chi(x,t)\ge0.$
\end{theorem}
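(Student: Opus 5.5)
The plan is to treat \eqref{kerrdebye1d}, at each fixed spatial point $x$, as a scalar linear ODE in $t$ for $\chi(x,\cdot)$, with forcing term $E^2/\varepsilon$. This works because the equation contains no spatial derivatives. The electric field $E(x,t)$ is determined by the full coupled system, but for the positivity argument we only need it as a given function of time along the trajectory. We therefore assume a sufficiently regular solution exists on $[0,T]$, so that $E(x,\cdot)$ is continuous, or at least locally integrable, in $t$.

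The key step is the variation-of-constants (Duhamel) formula. Multiplying \eqref{kerrdebye1d} by the integrating factor $e^{t/\varepsilon}$ gives
\begin{equation*}
\partial_t\left(e^{t/\varepsilon}\chi(x,t)\right) = \frac{1}{\varepsilon}e^{t/\varepsilon}E(x,t)^2 .
\end{equation*}
Integrating from $0$ to $t$ yields
\begin{equation*}
\chi(x,t) = e^{-t/\varepsilon}\chi(x,0) + \frac{1}{\varepsilon}\int_0^t e^{-(t-s)/\varepsilon}E(x,s)^2\,ds .
\end{equation*}
Both terms on the right are nonnegative. The first is nonnegative because $\chi(x,0)\ge 0$ and the exponential is positive. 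The second is nonnegative because it is the integral of a nonnegative integrand, since $E^2\ge 0$ and $\varepsilon>0$. Hence $\chi(x,t)\ge 0$ for all $t\ge0$ and all $x$.

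The only delicate point is the justification that $E$ is regular enough for this formula. The coupling through \eqref{constlaw} is nonlinear, and the argument implicitly uses that $\epsilon_\infty + a\chi$ stays positive, which is what makes $E$ recoverable from $D$. This is not circular. We work on the maximal interval of existence of a classical solution, and on that interval the Duhamel formula holds and gives $\chi\ge0$. The bound $\chi\ge0$ in turn keeps $\epsilon_\infty + a\chi \ge \epsilon_\infty>0$, so positivity never degrades the solvability of \eqref{constlaw}.
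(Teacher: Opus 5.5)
Your argument is correct and is essentially the standard one. The paper states this result without proof and defers to \cite{PengJCP2020}, where positivity follows directly from the variation-of-constants representation $\chi(x,t)=e^{-t/\varepsilon}\chi(x,0)+\frac{1}{\varepsilon}\int_0^t e^{-(t-s)/\varepsilon}E(x,s)^2\,ds$; the paper itself uses this same exact-solution formula in Section~\ref{sec:modified-exponential} to build the modified exponential integrator. One small correction to your last paragraph: the positivity step never uses $\epsilon_\infty+a\chi>0$, only that $E(x,\cdot)^2$ is locally integrable along the given solution, so that remark concerns existence of the solution rather than the positivity argument.
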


\begin{theorem}[Energy decay for Maxwell-Kerr-Debye-Lorentz system, \cite{PengJCP2020}]\label{ctenergy}
Suppose $\chi(x,0) \geq 0 $. Assume 
periodic boundary conditions on \(\partial\Omega\).  The energy for the Maxwell Kerr-Debye- Lorentz model \eqref{eq:kerr-debyelorentz1da}, defined as
\begin{equation}
   \mathcal{E}(t) =  \frac{1}{2} \int_\Omega \left(\mu_0 H^2 + \epsilon_0 \epsilon_\infty E^2 + a\epsilon_0 \chi E^2 + \frac{a\epsilon_0}{2} \chi^2 + \frac{\epsilon_0 \omega_0^2}{\omega_p^2}P^2 + \frac{\epsilon_0}{\omega_p^2} J^2\right) \, \rm dx, 
\end{equation}
satisfies the following energy identity
\begin{equation}\label{discreteenergy}
     \frac{\rm d \mathcal{E}(t)}{\rm d t} = - \int_\Omega \left(\frac{a\epsilon_0}{2\varepsilon} \left(\chi-E^2 \right)^2 + \frac{\epsilon_0}{\tau \omega^2_p} J^2 \right)\, \rm dx.
\end{equation}
\end{theorem}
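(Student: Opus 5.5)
The plan is a direct energy computation. We differentiate $\mathcal{E}(t)$ under the integral sign, substitute the evolution equations \eqref{faraday}--\eqref{constlaw}, and integrate by parts using periodicity. Let $Q = (\epsilon_\infty + a\chi)E$. By \eqref{constlaw}, $\partial_t D = \epsilon_0(\partial_t Q + J)$, where we used \eqref{density}. We test Amp\`ere's law \eqref{ampere} against $E$ and Faraday's law \eqref{faraday} against $H$. Adding the two, the spatial terms combine into $\int_\Omega (H\partial_x E + E\partial_x H)\,dx = \int_\Omega \partial_x(EH)\,dx = 0$ by periodic boundary conditions. This yields
\begin{equation*}
\int_\Omega \mu_0 H\,\partial_t H \,dx + \int_\Omega \epsilon_0 E\,\partial_t Q\,dx + \int_\Omega \epsilon_0 E J\,dx = 0.
\end{equation*}

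The key algebraic step is to rewrite $E\,\partial_t Q$ as a time derivative of the electric and nonlinear energy densities, plus a remainder that the Kerr-Debye equation controls. We have
\begin{equation*}
E\partial_t Q = \epsilon_\infty E\partial_t E + a\chi E\partial_t E + aE^2\partial_t\chi = \tfrac12\partial_t\bigl(\epsilon_\infty E^2 + a\chi E^2\bigr) + \tfrac{a}{2}E^2\partial_t\chi .
\end{equation*}
Next we write $\frac{a}{2}E^2\partial_t\chi = \frac{a}{2}\chi\partial_t\chi - \frac{a}{2}(\chi - E^2)\partial_t\chi$. The first piece equals $\frac{a}{4}\partial_t\chi^2$, which is the source of the $\frac{a\epsilon_0}{2}\chi^2$ term in $\mathcal{E}$. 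For the second piece we use \eqref{kerrdebye1d}, which gives $-\frac{a}{2}(\chi-E^2)\partial_t\chi = \frac{a}{2\varepsilon}(\chi-E^2)^2$. This term sits on the left side of the balance, so moving it across produces the dissipation $-\frac{a\epsilon_0}{2\varepsilon}(\chi-E^2)^2$.

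For the Lorentz part, we multiply \eqref{lorentz} by $\frac{\epsilon_0}{\omega_p^2}J$ and use $\partial_t P = J$. This gives
\begin{equation*}
\frac{\epsilon_0}{\omega_p^2}J\partial_t J + \frac{\epsilon_0\omega_0^2}{\omega_p^2}P\partial_t P + \frac{\epsilon_0}{\tau\omega_p^2}J^2 = \epsilon_0 E J .
\end{equation*}
Substituting this for $\epsilon_0 EJ$ in the first balance cancels the coupling term. It also produces $\frac{d}{dt}$ of the $P^2$ and $J^2$ energies, together with the damping $-\frac{\epsilon_0}{\tau\omega_p^2}J^2$. Collecting all terms gives exactly \eqref{discreteenergy}.

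The step requiring care is the splitting of $\frac{a}{2}E^2\partial_t\chi$. It is what forces the nonstandard $\chi^2$ term into the energy and converts the relaxation equation into a sign-definite dissipation. Theorem~\ref{ctchi} ($\chi\ge0$) is not needed for the identity itself. It is needed only to ensure that $\mathcal{E}(t)$ is nonnegative, so that the identity expresses genuine energy decay. Smoothness of solutions is assumed so that differentiation under the integral and integration by parts are justified.
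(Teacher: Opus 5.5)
Your computation is correct and is the standard energy argument. The paper states this result without proof (deferring to \cite{PengJCP2020}), but its proofs of the discrete energy identities for the $(2,2M)$ leapfrog and trapezoidal schemes follow exactly your structure: Faraday tested with $H$, Amp\`ere with $E$, and the Lorentz equation with $J/\omega_p^2$. The energy-based constitutive law \eqref{eq:energybasedconstlaw} is the discrete counterpart of your splitting of $E\,\partial_t Q$ into $\tfrac12\partial_t(\epsilon_\infty E^2 + a\chi E^2) + \tfrac{a}{4}\partial_t\chi^2 + \tfrac{a}{2\varepsilon}(\chi-E^2)^2$. Your remark that $\chi\ge 0$ is needed only for nonnegativity of $\mathcal{E}$, not for the identity itself, is also right.
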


\begin{theorem}[Energy decay for Maxwell-Kerr-Lorentz system, \cite{bokil2018high,PengJCP2020}]\label{ctenergylorentz}  Assume 
periodic boundary conditions on \(\partial\Omega\). the energy for the Maxwell Kerr-Lorentz model \eqref{eq:kerr-debyelorentz1db}, defined as 
\begin{equation}
\mathcal{E}_{\mathrm{Kerr}}(t) = \frac{1}{2} \int_{\Omega} \left( 
    \epsilon_0 \epsilon_{\infty} E^2
    + \frac{3\epsilon_0 a}{2} E^4
    + \mu_0 H^2
    + \frac{\epsilon_0 \omega_0^2}{\omega_p^2} P^2
    + \frac{\epsilon_0}{\omega_p^2} J^2
\right) \, \rm dx,
\end{equation}
satisfies the following energy identity
\begin{equation}
\frac{d \mathcal{E}_{\mathrm{Kerr}}(t)}{dt}
= - \frac{\epsilon_0}{\tau \omega_p^2} \int_{\Omega} J^2 \, \rm dx.
\end{equation}
\end{theorem}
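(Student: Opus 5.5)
The plan is a direct energy computation. Differentiate $\mathcal{E}_{\mathrm{Kerr}}(t)$ under the integral sign, substitute the evolution equations of \eqref{eq:kerr-debyelorentz1db}, and use periodicity to remove the flux term. The one subtle point is the nonlinear constitutive law \eqref{constlawa}. It does not give $\partial_t E$ directly, but it does give $\partial_t D$ in terms of $\partial_t E$ and $J$:
\begin{equation*}
\partial_t D = \epsilon_0\left(\epsilon_\infty + 3aE^2\right)\partial_t E + \epsilon_0 J .
\end{equation*}
This explains the coefficient $\tfrac{3a\epsilon_0}{2}E^4$ in the energy, since
\begin{equation*}
\partial_t\Big(\tfrac12\epsilon_0\epsilon_\infty E^2 + \tfrac{3\epsilon_0 a}{4}E^4\Big) = \epsilon_0(\epsilon_\infty + 3aE^2)E\,\partial_t E .
\end{equation*}

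\textbf{Step 1 (electric part).} Multiply the expression for $\partial_t D$ by $E$ and use \eqref{amperea}. This gives
\begin{equation*}
\frac{d}{dt}\frac12\int_\Omega\Big(\epsilon_0\epsilon_\infty E^2 + \tfrac{3\epsilon_0 a}{2}E^4\Big)\,dx = \int_\Omega \big(E\,\partial_x H - \epsilon_0 E J\big)\,dx .
\end{equation*}

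\textbf{Step 2 (magnetic part).} By \eqref{faradaya},
\begin{equation*}
\frac{d}{dt}\frac12\int_\Omega \mu_0 H^2\,dx = \int_\Omega H\,\partial_x E\,dx .
\end{equation*}
Adding this to Step 1 produces $\int_\Omega \partial_x(EH)\,dx$, which vanishes under periodic boundary conditions.

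\textbf{Step 3 (Lorentz part).} Using \eqref{densitya} and \eqref{lorentza},
\begin{equation*}
\frac{d}{dt}\frac12\int_\Omega\Big(\frac{\epsilon_0\omega_0^2}{\omega_p^2}P^2 + \frac{\epsilon_0}{\omega_p^2}J^2\Big)\,dx
= \frac{\epsilon_0}{\omega_p^2}\int_\Omega \Big(\omega_0^2 P J + J\big(\omega_p^2 E - \omega_0^2 P - \tfrac1\tau J\big)\Big)\,dx .
\end{equation*}
The $PJ$ terms cancel, leaving
\begin{equation*}
\int_\Omega \Big(\epsilon_0 E J - \frac{\epsilon_0}{\tau\omega_p^2}J^2\Big)\,dx .
\end{equation*}
The coupling term $\epsilon_0 EJ$ cancels the $-\epsilon_0 EJ$ from Step 1.

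\textbf{Step 4 (summing).} Adding Steps 1--3 gives exactly
\begin{equation*}
\frac{d\mathcal{E}_{\mathrm{Kerr}}}{dt} = -\frac{\epsilon_0}{\tau\omega_p^2}\int_\Omega J^2\,dx,
\end{equation*}
which is the claimed identity.

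\textbf{Main obstacle.} The only nonroutine step is Step 1: computing $\partial_t D$ from \eqref{constlawa} via the chain rule and recognizing $(\epsilon_\infty + 3aE^2)E\,\partial_t E$ as the time derivative of the stated energy density. The rest is justification: it assumes enough smoothness of the solution to differentiate under the integral and to integrate by parts.
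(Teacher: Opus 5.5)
Your proof is correct. The chain-rule expansion $\partial_t D=\epsilon_0(\epsilon_\infty+3aE^2)\partial_t E+\epsilon_0 J$, the cancellation of the $\pm\epsilon_0 EJ$ terms, and the vanishing of $\int_\Omega\partial_x(EH)\,dx$ under periodicity are exactly what is needed. The paper gives no proof of this result and defers to \cite{bokil2018high,PengJCP2020}. Your energy-method argument is the standard one, and it is the continuous counterpart of the paper's discrete energy proofs, which test Faraday's law with $H$, Amp\`ere's law with $E$, and the Lorentz equations with $P$ and $J/\omega_p^2$.
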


\begin{remark}
Formally, as $\varepsilon \to 0$, $\chi \to E^2$ and 
$\mathcal{E} \to \mathcal{E}_{\mathrm{Kerr}}$.
\end{remark}

\section{Construction of Second Order Temporal Approximations}
In this section, we present the construction of two families of fully discrete schemes for the 1D Maxwell-Kerr-Debye-Lorentz system and its asymptotic limit, the Maxwell-Kerr-Lorentz model, presented in \eqref{eq:kerr-debyelorentz1da}, and \eqref{eq:kerr-debyelorentz1db}, respectively. Both families of schemes have high even order accuracy in space and are second order accurate in time. We refer to these schemes as having $(2,2M)$ accuracy. The first family uses staggered leapfrog discretization in time while the second family is based on a non-staggered trapezoidal time integrator. We first address the construction of $2M, M \in \mathbb{N}, M>0$ order accurate spatial discretizations. 
\subsection{Construction of High-Order Spatial Approximations}\label{subsec1.2}
In this section, we present the construction of high-order approximations of the first-derivative operator $\partial/\partial x$. Our development follows the framework in \cite{cohen2002higher, bokil2012analysis}. We introduce staggered $\ell^2$ spaces that will be used to state and verify the basic properties of these high-order derivative approximations.

Let $L>0$, $L \in \mathbb{R}$. Consider smooth functions $v$ and $u$ on $\mathbb{R}$ satisfying periodic boundary conditions, i.e., $v(x)= v(x + L)$ and $u(x)= u(x + L)$ for all $x \in \mathbb{R}$. Due to this periodic structure, it suffices to work on the computational domain $\Omega = [0,L]$. Let $\Delta x = h = L/J > 0$ for some positive integer $J$ denote a uniform mesh spacing. Define the grid points $x_j = jh$, $0 \le j \le J$, and the staggered points $x_{j+\frac12} = \left(j+\frac12\right)h$, $0 \le j \le J-1$. We then introduce two staggered grids on $\Omega$: a primal grid and a dual grid. The primal grid, $G_p$, is given by
\begin{equation}
G_p:\; x_0 < x_1 < x_2 < \cdots < x_{J-1} < x_J .
\end{equation}
The dual grid, $G_d$, on $[0,L]$ is defined as
\begin{equation}
G_d:\; x_{\frac12} < x_{\frac32} < x_{\frac52} < \cdots < x_{J-\frac12} .
\end{equation}
Let $v_\ell \approx v(x_\ell)$ and $u_{\ell+\frac12} \approx u(x_{\ell+\frac12})$ be degrees of freedom on the primal, and dual grids, respectively. On the primal grid $G_p$, we define the $\ell^2$ space of discrete grid functions $v_h := (v_\ell),\ 0 \le \ell \le J-1$ given by 
\begin{equation}
V_{0,h} := \left\{ v_h := (v_\ell),\ 0 \le \ell \le J-1,\ v_0 = v_J \ \Big|\  h\sum_{\ell=0}^{J-1} |v_{\ell}|^2 < \infty \right\},
\end{equation}
and on the dual grid $G_d$ we define the staggered $\ell^2$ space of discrete grid functions $u_h := (u_{\ell+\frac12}),\ 0 \le \ell \le J-1$ given by
\begin{equation}
V_{\frac12,h} := \left\{ u_h := (u_{\ell+\frac12}),\ 0 \le \ell \le J-1,\ 
u_{\frac12} = u_{J+\frac12} \ \Big| \  
h\sum_{\ell=0}^{J-1} |u_{\ell+\frac12}|^2 < \infty \right\}.
\end{equation}
These spaces are equipped with the discrete $\ell^2$ inner products
\begin{equation}
\langle v_h,w_h\rangle_{0,h}
:=
h\sum_{\ell=0}^{J-1} v_\ell w_\ell,
\qquad v_h,w_h \in V_{0,h},
\end{equation}
and
\begin{equation}
\langle u_h,z_h\rangle_{\frac12,h}
:=
h\sum_{\ell=0}^{J-1} u_{\ell+\frac12} z_{\ell+\frac12},
\qquad u_h,z_h \in V_{\frac12,h}.
\end{equation}
The corresponding discrete norms are then defined by
\begin{equation}
    \|v\|_{0,h} = \left( h\sum_{\ell=0}^{J-1} |v_\ell|^2 \right)^{1/2}, \quad \|u\|_{\frac{1}{2},h} = \left( h\sum_{\ell=0}^{J-1} |u_{\ell+\frac12}|^2 \right)^{1/2}.
\end{equation}
Operations on grid functions in the staggered $\ell^2$ spaces are typically performed pointwise at individual degrees of freedom. 
Next, for $p \in \mathbb{N}$, we define the discrete operators
\begin{equation}
    \mathcal{D}_{p,h}^{(2)} : V_{0,h} \rightarrow V_{\frac{1}{2},h}, \quad \left(\mathcal{D}_{p,h}^{(2)} u\right)_{\ell+\frac{1}{2}} = \frac{u_{\ell+p} - u_{\ell - p + 1}}{(2p - 1)h},
\end{equation}
\begin{equation}
   \widetilde{\mathcal{D}}_{p,h}^{(2)} : V_{\frac{1}{2},h} \rightarrow V_{0,h}, \quad \left(\widetilde{\mathcal{D}}_{p,h}^{(2)} u\right)_\ell = \frac{u_{\ell+p - \frac{1}{2}} - u_{\ell -p + \frac{1}{2}}}{(2p - 1)h}. 
\end{equation}
These are second-order discrete approximations to $\partial / \partial x$ with step-size $(2p - 1)h$. 
see \citep[p. 37]{cohen2002higher}. 
Similar to \cite{anne2000construction,cohen2002higher}, we construct finite difference approximations of order $2M$, for $M \in \mathbb{N}$, to the first derivative operator $\partial/\partial x$. Consider the operators
\begin{equation}\label{eq: 15}
    \mathcal{D}_{h}^{(2M)} : V_{0,h} \rightarrow V_{\frac{1}{2},h}; \qquad \widetilde{\mathcal{D}}_{h}^{(2M)} : V_{\frac{1}{2},h} \rightarrow V_{0,h},
\end{equation}
defined as linear combinations of second-order approximations of $\partial / \partial x$ computed with different space steps in the form
\begin{equation}\label{eq: 16}
    \mathcal{D}_{h}^{(2M)} = \sum_{p=1}^{M} \lambda_{2p-1}^{2M} \mathcal{D}_{p,h}^{(2)}, \qquad \widetilde{\mathcal{D}}_{h}^{(2M)} = \sum_{p=1}^{M} \lambda_{2p-1}^{2M} \widetilde{\mathcal{D}}_{p,h}^{(2)}.
\end{equation}

\begin{theorem}
For any $M \in \mathbb{N}$, the coefficients $\lambda_{2p-1}^{2M}$ of the linear combinations \eqref{eq: 16} are given by the explicit formula
\begin{equation}
    \lambda_{2p-1}^{2M} = \frac{2(-1)^{p-1}((2M - 1)!!)^2}{(2M + 2p - 2)!!(2M - 2p)!!(2p - 1)},
\end{equation}
where $1 \leq p \leq M$, and the double factorial is defined as
\begin{equation}
    n!! =
\begin{cases}
n \cdot (n - 2) \cdot (n - 4) \ldots 5 \cdot 3 \cdot 1, & n > 0, \text{ odd} \\
n \cdot (n - 2) \cdot (n - 4) \ldots 6 \cdot 4 \cdot 2, & n > 0, \text{ even} \\
1, & n = -1, 0.
\end{cases}
\end{equation}

\end{theorem}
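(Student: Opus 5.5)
Taylor expansion reduces the statement to a Vandermonde-type linear system for the $\lambda$'s, and the formula is checked by solving that system in closed form. Take a smooth $u$ and expand the second-order operator about $x_{\ell+\frac12}$. With $s=(2p-1)h/2$,
\begin{equation*}
\frac{u(x_{\ell+\frac12}+s)-u(x_{\ell+\frac12}-s)}{2s} = \sum_{k\ge 0} \frac{s^{2k}}{(2k+1)!}\, u^{(2k+1)}(x_{\ell+\frac12}).
\end{equation*}
The combination $\sum_{p}\lambda_{2p-1}^{2M}\mathcal{D}^{(2)}_{p,h}$ is therefore of order $2M$ if and only if
\begin{equation*}
\sum_{p=1}^M \lambda_{2p-1}^{2M}=1, \qquad \sum_{p=1}^M \lambda_{2p-1}^{2M}(2p-1)^{2k}=0, \quad k=1,\dots,M-1.
\end{equation*}
The same system arises for $\widetilde{\mathcal{D}}_h^{(2M)}$, since it has the identical stencil structure. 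The nodes $(2p-1)^2$ are distinct, so this Vandermonde system has a unique solution. It then suffices to show that the proposed $\lambda$'s satisfy it.

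To solve the system, write $\mu_p=(2p-1)^2$. The conditions say that $\sum_p\lambda_p q(\mu_p)=q(0)$ for every polynomial $q$ of degree at most $M-1$. Equivalently, $\lambda_p$ are the Lagrange interpolation weights evaluating at $0$ from the nodes $\mu_1,\dots,\mu_M$:
\begin{equation*}
\lambda_{2p-1}^{2M}=\prod_{q\neq p}\frac{0-\mu_q}{\mu_p-\mu_q}=\prod_{q\ne p}\frac{(2q-1)^2}{(2q-1)^2-(2p-1)^2}.
\end{equation*}

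Next, simplify this product. The numerator is $\big((2M-1)!!\big)^2/(2p-1)^2$. In the denominator, factor $(2q-1)^2-(2p-1)^2=4(q-p)(q+p-1)$. The product over $q\neq p$ is
\begin{equation*}
4^{M-1}\prod_{q\ne p}(q-p)\prod_{q\ne p}(q+p-1) = 4^{M-1}(-1)^{p-1}(p-1)!\,(M-p)!\cdot\frac{(M+p-1)!}{(p-1)!\,(2p-1)}.
\end{equation*}
For the second product, $q+p-1$ runs over $p,\dots,M+p-1$ with the value $2p-1$ (at $q=p$) omitted. The $(p-1)!$ factors cancel, leaving $4^{M-1}(-1)^{p-1}(M-p)!\,(M+p-1)!/(2p-1)$.

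Finally, convert to double factorials using $(2n)!!=2^n n!$. This gives $(2M-2p)!!=2^{M-p}(M-p)!$ and $(2M+2p-2)!!=2^{M+p-1}(M+p-1)!$, whose product is $2^{2M-1}(M-p)!(M+p-1)!=2\cdot 4^{M-1}(M-p)!(M+p-1)!$. Substituting yields
\begin{equation*}
\lambda_{2p-1}^{2M}=\frac{((2M-1)!!)^2}{(2p-1)^2}\cdot\frac{(-1)^{p-1}(2p-1)\cdot 2}{(2M-2p)!!(2M+2p-2)!!},
\end{equation*}
which is exactly the stated formula. The delicate point is the bookkeeping in the product $\prod_{q\ne p}(q+p-1)$ and the powers of $2$. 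I would sanity-check the result at $M=1$, where $\lambda_1=1$, and $M=2$, where $\lambda_1=9/8$ and $\lambda_3=-1/8$.
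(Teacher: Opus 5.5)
Your proof is correct. The Taylor moment conditions $\sum_p \lambda_{2p-1}^{2M}(2p-1)^{2k}=\delta_{k0}$, $0\le k\le M-1$, characterize order $2M$. The Lagrange weights at $z=0$ for the nodes $\mu_p=(2p-1)^2$ solve them. The bookkeeping also checks out and reproduces the stated formula: the factorization $4(q-p)(q+p-1)$, the sign $(-1)^{p-1}$, the factor $2p-1$ omitted at $q=p$, and $(2n)!!=2^n n!$, including $0!!=1$ at $p=M$.

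The paper gives no argument of its own here; it only cites Cohen and Bokil--Gibson. Your derivation is a self-contained replacement for that citation. Your mechanism is exactly the one the paper later uses in Section~\ref{2M2M} for the Richardson weights. Indeed, $\lambda_{2p-1}^{2M}=\prod_{q\neq p}\mu_q/(\mu_q-\mu_p)$ is formula \eqref{alphas} with $s_k$ replaced by $(2p-1)^2$. In other words, the staggered $2M$th-order stencil is a Richardson extrapolation in the stencil width $(2p-1)h$ of second-order differences whose error expansions contain only even powers.

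What your route does not give directly is the Fourier-symbol form of the operator, which is what the positivity argument behind Theorem~\ref{positiveA} relies on. It follows from your moment conditions in one more step. The function $S(\theta)=\sum_{p=1}^{M}\frac{\lambda_{2p-1}^{2M}}{2p-1}\sin((2p-1)\theta)$ is an odd polynomial of degree $2M-1$ in $\sin\theta$, and it satisfies $S(\theta)=\theta+\mathcal{O}(\theta^{2M+1})$. Hence it equals the degree-$(2M-1)$ truncation of the arcsine series evaluated at $\sin\theta$, namely $\sum_{\ell=1}^{M}\frac{[(2\ell-3)!!]^2}{(2\ell-1)!}\sin^{2\ell-1}\theta$. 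Its value at $\theta=\pi/2$ is exactly the CFL constant appearing there.
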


\noindent\textit{Proof.} See \cite{bokil2012analysis,cohen2002higher}. 

\subsection{Modified Exponential Time Integrator}
\label{sec:modified-exponential}
In order to construct fully discrete FDTD schemes that are structure preserving; preserve energy stability, the positivity-preserving (PP) property and the asymptotic-preserving (AP) property, i.e., maintaining the asymptotic limit in the discrete scheme as $\epsilon \rightarrow 0$, we consider a modified exponential time discretization of
\eqref{kerrdebye1d} introduced in \cite{PengJCP2020} and inspired by \cite{crestetto2018particle}. As discussed in \cite{PengJCP2020}, to derive a second order in time approximation that satisfies AP, PP and energy stability properties, we will first derive a modified ODE and then discretize it appropriately. 

Let $\Delta t > 0$ be a given time step. We discretize the time interval $[0,T]$ using time grid points $t^n = n\Delta t$, for $n\in \mathbb{N}, 0\leq n\leq N$, with $T = N\Delta t$. We also define staggered time grid points $t^{n+\frac{1}{2}} = t^n+\frac{\Delta t}{2}$.
The derivation of the discrete approximation of \eqref{kerrdebye1d} is based on the estimate
\begin{align}
\left|
\frac{\Delta t}{\varepsilon}
e^{-\frac{\Delta t}{2\varepsilon}}
-
\left(1-e^{-\frac{\Delta t}{\varepsilon}}\right)
\right|
&=
\left|
\frac{\Delta t}{\varepsilon}
e^{-\frac{\Delta t}{2\varepsilon}}
-
\int_0^{\Delta t}
\frac{1}{\varepsilon}e^{-\frac{\tau}{\varepsilon}}\,d\tau
\right| =
\mathcal O\left(\frac{\Delta t^3}{\varepsilon^3}\right),
\label{eq:exp-coefficient-relation}
\end{align}
which follows by setting $r=\Delta t/\varepsilon$ and expanding the exponential terms in the expression on the left-hand side of \eqref{eq:exp-coefficient-relation} in a Taylor series about $r=0$. As $r\to\infty$, which corresponds to $\varepsilon\ll\Delta t$, one has
\[
\frac{\Delta t}{\varepsilon}e^{-\frac{\Delta t}{2\varepsilon}}
-\left(1-e^{-\frac{\Delta t}{\varepsilon}}\right)
\longrightarrow -1,
\]
so, the difference between the terms remains bounded.

Suppressing the spatial dependence of $\chi$ and $E$ for simplicity, and using the exact solution of \eqref{kerrdebye1d} over $[t^n,t^{n+1}]$ we get 
\begin{align}
\chi(t^{n+1})
&=
e^{-\frac{\Delta t}{\varepsilon}}\chi(t^n)
+
\int_0^{\Delta t}
\frac{1}{\varepsilon}
e^{\frac{\tau-\Delta t}{\varepsilon}}
E(t^n+\tau)^2\,d\tau.
\end{align}
Using midpoint quadrature to approximate the integral, we obtain
\begin{align}
\chi(t^{n+1})
&=
e^{-\frac{\Delta t}{\varepsilon}}\chi(t^n)
+
\frac{\Delta t}{\varepsilon}
e^{-\frac{\Delta t}{2\varepsilon}}
\left(E(t^{n+\frac12})\right)^2
+
\mathcal O(\Delta t^3).
\label{eq:modified-exp-midpoint}
\end{align}
Subtracting $\chi(t^n)$ from both sides of \eqref{eq:modified-exp-midpoint}, dividing by $\Delta t$ on both sides and then expanding about $t^{n+\frac12}$, yields the modified equation
\begin{align}
\left(
\frac{1}{2}
+
\frac{1}{2}e^{-\frac{\Delta t}{\varepsilon}}
\right)
\partial_t\chi(t^{n+\frac12})
&=
\frac{e^{-\frac{\Delta t}{\varepsilon}}-1}{\Delta t}
\chi(t^{n+\frac12})+
\frac{1}{\varepsilon}
e^{-\frac{\Delta t}{2\varepsilon}}
\left(E(t^{n+\frac12})\right)^2
+
\mathcal O(\Delta t^2).
\label{eq:modified-relaxation}
\end{align}
Applying centered differencing to each term in \eqref{eq:modified-relaxation} 
and rearranging the resulting equation gives
\begin{align}
\left(
\frac{1}{2}
+
\frac{1}{2}e^{-\frac{\Delta t}{\varepsilon}}
+
\frac{1-e^{-\frac{\Delta t}{\varepsilon}}}{2}
\right)
\chi(t^{n+1})
&=
e^{-\frac{\Delta t}{\varepsilon}}\chi(t^n)+
\frac{\Delta t}{2\varepsilon}
e^{-\frac{\Delta t}{2\varepsilon}}
E(t^{n+1})^2+
\frac{\Delta t}{2\varepsilon}
e^{-\frac{\Delta t}{2\varepsilon}}
E(t^n)^2
+
\mathcal O(\Delta t^3).
\label{eq:modified-exp-before-correction}
\end{align}
Using \eqref{eq:exp-coefficient-relation} on the third term on the left hand side and the second term on the right hand side in  \eqref{eq:modified-exp-before-correction}, we obtain 
\begin{align}\label{eq:modified-exp-relation}
\left(
\frac{1}{2}
+
\frac{1}{2}e^{-\frac{\Delta t}{\varepsilon}}
+
\frac{\Delta t}{2\varepsilon}
e^{-\frac{\Delta t}{2\varepsilon}}
\right)
\chi(t^{n+1})
&=
e^{-\frac{\Delta t}{\varepsilon}}\chi(t^n)+
\frac{1}{2}
\left(
1-e^{-\frac{\Delta t}{\varepsilon}}
\right)
E(t^{n+1})^2 \\
\notag 
&\hspace{8em}+
\frac{\Delta t}{2\varepsilon}
e^{-\frac{\Delta t}{2\varepsilon}}
E(t^n)^2
+
\mathcal O\left(\frac{\Delta t^3}{\varepsilon^3}\right).
\end{align}
In the next two sections, we will use \eqref{eq:modified-exp-relation} to construct second-order time discretizations for the equation \eqref{kerrdebye1d}. This is presented in the next two sections in equations \eqref{eq:chidiscreteLF} for the leapfrog schemes and in \eqref{eq:chidiscreteb} for the trapezoidal schemes. The AP and PP properties will hold for both types of discretizations due to the positivity of the exponential functions and their rapid decay property. 

The explicit $\varepsilon$-dependence in
\eqref{eq:exp-coefficient-relation} and \eqref{eq:modified-exp-relation} shows that the second-order consistency argument is not uniform in the relaxation parameter.  For fixed $\varepsilon>0$, the remainder vanishes as $\Delta t\to0$. However, when $\varepsilon=O(\Delta t)$, $\Delta t^3/\varepsilon^3 = \mathcal O(1)$. Thus, a loss of the designed temporal accuracy may occur in the transition regime $\Delta t/\varepsilon=O(1)$. For fixed $\varepsilon$, further refinement eventually yields $\Delta t\ll\varepsilon$, where the classical second-order behavior is recovered. 
 
\subsection{Fully Discrete $(2, 2M)$ Leapfrog FDTD Methods}\label{22MLF}
In this section, we derive fully discrete FDTD methods that use a second-order leapfrog time integrator for Maxwell's equations and couple it with second-order time discretizations for the material polarization equations; the resulting schemes are second-order in time and order $2M$ in space for $M\in\mathbb{N}$, and we refer to them as $(2,2M)$ leapfrog FDTD methods.

Let $N \in \mathbb{N}$. Given time step $\Delta t > 0$, we define primal and dual time grids \(t^n = n\Delta t\) for \(n=0,\dots,N\) with \(t^N = T\), and \(t^{n+\frac12} = \left(n+\tfrac12\right)\Delta t\) for \(n=0,\dots,N-1\). The discrete electric field is computed at primal space time nodes, and the magnetic field at dual space time nodes. For a field \(F(x,t)\), we write \(F_j^n \approx F(x_j,t^n)\) on the primal mesh and \(F_{j+\frac12}^{\,n+\frac12} \approx F\!\left(x_{j+\frac12},t^{n+\frac12}\right)\) on the dual mesh. The notation $F_h^n \in V_{0,h}$ indicates a grid function with values $F_h^n := (F_j^n), n \geq 0, 0 \leq j \leq J$. Similarly, the notation $F_h^{n+\frac{1}{2}} \in V_{\frac{1}{2},h}$ indicates a grid function with values $F_h^{n+\frac{1}{2}} := (F_{j+\frac{1}{2}}^{n+\frac{1}{2}}), n \geq 0, 0 \leq j \leq J-1$. 

For a grid function $F_h^n$, we define its time average at time $t^{n+\frac{1}{2}}$ by
\begin{equation}
\overline{F}_h^{n+\frac{1}{2}} := \frac{1}{2} \left( F_h^n + F_h^{n+1} \right). 
\end{equation}
and the discrete time derivative by
\begin{equation}
\delta_t F_h^{n+\frac{1}{2}} := \frac{F_h^{n+1} - F_h^n}{\Delta t}. 
\end{equation}
We discretize \eqref{faraday}-\eqref{ampere} with the staggered \(2M\) spatial scheme and a modified second-order leapfrog step in time. The ODEs \eqref{density}, \eqref{lorentz} and \eqref{constlaw} are discretized using the standard trapezoidal time integrator, while a second order approximation of the modified exponential time integrator derived in \eqref{eq:modified-exp-relation} is proposed for the stiff part of the ADE \eqref{kerrdebye1d}. The fully discrete \((2,2M)\) leapfrog FDTD schemes for the Maxwell-Kerr-Debye-Lorentz model are as follows. 
 
Assume $D_h^{n}, E_{h}^n, \chi_{h}^n, P_{h}^n , J_{h}^n \in V_{0,h}$ and $H_{h}^{n-\frac{1}{2}} \in V_{\frac{1}{2},h}$ are given. Then $\forall \, n \in \mathbb{N}$, with $0 < n \leq N$, find $D_h^{n+1}, E_{h}^{n+1}, \chi_{h}^{n+1}, P_{h}^{n+1}, J_{h}^{n+1} \in V_{0,h}$ and $H_{h}^{n+\frac{1}{2}} \in V_{\frac{1}{2},h}$ so that $\forall \, 0 \leq j \leq J-1$ we have

\begin{subequations} \label{leapfrog}
    \begin{align}
       \mu_0 \frac{H_{j+\frac{1}{2}}^{n+\frac{1}{2}} - H_{j+\frac{1}{2}}^{n-\frac{1}{2}}}{\Delta t} &= \left( \mathcal{D}_{h}^{(2M)} E_h^n \right)_{j+\frac{1}{2}}, \label{eq:faradaydiscreteLF}  \\
       \frac{D_j^{n+1} - D_j^n}{\Delta t} &= \left( \widetilde{\mathcal{D}}_{h}^{(2M)} H_h^{n+\frac{1}{2}} \right)_j, \label{eq:amperediscreteLF} \\
       \left( \frac{1}{2} + \frac{1}{2} e^{-\frac{\Delta t}{\varepsilon}} + \frac{\Delta t}{2\varepsilon} e^{-\frac{\Delta t}{2\varepsilon}}\right) \chi_j^{n+1} &= e^{-\frac{\Delta t}{\varepsilon}} \chi_j^n+ \frac{1}{2} \left( 1 - e^{-\frac{\Delta t}{\varepsilon}} \right) (E_j^{n+1})^2 + \frac{\Delta t}{2\varepsilon} e^{-\frac{\Delta t}{2\varepsilon}}
        ( E_j^n)^2 ,\label{eq:chidiscreteLF}\\
        \frac{P_j^{n+1} - P_j^n}{\Delta t} &= \frac{1}{2} \left( J_j^{n+1} + J_j^n \right),\label{eq:densitydiscreteLF}\\
        \frac{J_j^{n+1} - J_j^n}{\Delta t} & = -\frac{1}{2\tau} \left( J_j^{n+1} + J_j^n \right)- \frac{\omega_0^2}{2} \left(P_j^{n+1} + P_j^n \right)+ \frac{\omega_p^2}{2} \left( E_j^{n+1} + E_j^n \right),\label{eq:lorentzdiscreteLF}\\
        D^{n+1}_j &= \epsilon_0\left(\left(\epsilon_\infty +a\chi^{n+1}_j\right)E^{n+1}_j + P^{n+1}_j\right).\label{algcontlawLF}
    \end{align}
\end{subequations}
\begin{remark}
As $\varepsilon \to 0$, \eqref{eq:chidiscreteLF} becomes $\chi^{n+1}_j = (E^{n+1}_j)^2$. Hence, the correct asymptotic limit is captured. 
\end{remark}

The $(2, 2M)$ leapfrog FDTD methods satisfy the following discrete analogues of Theorem~\ref{ctchi}-\ref{ctenergylorentz}. 
\begin{theorem}[Discrete positivity-preserving property of the susceptibility $\chi^n_h$]\label{eq:positivitydiscrete}
Given $n \in \mathbb{N}, n \geq 0$, and a discrete susceptibility grid function $\chi_h^n \in V_{0,h}$. If $\chi_{j}^n \geq 0, \ 0\leq j \leq J$, then $\chi_{j}^{n+1} \geq 0 \ 0\leq j \leq J $.
\end{theorem}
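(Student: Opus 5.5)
The plan is to read positivity directly off the explicit update \eqref{eq:chidiscreteLF}. That formula expresses $\chi_j^{n+1}$ as a quotient whose numerator and denominator are built only from nonnegative quantities. The one subtlety is that \eqref{eq:chidiscreteLF} involves $(E_j^{n+1})^2$. In the coupled scheme, $E_j^{n+1}$ is determined jointly with $\chi_j^{n+1}$ through the constitutive law \eqref{algcontlawLF}. We therefore regard the statement as holding for whatever real value $E_j^{n+1}$ the nonlinear solve produces; we only use that it is real, so $(E_j^{n+1})^2\ge 0$.

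First, set $r=\Delta t/\varepsilon>0$ and write the coefficient on the left-hand side of \eqref{eq:chidiscreteLF} as
\begin{equation*}
c(r)=\tfrac12+\tfrac12 e^{-r}+\tfrac{r}{2}e^{-r/2}.
\end{equation*}
Each of the three terms is strictly positive for $r>0$, and in fact $c(r)>\tfrac12$. Hence $c(r)$ never vanishes, and we may divide to get
\begin{equation*}
\chi_j^{n+1}=\frac{1}{c(r)}\Big(e^{-r}\chi_j^n+\tfrac12\big(1-e^{-r}\big)(E_j^{n+1})^2+\tfrac{r}{2}e^{-r/2}(E_j^n)^2\Big).
\end{equation*}

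Second, check the sign of each term in the bracket:
\begin{itemize}
\item $e^{-r}\chi_j^n\ge0$ by the hypothesis $\chi_j^n\ge0$;
\item $\tfrac12(1-e^{-r})\ge0$ since $e^{-r}<1$ for $r>0$, and it multiplies a square;
\item $\tfrac{r}{2}e^{-r/2}>0$, and it also multiplies a square.
\end{itemize}
So the bracket is a sum of nonnegative terms, and dividing by the positive $c(r)$ gives $\chi_j^{n+1}\ge0$ for every $0\le j\le J$. Periodicity, $\chi_0=\chi_J$, is consistent with this, so no boundary case needs separate treatment.

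The only point needing care, rather than a real obstacle, is the implicit coupling with $E_j^{n+1}$. The argument does not require solving for $E_j^{n+1}$ explicitly, only that it is real. I would add a remark that this statement is unconditional: it involves no CFL condition and holds uniformly in $\varepsilon$, including the limit $r\to\infty$, where $c(r)\to\tfrac12$ and $\chi_j^{n+1}\to(E_j^{n+1})^2\ge0$. The same computation applies verbatim to the trapezoidal update \eqref{eq:chidiscreteb}, which has the same structure.
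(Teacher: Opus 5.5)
Your proposal is correct and follows essentially the same argument as the paper: divide the update \eqref{eq:chidiscreteLF} by its strictly positive left-hand coefficient, and observe that the right-hand side is a nonnegative combination of $\chi_j^n$, $(E_j^{n+1})^2$ and $(E_j^n)^2$. You also state explicitly two points the paper leaves implicit: that the left-hand coefficient exceeds $\tfrac12$, and that only the realness of $E_j^{n+1}$ from the coupled solve is needed.
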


\begin{proof} 
From \eqref{eq:chidiscreteLF}, we have
\begin{equation*}
\left( 
\frac{1}{2} + \frac{1}{2} e^{-\frac{\Delta t}{\varepsilon}} 
+ \frac{\Delta t}{2\varepsilon} e^{-\frac{\Delta t}{2\varepsilon}}
\right) \chi_{j}^{n+1}
= e^{-\frac{\Delta t}{\varepsilon}} \chi_{j}^n
+ \frac{1}{2} \left( 1 - e^{-\frac{\Delta t}{\varepsilon}} \right) ( E_{j}^{n+1})^2
+ \frac{\Delta t}{2\varepsilon} e^{-\frac{\Delta t}{2\varepsilon}} ( E_{j}^n)^2,
\end{equation*}
where $e^{-\frac{\Delta t}{\varepsilon}} > 0$, 
$\tfrac{1}{2}\left( 1 - e^{-\frac{\Delta t}{\varepsilon}} \right) > 0$, 
and $\tfrac{\Delta t}{2\varepsilon} e^{-\frac{\Delta t}{2\varepsilon}} > 0$. Since $\chi_{j}^n \geq 0$, we have $\chi_{j}^{n+1} \geq 0$.
\end{proof}
The following summation by parts lemma and the positive definiteness of the operator $\mathcal{A}_h$ will be used to establish discrete energy stability of the proposed scheme.
\begin{lemma}[Summation by Parts]\label{sumbypart} Consider the grid functions $U_h \in V_{0,h}$ and $W_h \in V_{\frac{1}{2},h}$. Then, the following summation by parts (SBP) identity holds
\begin{equation} 
\left\langle \mathcal{D}_{h}^{(2M)} U_h, W_h \right\rangle_{\frac{1}{2},h} = - \left\langle \widetilde{\mathcal{D}}_{h}^{(2M)} W_h, U_h \right\rangle_{0,h}.
\end{equation}
\end{lemma}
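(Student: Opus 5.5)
By linearity of both sides in the difference operators, and by the definition \eqref{eq: 16}, it suffices to prove the identity for each second-order building block. That is, I would first show that for every $p\in\{1,\dots,M\}$,
\begin{equation*}
\left\langle \mathcal{D}_{p,h}^{(2)} U_h, W_h \right\rangle_{\frac12,h} = - \left\langle \widetilde{\mathcal{D}}_{p,h}^{(2)} W_h, U_h \right\rangle_{0,h}.
\end{equation*}
Multiplying this by $\lambda_{2p-1}^{2M}$ and summing over $p$ then gives the statement, since both inner products are bilinear.

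For fixed $p$, I expand the left-hand side from the definitions:
\begin{equation*}
\left\langle \mathcal{D}_{p,h}^{(2)} U_h, W_h \right\rangle_{\frac12,h} = \frac{1}{2p-1}\sum_{\ell=0}^{J-1} \left(U_{\ell+p} - U_{\ell-p+1}\right) W_{\ell+\frac12}.
\end{equation*}
I split this into two sums. Periodicity is the key tool: indices of $U$ are read mod $J$ (from $U_0=U_J$) and likewise for $W$. This lets me shift the summation index in each sum without changing its range.
\begin{itemize}
\item In $\sum_\ell U_{\ell+p}W_{\ell+\frac12}$, I set $m=\ell+p$. The sum becomes $\sum_{m=0}^{J-1} U_m W_{m-p+\frac12}$.
\item In $\sum_\ell U_{\ell-p+1}W_{\ell+\frac12}$, I set $m=\ell-p+1$. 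The sum becomes $\sum_{m=0}^{J-1} U_m W_{m+p-\frac12}$.
\end{itemize}
Subtracting the second from the first gives
\begin{equation*}
\frac{h}{2p-1}\sum_{m=0}^{J-1} U_m\left(W_{m-p+\frac12}-W_{m+p-\frac12}\right) = -h\sum_{m=0}^{J-1} U_m \left(\widetilde{\mathcal{D}}_{p,h}^{(2)}W_h\right)_m,
\end{equation*}
which is exactly $-\langle \widetilde{\mathcal{D}}_{p,h}^{(2)} W_h, U_h\rangle_{0,h}$.

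The only delicate step is justifying the index shifts. Each summand is $J$-periodic in $\ell$, so a sum over any $J$ consecutive indices is the same. I would state explicitly that the grid functions are extended periodically; then the stencils, which reach up to $p\le M$ points beyond the domain, make sense, and no boundary terms appear. The condition $M<J$ (or simply periodic wrap-around for any $M$) should be noted but causes no difficulty. Apart from this, the argument is routine bookkeeping.
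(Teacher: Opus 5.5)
Your proof is correct. The paper gives no argument of its own here and simply cites \cite{cohen2002higher}; your direct verification is the standard argument behind that reference.

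The two steps both hold. First, you reduce to each second-order stencil, which works because $\mathcal{D}_h^{(2M)}$ and $\widetilde{\mathcal{D}}_h^{(2M)}$ use the same coefficients $\lambda_{2p-1}^{2M}$. Second, you shift indices under periodic wrap-around, which is implicit in $v_0=v_J$ and $u_{\frac12}=u_{J+\frac12}$, so no boundary terms appear. Writing it out makes explicit exactly where periodicity is used. This matters because the identity, and hence the energy proofs that rely on it, no longer apply verbatim with the one-sided boundary stencils used in the paper's soliton and silica experiments.

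There is one bookkeeping slip. In your final display the prefactor should be $\frac{1}{2p-1}$, as in your first display, not $\frac{h}{2p-1}$: the $h$ from the inner product already cancels the $h$ in the stencil. With $\frac{h}{2p-1}$, the stated equality with $-h\sum_m U_m\bigl(\widetilde{\mathcal{D}}_{p,h}^{(2)}W_h\bigr)_m$ would be off by a factor of $h$.
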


\begin{proof}
    See \cite{cohen2002higher}.
\end{proof}

\begin{theorem}[Positive Definiteness]\label{positiveA} \textit{Consider the operator $\mathcal{A}_h : V_{0,h} \rightarrow V_{0,h}$ defined as}
\begin{equation}
\mathcal{A}_h := I_h + \gamma^2 \widetilde{\mathcal{D}}_{h}^{(2M)} \circ \mathcal{D}_{h}^{(2M)}, 
\end{equation}
\textit{where $I_h : V_{0,h} \rightarrow V_{0,h}$ is the identity operator, the $2M, M \in \mathbb{N}$ order spatial discretizations are as defined in \eqref{eq: 15}, \eqref{eq: 16} and $\gamma = \frac{c_\infty \Delta t}{2}$ is a constant independent of the mesh step $h$, but depends on $\Delta t$ and the speed $c_\infty$. The operator $\mathcal{A}_h$ is positive definite if and only if $\gamma$ satisfies the condition}
\begin{align}\label{eq: cfl}
\displaystyle \frac{2 \gamma}{h} < \left[\sum_{\ell = 1}^M \frac{[(2\ell - 3)!!]^2}{(2\ell - 1)!}\right]^{-1}.
\end{align}
\end{theorem}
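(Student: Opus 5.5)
The plan is to reduce positive definiteness of $\mathcal{A}_h$ to a bound on the norm of $\mathcal{D}_{h}^{(2M)}$, and then compute that norm by discrete Fourier analysis on the periodic grid. First, for $u_h \in V_{0,h}$, apply Lemma~\ref{sumbypart} with $U_h = u_h$ and $W_h = \mathcal{D}_{h}^{(2M)} u_h$. This gives
\begin{equation*}
\langle \mathcal{A}_h u_h, u_h\rangle_{0,h} = \|u_h\|_{0,h}^2 + \gamma^2 \langle \widetilde{\mathcal{D}}_{h}^{(2M)}\mathcal{D}_{h}^{(2M)} u_h, u_h\rangle_{0,h} = \|u_h\|_{0,h}^2 - \gamma^2 \|\mathcal{D}_{h}^{(2M)} u_h\|_{\frac12,h}^2 .
\end{equation*}
The same identity shows $\mathcal{A}_h$ is self-adjoint. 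Hence $\mathcal{A}_h$ is positive definite if and only if $\gamma\,\|\mathcal{D}_{h}^{(2M)}\| < 1$ in the operator norm from $V_{0,h}$ to $V_{\frac12,h}$.

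Second, compute the symbol of $\mathcal{D}_{h}^{(2M)}$. Because of periodicity, the grid functions $u_\ell = e^{ik x_\ell}$ with $k = 2\pi m/L$ form an orthogonal basis of $V_{0,h}$. Each operator $\mathcal{D}_{p,h}^{(2)}$ maps such a mode to $\frac{2i\sin((2p-1)\theta)}{(2p-1)h}\, e^{ik x_{\ell+\frac12}}$, where $\theta = kh/2$. Therefore
\begin{equation*}
\|\mathcal{D}_{h}^{(2M)}\| = \frac{2}{h}\max_{\theta} |f_M(\theta)|, \qquad f_M(\theta) := \sum_{p=1}^M \lambda_{2p-1}^{2M}\frac{\sin((2p-1)\theta)}{2p-1},
\end{equation*}
with $\theta$ ranging over the discrete frequencies $\pi m/J$. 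Since $f_M$ is odd and symmetric about $\pi/2$, it suffices to consider $\theta\in[0,\pi/2]$.

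Third, and this is the main obstacle, show that $\max_{[0,\pi/2]} |f_M| = f_M(\pi/2) = \sum_{\ell=1}^M [(2\ell-3)!!]^2/(2\ell-1)!$. I would use the representation from \cite{cohen2002higher,bokil2012analysis}, namely
\begin{equation*}
f_M(\theta) = \sum_{\ell=1}^M \frac{[(2\ell-3)!!]^2}{(2\ell-1)!}\,\sin^{2\ell-1}\theta .
\end{equation*}
This is the degree-$(2M-1)$ truncation in $s=\sin\theta$ of the Taylor series of $\arcsin s$, which is exactly why the $\lambda^{2M}_{2p-1}$ give order $2M$. If I cannot simply cite it, I would verify it by checking that $f_M'(\theta) = \sum_p \lambda_{2p-1}^{2M}\cos((2p-1)\theta)$ equals $\cos\theta \sum_{\ell} c_\ell (2\ell-1)\sin^{2\ell-2}\theta$. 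Both sides are even trigonometric polynomials of degree $2M-1$ in $\theta$. They agree to order $\theta^{2M-2}$ at $0$ by the accuracy conditions defining $\lambda$, and both are odd about $\theta=\pi/2$. Matching $M$ independent coefficients then forces equality, up to a counting argument I would need to check. Once the representation holds, all coefficients are positive, so $f_M$ is nonnegative and increasing in $s\in[0,1]$, and its maximum is attained at $s=1$, that is, $\theta=\pi/2$.

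Finally, combine the steps. Positive definiteness is equivalent to $\gamma\cdot\frac{2}{h} f_M(\pi/2) < 1$, which is \eqref{eq: cfl}. For the ``only if'' direction, note that $\theta=\pi/2$ corresponds to the sawtooth mode $k h=\pi$. This mode is available when $J$ is even; for general $J$, I would state the condition as the sharp bound uniform in $h$, with the maximum approached as $J\to\infty$.
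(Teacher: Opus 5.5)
Your proposal is correct, and it is the standard Fourier argument behind the paper's one-line proof, which only cites \cite{bokil2018high}: summation by parts gives $\langle \mathcal{A}_h u_h,u_h\rangle_{0,h}=\|u_h\|_{0,h}^2-\gamma^2\|\mathcal{D}_h^{(2M)}u_h\|_{\frac12,h}^2$, and the symbol identity $f_M(\theta)=\sum_{\ell=1}^M \frac{[(2\ell-3)!!]^2}{(2\ell-1)!}\sin^{2\ell-1}\theta$ from \cite{bokil2012analysis} puts the maximum at $\theta=\pi/2$ (your counting check of that identity does close, since both sides of your derivative identity lie in the $M$-dimensional span of $\cos((2n-1)\theta)$, $n=1,\dots,M$, and their even Taylor coefficients up to $\theta^{2M-2}$ form a nonsingular Vandermonde system in the distinct values $(2n-1)^2$). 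Your caveat on the ``only if'' direction is also right and is glossed over in the statement: that direction holds as stated only when $\theta=\pi/2$ is one of the discrete frequencies, i.e.\ for $J$ even, since for example with $M=1$ and $J=3$ the exact positive-definiteness threshold is $2\gamma/h<2/\sqrt{3}$ rather than $2\gamma/h<1$.
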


\begin{proof}
    See \cite{bokil2018high}.
\end{proof} 
We now prove the main energy stability result for the leapfrog schemes. To do so, we need an adjustment in the handling of the natural numerical constitutive law \eqref{algcontlawLF}, which does not yield a provable energy relation or guarantee energy stability. To address this, we introduce an alternative energy-based numerical constitutive law, following \cite{PengJCP2020}:
\begin{align}\label{eq:energybasedconstlaw}
    &\left(D^{n+1}_j - D^n_j\right)\left(E^{n+1}_j + E^n_j\right) - \frac{a\epsilon_0 \Delta t}{\varepsilon} \left(\frac{\chi^{n+1}_j +\chi^n_j}{2} - \frac{(E^{n+1}_j)^2 + (E^n_j)^2}{2}\right)^2 \notag\\
    &= \epsilon_0\epsilon_\infty\left((E_j^{n+1})^2-(E_j^n)^2\right)
    + \frac{\epsilon_0a}{2}\left((\chi_j^{n+1})^2-(\chi_j^n)^2\right) \notag\\
    &+ \epsilon_0a\left(\chi_j^{n+1}(E_j^{n+1})^2
    - \chi_j^n (E_j^n)^2\right) + \epsilon_0 \left(P^{n+1}_j - P^n_j\right)\left(E^{n+1}_j + E^n_j\right).
\end{align} 
\begin{remark} As discussed in \cite{PengJCP2020}, the above expression is a second-order in time approximation for the term  $(D^{n+1}_j - D^n_j)$ multipled by $(E^{n+1}_j + E^n_j)$.
\end{remark}
\begin{theorem}[Discrete energy decay]\label{energystabilityLF}
Let $H_h^{n+\frac{1}{2}} \in V_{\frac{1}{2},h}$, $D^n_h, E_h^n, \chi_h^n, P^n_h, J^n_h \in V_{0,h}$, and assume $\chi^n\geq 0$. Under the CFL condition
\begin{equation}\label{eq: 28}
\frac{c_\infty \Delta t}{h} < \left[ \sum_{\ell=1}^{M} \frac{[(2\ell - 3)!!]^2}{(2\ell - 1)!} \right]^{-1}, 
\end{equation}
where $c_\infty^2 = \frac{1}{\epsilon_0 \epsilon_\infty \mu_0}$, \textit{the $(2,2M)$ order leapfrog FDTD scheme satisfies the discrete energy identity}
\begin{equation}\label{eq:discretenergyrel}
\delta_t \mathcal{E}_{h, LF}^{n+\frac{1}{2}} = -\frac{a\epsilon_0}{2\varepsilon}\Biggl\|\frac{\chi_h^{n+1} + \chi_h^n}{2} -\frac{(E_h^{n+1})^2 + (E_h^n)^2}{2}\Biggr\|^2_{0,h} - \frac{\epsilon_0}{\omega_p^2 \tau} \Biggl\|\frac{J^{n+1}_h + J^n_h}{2}\Biggr\|^2_{0,h},
\end{equation}
where the discrete energy is defined for all $n \in \mathbb{N}$ as
\begin{align}\label{eq:discretenergyLF}
\mathcal{E}_{h, LF}^n := & \frac{1}{2}\Biggl(\mu_0 \| \overline{H_h}^{n+\frac{1}{2}} \|_{\frac{1}{2},h}^2 + \epsilon_0 \epsilon_\infty \left\langle E_h^n, \mathcal{A}_hE_h^n \right\rangle_{0,h} + a\epsilon_0\left\langle\chi_h^n,(E_h^n)^2\right\rangle_{0,h} + \frac{a\epsilon_0}{2}\bigl\|\chi_h^n\bigr\|^2_{0,h} \notag \\ 
& \hspace{9cm} + \frac{\epsilon_0 \omega_0^2}{\omega_p^2}\|P^n_h\|^2_{0,h} + \frac{\epsilon_0}{\omega_p^2}\|J^n_h\|^2_{0,h}\Biggr).
\end{align}
\end{theorem}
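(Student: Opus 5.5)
The plan is the standard discrete energy method. We test each discrete equation against a suitable time-averaged quantity, sum over $j$, use Lemma~\ref{sumbypart} to cancel the curl terms, and replace the $D$--$E$ coupling by the energy-based constitutive law \eqref{eq:energybasedconstlaw}. The magnetic contribution is handled as in the linear leapfrog analysis of \cite{bokil2018high}, via the operator $\mathcal{A}_h$.

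\emph{Step 1 (Maxwell part).} Multiply \eqref{eq:amperediscreteLF} by $\frac{\Delta t}{2}(E_j^{n+1}+E_j^n)$ and sum with weight $h$. This gives
\[
\tfrac12\big\langle D_h^{n+1}-D_h^n,\,E_h^{n+1}+E_h^n\big\rangle_{0,h}=\tfrac{\Delta t}{2}\big\langle \widetilde{\mathcal D}_h^{(2M)}H_h^{n+\frac12},\,E_h^{n+1}+E_h^n\big\rangle_{0,h}.
\]
Next, write \eqref{eq:faradaydiscreteLF} at levels $n$ and $n+1$, and add the two equations tested against $H_h^{n+\frac12}$. Lemma~\ref{sumbypart} then converts the right-hand side above into $-\frac{\mu_0}{2}\big(\|H_h^{n+\frac32}\|^2-\|H_h^{n-\frac12}\|^2\big)$-type differences. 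The plain $\|H\|^2$ differences must be regrouped into differences of $\|\overline{H}_h^{n+\frac12}\|^2$. The leftover cross term is expressed through $\mathcal D_h^{(2M)}E_h$, using Faraday once more. This produces exactly $\frac{\mu_0\Delta t^2}{4\mu_0^2}\|\mathcal D_h^{(2M)}E_h^{n}\|^2$-type terms. Since $\frac{\Delta t^2}{4\mu_0}=\epsilon_0\epsilon_\infty\gamma^2$ with $\gamma=c_\infty\Delta t/2$, these combine with $\epsilon_0\epsilon_\infty\|E_h^n\|^2$ into $\epsilon_0\epsilon_\infty\langle E_h^n,\mathcal A_hE_h^n\rangle_{0,h}$, after using Lemma~\ref{sumbypart} to write $-\|\mathcal D E\|^2=\langle \widetilde{\mathcal D}\mathcal D E,E\rangle$. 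I expect this bookkeeping to be the main obstacle. The goal is to get the exact telescoping form $\frac{1}{\Delta t}\big(\tfrac{\mu_0}{2}\|\overline H^{n+\frac12}\|^2+\tfrac{\epsilon_0\epsilon_\infty}{2}\langle E^{n},\mathcal A_hE^{n}\rangle\big)$ (with the correct index shift) minus the same expression at level $n-1$. Relative to $\delta_t$, the energy definition must be read with its staggered indexing, and I would carefully match which $\overline H$ belongs to which $\mathcal E^n$.

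\emph{Step 2 (constitutive law).} Substitute \eqref{eq:energybasedconstlaw} for $(D^{n+1}_j-D^n_j)(E^{n+1}_j+E^n_j)$. This yields the differences of $\epsilon_0\epsilon_\infty E^2$, $a\epsilon_0\chi E^2$ and $\frac{a\epsilon_0}{2}\chi^2$ needed in \eqref{eq:discretenergyLF}. It also yields the dissipation term $\frac{a\epsilon_0\Delta t}{\varepsilon}\|\cdot\|^2$, which after division by $2\Delta t$ gives the first term of \eqref{eq:discretenergyrel}. Finally, it leaves the polarization coupling $\epsilon_0\langle P^{n+1}-P^n,E^{n+1}+E^n\rangle$.

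\emph{Step 3 (Lorentz part).} Multiply \eqref{eq:lorentzdiscreteLF} by $\frac{\epsilon_0}{\omega_p^2}(J^{n+1}+J^n)$, and \eqref{eq:densitydiscreteLF} by $\frac{\epsilon_0\omega_0^2}{\omega_p^2}(P^{n+1}+P^n)$. The $\omega_0^2$ cross terms cancel. The forcing term $\frac{\epsilon_0}{2}\langle E^{n+1}+E^n,J^{n+1}+J^n\rangle\Delta t$ equals $\epsilon_0\langle E^{n+1}+E^n,P^{n+1}-P^n\rangle$ by \eqref{eq:densitydiscreteLF}, and this cancels the leftover coupling from Step 2. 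What remains are the telescoping $P,J$ energies and the damping term $-\frac{\epsilon_0}{\omega_p^2\tau}\|\overline J\|^2$.

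Summing Steps 1--3 and dividing by $\Delta t$ gives \eqref{eq:discretenergyrel}. Under \eqref{eq: 28}, Theorem~\ref{positiveA} gives positivity of $\mathcal A_h$, because $2\gamma/h=c_\infty\Delta t/h$. Theorem~\ref{eq:positivitydiscrete} gives $\chi_h^n\ge0$, so every term of $\mathcal E_{h,LF}^n$ is nonnegative. Hence the identity is a genuine energy decay statement.
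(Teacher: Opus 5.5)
Your proposal is correct and follows the paper's proof essentially step for step: averaging Faraday over two levels and testing against $H_h^{n+\frac12}$, testing Amp\`ere against $\overline{E}_h^{n+\frac12}$, summation by parts, the energy-based constitutive law \eqref{eq:energybasedconstlaw}, the Lorentz energy argument, and Theorems~\ref{positiveA} and~\ref{eq:positivitydiscrete} for nonnegativity. The one imprecision is in Step 1: summation by parts gives the cross-product difference $\langle H_h^{n+\frac32},H_h^{n+\frac12}\rangle_{\frac12,h}-\langle H_h^{n+\frac12},H_h^{n-\frac12}\rangle_{\frac12,h}$, not plain squared norms. The paper telescopes this directly through an auxiliary energy and then applies the parallelogram law $\langle H_h^{n-\frac12},H_h^{n+\frac12}\rangle_{\frac12,h}=\|\overline{H}_h^{n}\|_{\frac12,h}^2-\frac{\Delta t^2}{4\mu_0^2}\|\mathcal{D}_h^{(2M)}E_h^n\|_{\frac12,h}^2$, with $\overline{H}_h^{n}=\frac12(H_h^{n-\frac12}+H_h^{n+\frac12})$; this is exactly your regrouping, and it shows that the level-$n$ energy pairs $E_h^n$ with this average and telescopes between levels $n$ and $n+1$.
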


\begin{proof}
We take the average of the equation \eqref{eq:faradaydiscreteLF} at the time step $n$ and $n+1$, to get
\begin{equation}\label{eq: 31}
\mu_0 \frac{H_{j+\frac{1}{2}}^{n+\frac{3}{2}} - H_{j+\frac{1}{2}}^{n-\frac{1}{2}}}{2\Delta t}
= \frac{1}{2} \left( \left(\mathcal{D}_{h}^{(2M)} E_h^{n+1}\right)_{j+\frac{1}{2}} + \left(\mathcal{D}_{h}^{(2M)} E_h^n\right)_{j+\frac{1}{2}} \right).
\end{equation}
Multiplying the equation \eqref{eq: 31} by $H_{j+\frac{1}{2}}^{n+\frac{1}{2}}$, we get
\begin{equation}
\mu_0 \frac{H_{j+\frac{1}{2}}^{n+\frac{3}{2}} - H_{j+\frac{1}{2}}^{n-\frac{1}{2}}}{2\Delta t} \cdot H_{j+\frac{1}{2}}^{n+\frac{1}{2}} =
\frac{1}{2} \left( \left(\mathcal{D}_{h}^{(2M)} E_h^{n+1}\right)_{j+\frac{1}{2}} + \left(\mathcal{D}_{h}^{(2M)} E_h^n\right)_{j+\frac{1}{2}} \right) \cdot H_{j+\frac{1}{2}}^{n+\frac{1}{2}}. 
\end{equation}
Summing over all mesh points on the dual grid, multiplying by $h$ and simplifying yields 
\begin{equation}\label{eq: 33}
     \frac{1}{2\Delta t} \left(\mu_0\left\langle H^{n+\frac{3}{2}}_h , H^{n+\frac{1}{2}}_h\right\rangle_{\frac{1}{2}, h} - \mu_0\left\langle H^{n+\frac{1}{2}}_h , H^{n-\frac{1}{2}}_h\right\rangle_{\frac{1}{2}, h}\right)  = \frac{1}{2}\left\langle \left(\mathcal{D}_{h}^{(2M)}E_h^{n+1}
    +\mathcal{D}_{h}^{(2M)}E_h^{n}\right),\, H_h^{n+\frac12} \right\rangle_{\frac12,h}.
\end{equation}
Multiplying the equation \eqref{eq:amperediscreteLF} by $\overline{E_j}^{n+\frac{1}{2}}$, we obtain 
\begin{align}
 \frac{1}{2\Delta t}(D^{n+1}_j - D^n_j)(E^{n+1}_j + E^n_j)
= \left( \widetilde{\mathcal{D}}_{h}^{(2M)} H_{h} \right)^{n+\frac{1}{2}}_j \cdot \overline{E_j}^{n+\frac{1}{2}}.
\end{align}
Summing over all mesh points on the primal grid, multiplying by $h$ and simplifying gives 
\begin{align}\label{eq: 35}
 \frac{1}{2\Delta t}\left\langle D_h^{n+1}-D_h^n, E_h^{n+1}+E_h^n \right\rangle_{0,h}
= \frac{1}{2} \left\langle \widetilde{\mathcal{D}}_{h}^{(2M)} H_h^{n+\frac{1}{2}}, \, E_h^{n+1}+E_h^n \right\rangle_{0,h}.
\end{align}
Multiply \eqref{eq:densitydiscreteLF} by $P_h^{n+1} + P_h^n$, summing over all mesh points on the primal grid and multiplying by $h$, yields
\begin{equation}\label{eq:desnitydiscreteaLF}
    \|P_h^{n+1}\|^2_{0,h} - \|P_h^n\|^2_{0,h} = \frac{\Delta t}{2} \left\langle J_h^{n+1} + J_h^n, P_h^{n+1} + P_h^n \right\rangle_{0,h}.
\end{equation}
Multiply \eqref{eq:lorentzdiscreteLF} by $\left(J_h^{n+1} + J_h^n\right)/\omega_p^2$, sum over all mesh points on the primal grid, multiply by $h$, and rewrite the result by applying \eqref{eq:densitydiscreteLF} and \eqref{eq:desnitydiscreteaLF} to obtain 
\begin{align}\label{lorentzfin}
    \frac{1}{\omega_p^2}\left(\|J_h^{n+1}\|^2_{0,h} - \|J_h^n\|^2_{0,h}\right) + \frac{\Delta t}{2\omega_p^2 \tau}\|J_h^{n+1} + J_h^n\|^2_{0,h} &+ \frac{\omega_{0}^2}{\omega_p^2}\left(\|P_h^{n+1}\|^2_{0,h} - \|P_h^n\|^2_{0,h}\right) 
    \notag\\
    &\qquad \qquad = \left\langle E_h^{n+1} + E_h^n, P_h^{n+1} - P_h^n \right\rangle_{0,h}. 
\end{align}
Next, summing \(\eqref{eq:energybasedconstlaw}\) over all mesh points on the primal grid and multiplying by $h$, we obtain
\begin{align}\label{eq:constnorm}
&\left\langle D_h^{n+1}-D_h^n, E_h^{n+1}+E_h^n \right\rangle_{0,h}
=\frac{a\epsilon_0 \Delta t}{\varepsilon}
\left\|
\frac{\chi_h^{n+1}+\chi_h^n}{2}
-
\frac{(E_h^{n+1})^2+(E_h^n)^2}{2}
\right\|_{0,h}^2
\notag\\
&\qquad +
\epsilon_0\epsilon_\infty\left(\|E_h^{n+1}\|_{0,h}^2-\|E_h^n\|_{0,h}^2\right)
+ \frac{a\epsilon_0}{2}\left(\|\chi_h^{n+1}\|_{0,h}^2-\|\chi_h^n\|_{0,h}^2\right)
\notag\\
&\qquad\quad
+ a\epsilon_0\left(
\left\langle \chi_h^{n+1}, (E_h^{n+1})^2 \right\rangle_{0,h}
-
\left\langle \chi_h^n, (E_h^n)^2 \right\rangle_{0,h}
\right)
+ \epsilon_0 \left\langle P_h^{n+1}-P_h^n, E_h^{n+1}+E_h^n \right\rangle_{0,h}.
\end{align}
Combining equations \eqref{eq: 33}, \eqref{eq: 35}, \eqref{lorentzfin}, and \eqref{eq:constnorm} and using the summation by parts property in Lemma \ref{sumbypart}, we obtain 
\begin{equation} \label{eq: finalformLF}
\begin{aligned}
&\frac{1}{2\Delta t}\left(\mu_0\Bigg\langle H^{n+\frac{3}{2}}_h , H^{n+\frac{1}{2}}_h\right\rangle_{\frac{1}{2}, h} - \mu_0\left\langle H^{n+\frac{1}{2}}_h , H^{n-\frac{1}{2}}_h\right\rangle_{\frac{1}{2}, h} 
 + \epsilon_0 \epsilon_\infty \left( \|E_h^{n+1}\|_{0,h}^2 - \|E_h^n\|_{0,h}^2 \right) \\
   &\quad + a\epsilon_0 \left( 
        \left\langle \chi_h^{n+1}, (E_h^{n+1})^2 \right\rangle_{0,h} 
        - \left\langle \chi_h^n, (E_h^n)^2 \right\rangle_{0,h} 
    \right)
 + \frac{a\epsilon_0}{2} \left( 
        \|\chi_h^{n+1}\|_{0,h}^2 - \|\chi_h^n\|_{0,h}^2 
    \right) \\
& \qquad \qquad \qquad + \frac{\epsilon_0}{\omega_p^2}\Big(\|J_h^{n+1}\|_{0,h}^2 - \|J_h^n\|_{0,h}^2\Big)
    + \frac{\epsilon_0 \omega_0^2}{\omega_p^2}\Big(\|P_h^{n+1}\|_{0,h}^2 - \|P_h^n\|^2_{0,h}\Big)\Bigg)
    \\
    & = - \frac{a\epsilon_0}{2\varepsilon} \left\| 
        \frac{\chi_h^{n+1} + \chi_h^n}{2} 
        - \frac{(E_h^{n+1})^2 + (E_h^n)^2}{2} 
    \right\|_{0,h}^2 - \frac{\epsilon_0}{\omega_p^2\tau}\Biggl\|\frac{J_h^{n+1} + J_h^n}{2}\Biggr\|^2_{0,h}.
\end{aligned}
\end{equation}
We define a discrete quantity, $\widetilde{\mathcal{E}}_{h, LF}^n$, for $n \in \mathbb{N}, n\geq 0$, which is an alternate version of the discrete energy defined in \eqref{eq:discretenergyLF}, as, 
\begin{align}\label{eq:energyidentityLF}
\widetilde{\mathcal{E}}_{h,LF}^n &:= \frac{1}{2}\Biggl(\mu_0 \left\langle H_h^{n-\frac{1}{2}}, H_h^{n+\frac{1}{2}} \right\rangle_{\frac{1}{2},h}
+ \epsilon_0 \epsilon_\infty \| E_h^n \|_{0,h}^2 + a\epsilon_0\left\langle \chi_h^n, (E_h^n)^2 \right\rangle_{0,h} 
+ \frac{a\epsilon_0}{2}\|\chi_h^n\|_{0,h}^2 \notag \\
&\hspace{9cm} + \frac{\epsilon_0 \omega_0^2}{\omega_p^2}\|P^n_h\|^2_{0,h}+ \frac{\epsilon_0}{\omega_p^2}\|J^n_h\|^2_{0,h}\Biggr).
\end{align}
Using the definition \eqref{eq:energyidentityLF}, the equation \(\eqref{eq: finalformLF}\) can be simplified as \begin{align}\label{eq:discreteenergy}
\frac{\widetilde{\mathcal{E}}_{h,LF}^{n+1}
-\widetilde{\mathcal{E}}_{h,LF}^{n}}{\Delta t}
&=
-\frac{a\epsilon_0}{2\varepsilon}
\Biggl\|
\frac{\chi_h^{n+1}+\chi_h^n}{2}
-
\frac{(E_h^{n+1})^2+(E_h^n)^2}{2}
\Biggr\|_{0,h}^2
-
\frac{\epsilon_0}{\omega_p^2\tau}
\Biggl\|
\frac{J_h^{n+1}+J_h^n}{2}
\Biggr\|_{0,h}^2
\le 0.
\end{align}
\noindent
which immediately shows \(\widetilde{\mathcal{E}}_{h, LF}^{n+1}\le \widetilde{\mathcal{E}}_{h, LF}^n\).
It remains to show $\widetilde{\mathcal{E}}_{h,LF}^n$ is nonnegative and equivalent to the definition \eqref{eq:discretenergyLF}. 

By applying the parallelogram law and \eqref{eq:faradaydiscreteLF} we have
\begin{align}\label{eq: 39}
\left\langle H_h^{n-\frac{1}{2}}, H_h^{n+\frac{1}{2}} \right\rangle_{\frac{1}{2},h}
&= \frac{1}{4} \left[ \| H_h^{n+\frac{1}{2}} + H_h^{n-\frac{1}{2}} \|_{\frac{1}{2},h}^2 - \| H_h^{n+\frac{1}{2}} - H_h^{n-\frac{1}{2}} \|_{\frac{1}{2},h}^2 \right] \notag \\
&= \| \overline{H_h}^{n} \|_{\frac{1}{2},h}^2 - \frac{\Delta t^2}{4 \mu_0^2} \| \mathcal{D}_{h}^{(2M)} E_h^n \|_{\frac{1}{2},h}^2. 
\end{align}
Using equation \eqref{eq: 39}, and the summation by parts property in Lemma \ref{sumbypart}, one then obtains that 
\begin{align}\label{eq: 40}
\mu_0 \langle H_h^{n-\frac{1}{2}}, H_h^{n+\frac{1}{2}} \rangle_{\frac{1}{2},h} 
&+ \epsilon_0 \epsilon_\infty \| E_h^n \|_{0,h}^2 
= \mu_0 \| \overline{H_h}^{n} \|_{\frac{1}{2},h}^2 
- \frac{\Delta t^2}{4 \mu_0} \| \mathcal{D}_{h}^{(2M)} E_h^n \|_{\frac{1}{2},h}^2 
+ \epsilon_0 \epsilon_\infty \| E_h^n \|_{0,h}^2 \notag \\
&= \mu_0 \| \overline{H_h}^{n} \|_{\frac{1}{2},h}^2 
+ \epsilon_0 \epsilon_\infty \Bigg(\left\langle E_h^n, E_h^n \right\rangle_{0,h} 
- \frac{\Delta t^2}{4 \epsilon_0 \epsilon_\infty \mu_0} \left\langle \mathcal{D}_{h}^{(2M)} E_h^n, \mathcal{D}_{h}^{(2M)} E_h^n \right\rangle_{\frac{1}{2},h}\Bigg) \notag \\
&= \mu_0 \| \overline{H_h}^{n} \|_{\frac{1}{2},h}^2 
+ \epsilon_0 \epsilon_\infty\Bigg( \left\langle E_h^n, E_h^n \right\rangle_{0,h} 
+ \frac{\Delta t^2}{4 \epsilon_0 \epsilon_\infty \mu_0} \left\langle  E_h^n, \widetilde{\mathcal{D}}_{h}^{(2M)}\mathcal{D}_{h}^{(2M)} E_h^n \right\rangle_{0,h} \Bigg)\notag \\
&= \mu_0 \| \overline{H_h}^{n} \|_{\frac{1}{2},h}^2 
+ \epsilon_0 \epsilon_\infty \left\langle E_h^n, \left( I + \frac{c_\infty^2 \Delta t^2}{4} \widetilde{\mathcal{D}}_{h}^{(2M)} \circ \mathcal{D}_{h}^{(2M)} \right) E_h^n \right\rangle_{0,h}\notag\\
&= \mu_0 \| \overline{H_h}^{n+\frac{1}{2}} \|_{\frac{1}{2},h}^2 
+ \epsilon_0 \epsilon_\infty \left\langle E_h^n,\mathcal{A}_h E_h^n \right\rangle_{0,h}.
\end{align} 
Substituting equation \eqref{eq: 40} into definition \eqref{eq:energyidentityLF}, equation \eqref{eq:discreteenergy} becomes 
\begin{align}\label{eq:discreteenergy2}
\frac{\mathcal{E}_{h,LF}^{n+1}
-\mathcal{E}_{h,LF}^{n}}{\Delta t}
&=
-\frac{a\epsilon_0}{2\varepsilon}
\Biggl\|
\frac{\chi_h^{n+1}+\chi_h^n}{2}
-
\frac{(E_h^{n+1})^2+(E_h^n)^2}{2}
\Biggr\|_{0,h}^2
-
\frac{\epsilon_0}{\omega_p^2\tau}
\Biggl\|
\frac{J_h^{n+1}+J_h^n}{2}
\Biggr\|_{0,h}^2
\le 0, 
\end{align}
which is the discrete energy identity \eqref{eq:discretenergyrel} with the discrete quantity $\mathcal{E}_{h, LF}^n$ as defined in \eqref{eq:discretenergyLF}. It only remains to show that the discrete quantity $\mathcal{E}_{h, LF}^n$ does define an energy. Under the CFL condition \eqref{eq: 28}, Theorem \ref{positiveA} provides the positive definiteness of the operator $\mathcal{A}_h$, while Theorem \ref{eq:positivitydiscrete} provides the positivity of the susceptibility  $\chi_{j}^{n+1}$ under the assumption of positivity of $\chi_{j}^{n}$. Thus, the definition \eqref{eq:discretenergyLF} provides a discrete energy and the identity \eqref{eq:discretenergyrel} (also \eqref{eq:discreteenergy2}) is a discrete energy identity for the fully discrete $(2,2M)$ leapfrog FDTD scheme, compatible with the continuous energy definition and energy identity given in Theorem \ref{ctenergy}.  
\end{proof}
\begin{remark} 
As $\varepsilon \to 0$, \eqref{eq:chidiscreteLF} becomes $\chi^{n+1}_h = (E^{n+1}_h)^2$. Thus, the energy estimate \eqref{eq:discretenergyrel} becomes 
\begin{align}\label{eq:discreteenergy2Kerr}
\frac{\mathcal{E}_{h,LF,\mathrm{Kerr}}^{n+1}
-\mathcal{E}_{h,LF,\mathrm{Kerr}}^{n}}{\Delta t}
&=
-
\frac{\epsilon_0}{\omega_p^2\tau}
\Biggl\|
\frac{J_h^{n+1}+J_h^n}{2}
\Biggr\|_{0,h}^2
\le 0, 
\end{align}
with the discrete energy for the Maxwell Kerr-Lorentz model defined as 
\begin{align}\label{eq:discretenergyLFKerr}
\mathcal{E}_{h, LF,\mathrm{Kerr}}^n := & \frac{1}{2}\Biggl(\mu_0 \| \overline{H_h}^{n} \|_{\frac{1}{2},h}^2 + \epsilon_0 \epsilon_\infty \left\langle E_h^n, \mathcal{A}_hE_h^n \right\rangle_{0,h} + \frac{3a\epsilon_0}{2}\|(E_h^n)^2\|^2_{0,h} \notag \\
& \qquad+ \frac{\epsilon_0 \omega_0^2}{\omega_p^2}\|P^n_h\|^2_{0,h} + \frac{\epsilon_0}{\omega_p^2}\|J^n_h\|^2_{0,h}\Biggr).
\end{align}
Hence, the correct asymptotic limit is captured as discussed in \cite{bokil2018high}, where this energy estimate is derived as a special case of the Maxwell Kerr-Lorentz model in 1D. This limiting energy for the Kerr limit is compatible with the continuous energy definition and energy identity given
in Theorem \ref{ctenergylorentz}. Thus, formally, as $\varepsilon \to 0$, we have $\chi_h^n \to (E_h^n)^2$ and 
$\mathcal{E}^n_{h,LF} \to \mathcal{E}^n_{h,LF,\mathrm{Kerr}}$.
\end{remark}

\subsection{Fully Discrete  $(2, 2M)$ Trapezoidal FDTD Methods}\label{22MTraps}
In this section, we construct \((2, 2M)\) order trapezoidal FDTD schemes for the discretization of the system \eqref{eq:kerr-debyelorentz1da}. The primary objective is to develop schemes that are unconditionally stable. These \((2, 2M)\) schemes combine the high-order \(2M\) spatial discretizations with the second-order trapezoidal time integrator. 

The trapezoidal schemes do not stagger the electric and magnetic fields in time as the leapfrog schemes do. However, as in the case of the $(2,2M)$ leapfrog FDTD schemes, the magnetic field lies in \(V_{\frac{1}{2},h}\) and is therefore staggered in space relative to the electric field in \(V_{0,h}\). Unlike the \((2,2M)\) leapfrog FDTD schemes which are conditionally stable under a stability condition, we will show that the $(2,2M)$ trapezoidal FDTD schemes are unconditionally stable due to the implicit nature of the time discretization. 

The fully discrete \((2, 2M)\) trapezoidal FDTD schemes for the Maxwell-Kerr-Debye-Lorentz system are given as follows:

Assume $D_h^{n}, E_h^n, \chi_h^n, P^n_h, J^n_h \in V_{0,h}$ and $H_{h}^{n} \in V_{\frac{1}{2},h}$ are given and $\chi_h^n \geq 0$. Then $\forall \, n \in \mathbb{N}$, with $0 < n \leq N$, find $D_{h}^{n+1}, E_{h}^{n+1}, \chi_h^{n+1}, P^{n+1}_h, J^{n+1}_h \in V_{0,h}$ and $H_{h}^{n+1} \in V_{\frac{1}{2},h}$ so that $\forall \, 0 < j < J$ we have

\begin{subequations} \label{eq:discretelorentzT}
    \begin{align}
       \mu_0 \frac{H_{j+\frac{1}{2}}^{n+1} - H_{j+\frac{1}{2}}^{n}}{\Delta t} &= \frac{1}{2} \left( \left( \mathcal{D}_{h}^{(2M)} E_h^{n+1} \right)_{j+\frac{1}{2}} + \left( \mathcal{D}_{h}^{(2M)} E_h^n \right)_{j+\frac{1}{2}} \right), \label{eq:faradayb}  \\
       \frac{D_j^{n+1} - D_j^n}{\Delta t} &= \frac{1}{2} \left( \left( \widetilde{\mathcal{D}}_{h}^{(2M)} H_h^{n+1} \right)_j + \left( \widetilde{\mathcal{D}}_{h}^{(2M)} H_h^n \right)_j \right), \label{eq:ampereb} \\
       \left( \frac{1}{2} + \frac{1}{2} e^{-\frac{\Delta t}{\varepsilon}} + \frac{\Delta t}{2\varepsilon} e^{-\frac{\Delta t}{2\varepsilon}}\right) \chi_j^{n+1} &= e^{-\frac{\Delta t}{\varepsilon}} \chi_j^n+ \frac{1}{2} \left( 1 - e^{-\frac{\Delta t}{\varepsilon}} \right) (E_j^{n+1})^2 + \frac{\Delta t}{2\varepsilon} e^{-\frac{\Delta t}{2\varepsilon}}
        ( E_j^n)^2 ,\label{eq:chidiscreteb}\\
        \frac{P_j^{n+1} - P_j^n}{\Delta t} &= \frac{1}{2} \left( J_j^{n+1} + J_j^n \right),\label{eq:densitydiscretebTP}\\
        \frac{J_j^{n+1} - J_j^n}{\Delta t} & = -\frac{1}{2\tau} \left( J_j^{n+1} + J_j^n \right)- \frac{\omega_0^2}{2} \left(P_j^{n+1} + P_j^n \right)+ \frac{\omega_p^2}{2} \left( E_j^{n+1} + E_j^n \right),\label{eq:lorentzdiscreteTP}\\
         D^{n+1}_j &= \epsilon_0\left(\left(\epsilon_\infty +a\chi^{n+1}_j\right)E^{n+1}_j + P^{n+1}_j\right).\label{algcontlawT}
    \end{align}
\end{subequations}
\begin{remark}
As was done for the leapfrog schemes, a second order approximation of the modified exponential time integrator derived in \eqref{eq:modified-exp-relation} is used to approximate the Kerr-Debye equation in \eqref{eq:chidiscreteb}. As $\varepsilon \to 0$, \eqref{eq:chidiscreteb} becomes $\chi^{n+1}_j = (E^{n+1}_j)^2$ and the correct asymptotic limit is captured.
\end{remark}
 
Replacing the constitutive law \eqref{algcontlawT} with \eqref{eq:energybasedconstlaw}, we obtain $(2,2M)$ trapezoidal FDTD methods that are unconditionally stable. The theorem below provides a proof of the unconditional stability of these schemes using the energy method. 
\begin{theorem}
Let $H_h^{n} \in V_{\frac{1}{2},h}$, $D^n_h, E_h^n, \chi_h^n, P^n_h, J^n_h \in V_{0,h}$, and assume $\chi^n\geq 0$.  The $(2,2M)$ trapezoidal FDTD schemes \eqref{eq:discretelorentzT} are unconditionally stable and satisfy the discrete energy identity
\begin{equation}\label{energydscreteKD}
\delta_t \mathcal{E}_{h, \text{Trap}}^{n+\frac{1}{2}} = -\,\frac{a\epsilon_0}{2\,\varepsilon}\Biggl\|\frac{\chi_h^{n+1}+\chi_h^n}{2}-\frac{(E_h^{n+1})^2+(E_h^n)^2}{2}\Biggr\|^2_{0,h} - \frac{\epsilon_0}{\omega_p^2 \tau} \Biggl\|\frac{J^{n+1}_h + J^n_h}{2}\Biggr\|^2_{0,h},
\end{equation}
where the discrete energy is defined for all $n\in \mathbb{N}$ as
\begin{align}\label{eq: 30}
\mathcal{E}_{h, \text{Trap}}^n := & \frac{1}{2}\Biggl( \mu_0 \| H_h^{n} \|_{\frac{1}{2},h}^2 + \epsilon_0 \epsilon_\infty \|E^n_h\|^2_{0,h} + a\epsilon_0\left\langle\chi_h^n,(E_h^n)^2\right\rangle_{0,h} + \frac{a\epsilon_0}{2}\bigl\|\chi_h^n\bigr\|^2_{0,h} \notag \\ 
& \quad\quad + \frac{\epsilon_0 \omega_0^2}{\omega_p^2}\|P^n_h\|^2_{0,h} + \frac{\epsilon_0}{\omega_p^2}\|J^n_h\|^2_{0,h}\Biggr).
\end{align}    
\end{theorem}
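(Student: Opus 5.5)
The plan follows the energy method of Theorem~\ref{energystabilityLF}, which is simpler here because $H$ and $E$ live at the same time levels. No parallelogram-law correction or operator $\mathcal{A}_h$ is needed, so no CFL condition arises.

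\emph{Magnetic field.} Multiply \eqref{eq:faradayb} by $H_{j+\frac12}^{n+1}+H_{j+\frac12}^{n}$, sum over the dual grid and multiply by $h$. This gives
\begin{equation*}
\frac{\mu_0}{\Delta t}\left(\|H_h^{n+1}\|_{\frac12,h}^2-\|H_h^{n}\|_{\frac12,h}^2\right)=\frac12\left\langle \mathcal{D}_h^{(2M)}(E_h^{n+1}+E_h^n),\,H_h^{n+1}+H_h^n\right\rangle_{\frac12,h}.
\end{equation*}

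\emph{Electric displacement.} Multiply \eqref{eq:ampereb} by $E_j^{n+1}+E_j^n$, sum over the primal grid and multiply by $h$. This gives
\begin{equation*}
\frac{1}{\Delta t}\left\langle D_h^{n+1}-D_h^n,\,E_h^{n+1}+E_h^n\right\rangle_{0,h}=\frac12\left\langle \widetilde{\mathcal{D}}_h^{(2M)}(H_h^{n+1}+H_h^n),\,E_h^{n+1}+E_h^n\right\rangle_{0,h}.
\end{equation*}
Adding the two identities, Lemma~\ref{sumbypart} with $U_h=E_h^{n+1}+E_h^n$ and $W_h=H_h^{n+1}+H_h^n$ makes the right-hand sides cancel exactly. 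This leaves $\mu_0$ times the change in $\|H_h\|^2$ plus $\langle D_h^{n+1}-D_h^n,E_h^{n+1}+E_h^n\rangle_{0,h}$ equal to zero.

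\emph{Lorentz terms.} The discretizations \eqref{eq:densitydiscretebTP}--\eqref{eq:lorentzdiscreteTP} are identical to the leapfrog ones. The computations \eqref{eq:desnitydiscreteaLF} and \eqref{lorentzfin} therefore carry over verbatim and express $\langle E_h^{n+1}+E_h^n,P_h^{n+1}-P_h^n\rangle_{0,h}$ as the increment of the $P,J$ energies plus the damping term $\frac{\Delta t}{2\omega_p^2\tau}\|J_h^{n+1}+J_h^n\|^2_{0,h}$.

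\emph{Constitutive law.} Replace $\langle D_h^{n+1}-D_h^n,E_h^{n+1}+E_h^n\rangle_{0,h}$ using the energy-based law \eqref{eq:energybasedconstlaw}, summed as in \eqref{eq:constnorm}. This produces the increments of $\epsilon_0\epsilon_\infty\|E_h\|^2$, $a\epsilon_0\langle\chi_h,(E_h)^2\rangle$ and $\frac{a\epsilon_0}{2}\|\chi_h\|^2$, the relaxation term $\frac{a\epsilon_0\Delta t}{\varepsilon}\|\cdot\|^2$, and $\epsilon_0$ times the $P$-coupling term. The $P$-coupling term is then eliminated with the Lorentz identity multiplied by $\epsilon_0$. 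Collecting everything, dividing by $2\Delta t$, and noting $\frac{\Delta t}{2\omega_p^2\tau}\|J^{n+1}+J^n\|^2=\frac{2\Delta t}{\omega_p^2\tau}\|\overline{J}\|^2$, one reads off \eqref{energydscreteKD} with $\mathcal{E}_{h,\text{Trap}}^n$ as in \eqref{eq: 30}. The right-hand side is manifestly nonpositive.

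\emph{Positivity of the energy.} Stability requires $\mathcal{E}_{h,\text{Trap}}^n\ge 0$ and control of the norms. The $H$, $E$, $P$, $J$ and $\chi^2$ terms are sums of squares with positive coefficients. The term $a\epsilon_0\langle\chi_h^n,(E_h^n)^2\rangle$ is nonnegative because Theorem~\ref{eq:positivitydiscrete} applies unchanged to \eqref{eq:chidiscreteb}: it has the same form as \eqref{eq:chidiscreteLF}, so by induction from $\chi_h^0\ge0$ we get $\chi_h^n\ge0$ for all $n$.

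Consequently $\mathcal{E}^n_{h,\text{Trap}}\le\mathcal{E}^0_{h,\text{Trap}}$ for every $\Delta t$ and $h$, which is unconditional stability. The only delicate point is the bookkeeping in the constitutive-law step: the signs and factors of $2$ and $\Delta t$ from \eqref{eq:energybasedconstlaw} must match the stated relaxation coefficient $\frac{a\epsilon_0}{2\varepsilon}$. This matching was already verified in the leapfrog proof, so I expect no genuine obstacle. Existence of a solution to the implicit nonlinear system is implicitly assumed rather than proved.
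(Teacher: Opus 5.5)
Your proposal is correct and takes essentially the same route as the paper. You test \eqref{eq:faradayb} and \eqref{eq:ampereb} with the time sums instead of the averages $\overline{H}_h^{n+\frac12}$, $\overline{E}_h^{n+\frac12}$, which only introduces a harmless factor of two; the remaining steps match the paper's: the curl terms cancel via Lemma~\ref{sumbypart}, the Lorentz identity is reused, the energy-based law \eqref{eq:energybasedconstlaw} is substituted, and Theorem~\ref{eq:positivitydiscrete} gives nonnegativity of the energy. Your factors of $2$ and $\Delta t$ check out and yield exactly \eqref{energydscreteKD}.
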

\begin{proof}
Multiplying the equation \eqref{eq:faradayb} by $\overline{H}_{j+\frac{1}{2}}^{n+\frac{1}{2}}$, we get
\begin{equation}
\mu_0 \frac{H_{j+\frac{1}{2}}^{n+1} - H_{j+\frac{1}{2}}^{n}}{\Delta t} \cdot \overline{H}_{j+\frac{1}{2}}^{n+\frac{1}{2}} =
\left(\mathcal{D}_{h}^{(2M)} \overline{E}_h^{n+\frac{1}{2}}\right)_{j+\frac{1}{2}} \cdot \overline{H}_{j+\frac{1}{2}}^{n+\frac{1}{2}}. 
\end{equation}
Summing over all mesh points on the dual grid, multiplying by $h$ and simplifying yields 
\begin{equation}\label{eq: 49i}
     \frac{1}{2\Delta t} \left(\mu_0\|H^{n+1}_h\|^2_{\frac{1}{2}, h} - \mu_0\|H^{n}_h\|^2_{\frac{1}{2}, h}\right)  = \left\langle
\mathcal{D}_{h}^{(2M)}\overline{E}_h^{n+\frac{1}{2}},\,
\overline{H}_h^{\,n+\frac12}
\right\rangle_{\frac12,h}
\end{equation}
Multiplying the equation \eqref{eq:ampereb} by $\overline{E_j}^{n+\frac{1}{2}}$, we obtain
\begin{align}
 \frac{1}{2\Delta t}(D^{n+1}_j - D^n_j)(E^{n+1}_j + E^n_j)
= \left( \widetilde{\mathcal{D}}_{h}^{(2M)} \overline{H}_h^{n+\frac{1}{2}} \right)_j  \cdot \overline{E_j}^{n+\frac{1}{2}}.
\end{align}
Summing over all mesh points on the primal grid, multiplying by $h$ and simplifying gives 
\begin{align}\label{eq: 49ii}
 \frac{1}{2\Delta t}\left\langle D_h^{n+1}-D_h^n, E_h^{n+1}+E_h^n \right\rangle_{0,h}
= \left\langle
\widetilde{\mathcal{D}}_{h}^{(2M)} \overline{H}_h^{n+\frac{1}{2}},
\,
\overline{E}_h^{\,n+\frac12}
\right\rangle_{0,h}
\end{align}
Multiply \eqref{eq:densitydiscretebTP} by $P_h^{n+1} + P_h^n$, summing over all mesh points on the primal grid, multiplying by $h$ and then simplifying yields 
\begin{equation}\label{eq:desnitydiscretea}
    \|P_h^{n+1}\|^2_{0,h} - \|P_h^n\|^2_{0,h} = \frac{\Delta t}{2} \left\langle J_h^{n+1} + J_h^n, P_h^{n+1} + P_h^n \right\rangle_{0,h}
\end{equation}
Multiplying \eqref{eq:lorentzdiscreteTP} by $\left(J_h^{n+1} + J_h^n\right)/\omega_p^2$, summing over all mesh points on the primal grid, multiplying by $h$ and rewriting the result by applying \eqref{eq:densitydiscretebTP} and \eqref{eq:desnitydiscretea} we obtain 
\begin{align}\label{lorentzfinb}
    \frac{1}{\omega_p^2}\left(\|J_h^{n+1}\|^2 - \|J_h^n\|^2\right) + \frac{\Delta t}{2\omega_p^2 \tau}\|J_h^{n+1} + J_h^n\|^2 &+ \frac{\omega_{0}^2}{\omega_p^2}\left(\|P_h^{n+1}\|^2 - \|P_h^n\|^2\right) 
    \notag\\
    &\qquad \qquad = \left\langle E_h^{n+1} + E_h^n, P_h^{n+1} - P_h^n \right\rangle_{0,h} 
\end{align}
Next, summing \(\eqref{eq:energybasedconstlaw}\) over all mesh points on the primal grid and multiplying by $h$, we obtain
\begin{align}\label{eq:constnormtrap}
&\left\langle D_h^{n+1}-D_h^n, E_h^{n+1}+E_h^n \right\rangle_{0,h}
=\frac{a\epsilon_0 \Delta t}{\varepsilon}
\left\|
\frac{\chi_h^{n+1}+\chi_h^n}{2}
-
\frac{(E_h^{n+1})^2+(E_h^n)^2}{2}
\right\|_{0,h}^2
\notag\\
&\qquad +
\epsilon_0\epsilon_\infty\left(\|E_h^{n+1}\|_{0,h}^2-\|E_h^n\|_{0,h}^2\right)
+ \frac{a\epsilon_0}{2}\left(\|\chi_h^{n+1}\|_{0,h}^2-\|\chi_h^n\|_{0,h}^2\right)
\notag\\
&\qquad\quad
+ a\epsilon_0\left(
\left\langle \chi_h^{n+1}, (E_h^{n+1})^2 \right\rangle_{0,h}
-
\left\langle \chi_h^n, (E_h^n)^2 \right\rangle_{0,h}
\right)
+ \epsilon_0 \left\langle P_h^{n+1}-P_h^n, E_h^{n+1}+E_h^n \right\rangle_{0,h}.
\end{align}
Combining equations \eqref{eq: 49i}, \eqref{eq: 49ii} and \eqref{lorentzfinb}, and \eqref{eq:constnormtrap} and using the summation by parts property in Lemma \ref{sumbypart}, we obtain  
\begin{align} \label{eq: finalform}
&\frac{1}{2\Delta t}\Bigg(\mu_0\|H^{n+1}_h\|^2_{\frac{1}{2}, h} - \mu_0\|H^{n}_h\|^2_{\frac{1}{2}, h} +
    \epsilon_0 \epsilon_\infty \left( \|E_h^{n+1}\|_{0,h}^2 - \|E_h^n\|_{0,h}^2 \right) 
    + \frac{a\epsilon_0}{2} \left( 
        \|\chi_h^{n+1}\|_{0,h}^2 - \|\chi_h^n\|_{0,h}^2 
    \right) \notag \\
&+ a\epsilon_0 \left( 
        \left\langle \chi_h^{n+1}, (E_h^{n+1})^2 \right\rangle_{0,h} 
        - \left\langle \chi_h^n, (E_h^n)^2 \right\rangle_{0,h}\right) + \frac{\epsilon_0}{\omega_p^2}\Big(\|J_h^{n+1}\|_{0,h}^2 - \|J_h^n\|_{0,h}^2\Big)
    + \frac{\epsilon_0 \omega_0^2}{\omega_p^2}\Big(\|P_h^{n+1}\|_{0,h}^2 - \|P_h^n\|^2_{0,h}\Big)
    \Bigg) \notag \\
    & \quad = -  \frac{a\epsilon_0}{2\varepsilon} \left\| 
        \frac{\chi_h^{n+1} + \chi_h^n}{2} 
        - \frac{(E_h^{n+1})^2 + (E_h^n)^2}{2} 
    \right\|_{0,h}^2 - \frac{\epsilon_0}{\omega_p^2\tau}\|\overline{J}_h^{n+\frac{1}{2}}\|^2_{0,h}.
\end{align}
Using the definition of the discrete energy \eqref{eq: 30}, the equation \eqref{eq: finalform} can be rewritten as 
\begin{align}\label{eq:discreteenergyCN}
  \frac{\mathcal{E}_{h, \text{Trap}}^{n+1} - \mathcal{E}_{h,\text{Trap}}^n}{\Delta t} &= -\,\frac{a\epsilon_0}{2\,\varepsilon}\Biggl\|\frac{\chi_h^{n+1}+\chi_h^n}{2}-\frac{(E_h^{n+1})^2+(E_h^n)^2}{2}\Biggr\|^2 - \frac{\epsilon_0}{\omega_p^2 \tau} \Biggl\|\overline{J}_h^{n+\frac{1}{2}}\Biggr\|^2 \leq 0
\end{align}
which proves the unconditional discrete energy decay result \eqref{energydscreteKD} and shows \(\mathcal{E}_{h, \text{Trap}}^{n+1}\le\mathcal{E}_{h, \text{Trap}}^n\). Theorem \ref{eq:positivitydiscrete} provides the positivity of the susceptibility  $\chi_{j}^{n+1}$ under the assumption of positivity of $\chi_{j}^{n}$. Thus, the definition \eqref{eq: 30} provides a discrete energy and the identity \eqref{energydscreteKD} (also \eqref{eq:discreteenergyCN}) is a discrete energy identity for the fully discrete $(2,2M)$ trapezoidal FDTD scheme, compatible with the continuous energy definition and energy identity given in Theorem \ref{ctenergy}.   
\end{proof}
\begin{remark} 
As $\varepsilon \to 0$, \eqref{eq:chidiscreteLF} becomes $\chi^{n+1}_h = (E^{n+1}_h)^2$. Thus, the energy estimate \eqref{energydscreteKD} becomes 
\begin{align}\label{eq:discreteenergyTrapKerr}
\frac{\mathcal{E}_{h,\mathrm{Trap},\mathrm{Kerr}}^{n+1}
-\mathcal{E}_{h,\mathrm{Trap},\mathrm{Kerr}}^{n}}{\Delta t}
&=
-
\frac{\epsilon_0}{\omega_p^2\tau}
\Biggl\|
\frac{J_h^{n+1}+J_h^n}{2}
\Biggr\|_{0,h}^2
\le 0, 
\end{align}
with the discrete energy for the $(2,2M)$ trapezoidal FDTD schemes for the Maxwell Kerr-Lorentz model defined as 
\begin{align}\label{eq:discretenergyTrapKerr}
\mathcal{E}_{h, \mathrm{Trap},\mathrm{Kerr}}^n := & \frac{1}{2}\Biggl(\mu_0 \| \overline{H_h}^{n} \|_{\frac{1}{2},h}^2 + \epsilon_0 \epsilon_\infty \left\|E_h^n\right\|^2_{0,h} + \frac{3a\epsilon_0}{2}\|(E_h^n)^2\|^2_{0,h} + \frac{\epsilon_0 \omega_0^2}{\omega_p^2}\|P^n_h\|^2_{0,h} + \frac{\epsilon_0}{\omega_p^2}\|J^n_h\|^2_{0,h}\Biggr).
\end{align}
Hence, the correct asymptotic limit is captured as discussed in \cite{bokil2018high}, where this energy estimate is derived as a special case of the Maxwell Kerr-Lorentz model in 1D. This limiting energy for the Kerr limit is compatible with the continuous energy definition and energy identity given
in Theorem \ref{ctenergylorentz}. Thus, formally, as $\varepsilon \to 0$, we have $\chi_h^n \to (E_h^n)^2$ and 
$\mathcal{E}^n_{h,\mathrm{Trap}} \to \mathcal{E}^n_{h,\mathrm{Trap},\mathrm{Kerr}}$.
\end{remark}

\section{Construction of High-Order Temporal Approximations}
In Section~\ref{22MTraps}, we introduced a family of $(2,2M)$ trapezoidal FDTD schemes that are second-order accurate in time and $2M$th-order accurate in space. For these methods, the global truncation error takes the form $\mathcal{O}(\Delta t^{2} + h^{2M})$. In practice, achieving the full spatial accuracy requires a strong coupling between the time step $\Delta t$ and the mesh size $h$. Specifically, to prevent the temporal error from dominating, one must ensure $\Delta t^{2} \leq h^{2M}$, which implies that $\Delta t = \mathcal{O}(h^{M})$. For example, the $(2,4)$ scheme requires $\Delta t = \mathcal{O}(h^{2})$, and the $(2,6)$ scheme requires $\Delta t = \mathcal{O}(h^{3})$. This condition becomes increasingly restrictive as $M$ increases.

To avoid this severe time--step restriction and attain high-order accuracy in both time and space, we construct extrapolated methods in time that allow for arbitrarily high temporal and spatial accuracy with the same accuracy in time and space. We use Richardson extrapolation \cite[p.~183]{BurdenFairesBurden2015}, \cite{burg2009application,fornberg2003some} to extend temporal accuracy from second order to $2M$th order in time, for $M\in \mathbb{N}, M > 1$. We refer to this new class of  methods based on Richardson extrapolation as the $(2M,2M)$ extrapolated trapezoidal FDTD schemes.  

Richardson extrapolation is an important and powerful technique to generate high order temporal approximations for partial differential equations. However, there can be issues with stability and structure preserving properties when applying this technique \cite{burg2009application}. We address these issues in later sections. 
 
 We will construct fully discrete extrapolated trapezoidal FDTD methods first for the limiting Maxwell-Kerr-Lorentz model presented in \eqref{eq:kerr-debyelorentz1db}, and then for the Maxwell Kerr-Debye-Lorentz model presented in \eqref{eq:kerr-debyelorentz1da}. 

\subsection{Fully discrete $(2M, 2M)$ Extrapolated Trapezoidal FDTD Methods for Maxwell Kerr-Lorentz System}\label{2M2M}
In this section, we develop $(2M,2M)$ extrapolated trapezoidal FDTD chemes for the Maxwell--Kerr--Lorentz model using Richardson extrapolation

 We note that each component of the fully discrete solution for the $(2,2M)$ trapezoidal FDTD schemes at a fixed grid point in space and time has a local truncation error expansion in time in which only even powers of $\Delta t$ are present and terms in odd powers of $\Delta t$ vanish . This is due to the trapezoidal approximation of the time derivatives and time averages of lower order terms. For each $M \in \mathbb{N}$, we apply the Richardson extrapolation method \cite{ascoli2018limitations,bach2021effectiveness} successively $M-1$ times to the $(2,2M)$ trapezoidal FDTD schemes described in Section~\ref{22MTraps} to generate methods that have $2M$ order accuracy in time.

We compute the solution of the $(2,2M)$ trapezoidal FDTD schemes for the Maxwell-Kerr-Lorentz model in \eqref{eq:kerr-debyelorentz1db} $M$ times using successively refined time steps $\Delta t, \Delta t/2, \cdots , \Delta t/2^{M-1}$. For $n \geq 0$, let the solution of the $(2,2M)$ trapezoidal FDTD schemes at time grid point $t^n$, computed with time step $\Delta t/2^{k-1}$ be denoted as  $U_{h}^{n,k} = [E_{h}^{n,k}, D_{h}^{n,k}, H_{h}^{n,k}, P_{h}^{n,k}, J_{h}^{n,k}]^T$ and let $U(t^n) = [E(t^n), D(t^n), H(t^n), P(t^n), J(t^n)]^T$ denote the exact solution of the  Maxwell-Kerr-Lorentz model presented in \eqref{eq:kerr-debyelorentz1db} at the time grid point $t^n$. For $k = 1, \ldots, M$, assuming sufficient regularity of the exact solution, the global error in the numerical solution at $t^n$ is given via 
\begin{align}
\label{RE1}
    U_{h}^{n,k}
    &= U(t^{n})
       + \sum_{\ell = 1}^{M-1} C_{h,\ell}
         \left(\Delta t^2 \, 2^{-2(k-1)}\right)^{\ell} + \mathcal{O}(\Delta t^{2M}) +\mathcal{O}(h^{2M}) \notag \\
     &=U(t^{n})
       + \sum_{\ell=1}^{M-1} C_{h,\ell} \,\Delta t^{2\ell} \, s_k^{\,\ell}+ \mathcal{O}(\Delta t^{2M})  +\mathcal{O}(h^{2M}),
\end{align}
where for $\ell = 1,\cdots, M-1, C_{h,\ell} = [C_{E,h,\ell}, C_{D,h,\ell}, C_{H,h,\ell}, C_{P,h,\ell}, C_{J,h,\ell}, C_{\chi,h,\ell}]^T$ depends on time derivatives of the components of the exact solution at $t^n$ and spatial grid points, and where $s_k$ is given by
\[
    s_k = \left(\frac{1}{2^{\,k-1}}\right)^{\!2}
         \;=\; 4^{-(k-1)},\qquad k=1,\dots,M.
\]
The application of Richardson extrapolation $M-1$ times involves seeking coefficients $\{\alpha_k\}_{k=1}^{M}$ such that the extrapolated solution denoted as $U_{h,\mathrm{RE}}^{n}$ satisfies 
\begin{align}
\label{RE2}
    U_{h,\mathrm{RE}}^{n}
      \;=\; \sum_{k=1}^{M}\alpha_k\,
            U_h^{n,k}
      \;=\; U(t^{n}) + \mathcal{O}(\Delta t^{2M})+ \mathcal{O}(h^{2M}),
\end{align}
i.e., the first $M-1$ even-order terms in $\Delta t$ cancel. We note that the presentation of Richardson extrapolation as done above is effectively the same as an application of the general {\it undetermined coefficients method} \cite{cohen2002higher}. Combining \eqref{RE1} and \eqref{RE2}, we can show that these coefficients must satisfy the conditions
\begin{equation}
\sum_{k=1}^{M}\alpha_k\,s_k^{\,\ell} = \delta_{\ell 0} = 
\begin{cases}
1, & \ell=0,\\
0, & \ell=1,2,\dots,M-1,
\end{cases}
\label{eq:moment-conds}
\end{equation}
which can be written as the square linear system $A\boldsymbol{\alpha} = \mathbf{e}_1$ with a Vandermonde matrix
\(A\in\mathbb{R}^{M\times M}\),
\[
A \;=\;
\begin{bmatrix}
1 & 1 & \cdots & 1\\
s_1 & s_2 & \cdots & s_M\\
s_1^{2} & s_2^{2} & \cdots & s_M^{2}\\
\vdots & \vdots & \ddots & \vdots\\
s_1^{M-1} & s_2^{M-1} & \cdots & s_M^{M-1}
\end{bmatrix},
\qquad
\alpha=\begin{bmatrix}\alpha_1\\ \alpha_2\\ \vdots\\ \alpha_M\end{bmatrix},
\qquad
e_1=\begin{bmatrix}1\\0\\ \vdots\\ 0\end{bmatrix},
\qquad
s_k=4^{-(k-1)}.
\]
Since the \(s_1,\dots,s_{M}\) values are distinct, \(A\) is nonsingular and the solution $\boldsymbol{\alpha}$ is unique. 

As $M$ increases, the Vandermonde system $A\boldsymbol{\alpha} = \mathbf{e}_1$ becomes ill-conditioned. Because of this, we derive a closed form solution for the components $\alpha_j$ of the vector $\boldsymbol{\alpha}$. Following the exposition presented in \cite{ascoli2018limitations,bach2021effectiveness}, we observe that 
\[
    A\boldsymbol{\alpha} = \mathbf{e}_1 \implies \boldsymbol{\alpha}^\top A^\top = \mathbf{e}_1^\top \quad \implies 
\boldsymbol{\alpha}^\top = \mathbf{e}_1^\top (A^\top)^{-1},
\]
so \(\boldsymbol{\alpha}^\top\) is the first row of \((A^\top)^{-1}\). To compute this first row, without computing the inverse of $A^\top$, we use Lagrange interpolation as done in \cite{ascoli2018limitations,bach2021effectiveness}. For any polynomial $P$, 
\[
    P(z) = a_0 + a_1 z + \cdots + a_{M-1} z^{M-1},
\]
we can collect its coefficients into the vector $\mathbf{a} = (a_0, \dots, a_{M-1})^\top$. 
Then, we have 
\[
A^\top \mathbf{a} =
\begin{bmatrix}
P(s_1)\\ P(s_2)\\ \vdots\\ P(s_M)
\end{bmatrix}.
\]
Rearranging terms, we get 
\[
    \boldsymbol{a} = (A^\top)^{-1}
    \begin{bmatrix}
        P(s_1)\\ P(s_2)\\ \vdots\\ P(s_M)
    \end{bmatrix}.
\]
For each $i,\,  1\leq i\leq M$, we choose \(P\) = \(P_i\) so that
\[
P_i(s_i) = 1,\qquad P_i(s_k) = 0 \quad (k \ne i),
\]
i.e., $P_i(s_k), 1\leq i,k\leq M$ satisfies the delta property. With this choice of the polynomials we have that $A^\top \mathbf{a}_i = \mathbf{e}_i$ where $\mathbf{a}_i = (a_{0i}, a_{1i}\dots, a_{(M-1)i})^\top$ is the coefficient vector for polynomial $P_i$. Thus,
\[
    \mathbf{a}_i = (A^\top)^{-1}\mathbf{e}_i,
\]
and $a_{0i}, 1\leq i\leq M$ is the element in the first row and $i$th column of the matrix $(A^\top)^{-1}$. In particular, it is also equal to the element $\alpha_{i}$ of the vector $\boldsymbol{\alpha}$. For \(i = 1\), using Lagrange interpolation, the polynomial $P_i$ is written explicitly as 
\[
P_1(z) = \frac{(z - s_2)(z - s_3)\cdots(z - s_M)}
{(s_1 - s_2)(s_1 - s_3)\cdots(s_1 - s_M)}.
\]
Since \((A^\top)^{-1} \mathbf{e}_1 = \mathbf{a}_{1}\) is the coefficient vector of \(P_1\), its constant term $\mathbf{a}_{01}$ is
\[
P_1(0) = a_{01} =  \alpha_1 = \frac{(-s_2)(-s_3)\cdots(-s_M)}
{(s_1 - s_2)(s_1 - s_3)\cdots(s_1 - s_M)}.
\]
By generalizing this idea, the $\alpha_k, 1\leq k\leq M$, coefficients can be computed as 
\begin{equation}
\label{alphas}
\alpha_k = P_k(0) = \prod_{\substack{m = 1 \\ m \ne k}}^M \frac{s_m}{s_m - s_k}=\prod_{\substack{m = 1 \\ m \ne k}}^M \frac{1}{1-4^{m-k}},
\end{equation}
with $s_k=4^{-(k-1)}$. 

Pairing Richardson extrapolation in \eqref{RE2} and \eqref{alphas} with a spatial discretization of order $2M$ yields a scheme that has $2M$ order accuracy in space and time and we denote such a scheme as the $(2M,2M)$ extrapolated trapezoidal FDTD scheme or the limiting Maxwell-Kerr-Lorentz model. 

\subsection{Fully Discrete $(2M, 2M)$ Extrapolated Trapezoidal FDTD Methods for Maxwell Kerr-Debye-Lorentz System}\label{2M2MKDL}

The local truncation error associated with the exponential time integrator \eqref{eq:chidiscreteb} for the nonlinear Debye equation \eqref{kerrdebye1d} does not, in general, contain only even powers of $\Delta t$. Consequently, the even-power temporal error expansion used to derive the Richardson extrapolation coefficients in \eqref{RE2} and \eqref{alphas} for the limiting Maxwell-Kerr-Lorentz system does not hold for the Maxwell-Kerr-Debye-Lorentz system. Building a fully discrete $(2M,2M)$ system in the case when the nonlinear Debye equation is included requires a modified Richardson extrapolation that takes into consideration both the even and odd powers of $\Delta t$, which we develop in this section. 

For each $M\in\mathbb{N}$, we apply Richardson extrapolation to the $(2,2M)$ trapezoidal FDTD schemes for the Maxwell-Kerr-Debye-Lorentz system. In contrast to the Maxwell-Kerr-Lorentz system, the temporal error expansion generally contains both even and odd powers of $\Delta t$. 

We compute the solution of the $(2,2M)$ trapezoidal FDTD schemes for the Maxwell-Kerr-Debye-Lorentz model $2M-1$ times using successively refined time steps $\Delta t,\Delta t/2,\ldots,\Delta t/2^{2M-2}$. For $n\geq0$, let the solution at the common time grid point $t^n$, computed with time step $\Delta t/2^{k-1}$, be denoted by $U_h^{n,k}=[E_h^{n,k},D_h^{n,k},H_h^{n,k},P_h^{n,k},J_h^{n,k},\chi_h^{n,k}]^\top$, for $k=1,\ldots,2M-1$. Thus, the computation with time step $\Delta t/2^{k-1}$ requires $2^{k-1}n$ time steps to reach the common physical time $t^n=n\Delta t$. Let $U(t^n)=[E(t^n),D(t^n),H(t^n),P(t^n),J(t^n),\chi(t^n)]^\top$ denote the exact solution of the Maxwell-Kerr-Debye-Lorentz model at $t^n$.

For $k=1,\ldots,2M-1$, assuming sufficient regularity of the exact solution and a full integer-power temporal error expansion, the global error in the numerical solution at $t^n$ is given by
\begin{align}
U_h^{n,k}
&=
U(t^n)
+
\sum_{\ell=2}^{2M-1}
C_{h,\ell}
\left(
\Delta t\,2^{-(k-1)}
\right)^\ell
+
\mathcal{O}(\Delta t^{2M})
+
\mathcal{O}(h^{2M})
\notag\\
&=
U(t^n)
+
\sum_{\ell=2}^{2M-1}
C_{h,\ell}\Delta t^\ell r_k^\ell
+
\mathcal{O}(\Delta t^{2M})
+
\mathcal{O}(h^{2M}),
\label{RE1-general}
\end{align}
where $r_k=2^{-(k-1)}$ for $k=1,\ldots,2M-1$, and $C_{h,\ell}=[C_{E,h,\ell},C_{D,h,\ell},C_{H,h,\ell},C_{P,h,\ell},C_{J,h,\ell},C_{\chi,h,\ell}]^T$ for $\ell=2,\ldots,2M-1$. The coefficients \(C_{h,\ell}\) depend on time derivatives of the components of the exact solution at \(t^n\) and on the spatial grid points, but are independent of \(\Delta t\) and \(k\). 

Following the standard Richardson extrapolation procedure
\cite[p.~183]{BurdenFairesBurden2015} and the coefficient construction
in \cite{ascoli2018limitations,bach2021effectiveness}, we seek coefficients
$\{\tilde{\alpha}_k\}_{k=1}^{2M-1}$ such that the extrapolated solution, denoted by
\begin{align}
\label{RE2-general}
U_{h,\mathrm{RE}}^{n}
&=
\sum_{k=1}^{2M-1}\tilde{\alpha}_k U_h^{n,k}
=
U(t^n)
+
\mathcal{O}(\Delta t^{2M})
+
\mathcal{O}(h^{2M}),
\end{align}
i.e., the temporal error terms of orders $\Delta t^2,\Delta t^3,\ldots,\Delta t^{2M-1}$ cancel. Combining \eqref{RE1-general} and \eqref{RE2-general}, we can show that these coefficients must satisfy
\begin{equation}
\sum_{k=1}^{2M-1}\tilde{\alpha}_k r_k^\ell
=
\delta_{\ell0}
=
\begin{cases}
1, & \ell=0,\\
0, & \ell=2,3,\ldots,2M-1,
\end{cases}
\label{eq:moment-conds-general}
\end{equation}
which gives the square linear system $A\boldsymbol{\tilde{\alpha}}=\mathbf{e}_1$, where $A\in\mathbb{R}^{(2M-1)\times(2M-1)}$ is a generalized Vandermonde matrix
\[
A=
\begin{bmatrix}
1 & 1 & \cdots & 1\\
r_1^2 & r_2^2 & \cdots & r_{2M-1}^2\\
r_1^3 & r_2^3 & \cdots & r_{2M-1}^3\\
\vdots & \vdots & \ddots & \vdots\\
r_1^{2M-1} & r_2^{2M-1} & \cdots & r_{2M-1}^{2M-1}
\end{bmatrix},
\qquad
\boldsymbol{\tilde{\alpha}}
=
\begin{bmatrix}
\tilde{\alpha}_1\\
\tilde{\alpha}_2\\
\vdots\\
\tilde{\alpha}_{2M-1}
\end{bmatrix},
\qquad
\mathbf{e}_1
=
\begin{bmatrix}
1\\
0\\
\vdots\\
0
\end{bmatrix},
\qquad
r_k=2^{-(k-1)}.
\]
The matrix $A$ differs from an ordinary Vandermonde matrix because the row containing the first powers of $r_k$ is absent. Since the values $r_1,\ldots,r_{2M-1}$ are distinct and positive, $A$ is nonsingular and the solution $\boldsymbol{\tilde{\alpha}}$ is unique.

As $M$ increases, the generalized Vandermonde system \cite[p.~99]{Gantmacher2000}, $A\boldsymbol{\tilde{\alpha}}=\mathbf{e}_1$ becomes increasingly ill-conditioned. Because of this, we derive a closed form for the components $\tilde{\alpha}_i$ of the vector $\boldsymbol{\tilde{\alpha}}$. In a manner similar to the previous section, we observe that
\[
A\boldsymbol{\tilde{\alpha}}=\mathbf{e}_1
\quad\Longrightarrow\quad
\boldsymbol{\tilde{\alpha}}^{\top}A^{\top}=\mathbf{e}_1^{\top}
\quad\Longrightarrow\quad
\boldsymbol{\tilde{\alpha}}^{\top}
=
\mathbf{e}_1^{\top}(A^{\top})^{-1},
\]
so $\boldsymbol{\tilde{\alpha}}^{\top}$ is the first row of $(A^{\top})^{-1}$. To determine this row without explicitly computing the inverse of $A^{\top}$, we consider polynomials of the form
\[
P(z)
=
a_0+a_2z^2+a_3z^3+\cdots+a_{2M-1}z^{2M-1},
\]
and collect their coefficients into the vector
$\mathbf{a}=(a_0,a_2,a_3,\ldots,a_{2M-1})^{\top}$. Then
\[
A^{\top}\mathbf{a}
=
\begin{bmatrix}
P(r_1)\\
P(r_2)\\
\vdots\\
P(r_{2M-1})
\end{bmatrix},
\]
and hence
\[
\mathbf{a}
=
(A^{\top})^{-1}
\begin{bmatrix}
P(r_1)\\
P(r_2)\\
\vdots\\
P(r_{2M-1})
\end{bmatrix}.
\]

For each $i$, $1\leq i\leq2M-1$, we choose $P=P_i$ from the polynomial space
$\mathcal P=\operatorname{span}\{1,z^2,z^3,\ldots,z^{2M-1}\}$ such that
\[
P_i(r_i)=1,
\qquad
P_i(r_k)=0
\quad (k\neq i),
\]
so that $P_i(r_k)=\delta_{ik}$ for $1\leq i,k\leq2M-1$. With this choice, $A^{\top}\mathbf{a}_i=\mathbf{e}_i$, where
$\mathbf{a}_i=(a_{0i},a_{2i},a_{3i},\ldots,a_{(2M-1)i})^{\top}$ is the coefficient vector of $P_i$. Thus,
\[
\mathbf{a}_i=(A^{\top})^{-1}\mathbf{e}_i.
\]
Hence, $a_{0i}$ is the element in the first row and $i$th column of $(A^{\top})^{-1}$. In particular,
\begin{align}
\label{coeff-general}
\tilde{\alpha}_i=a_{0i}=P_i(0),
\qquad
i=1,\ldots,2M-1.
\end{align}
To construct \(P_i\), we first define the ith Lagrange basis polynomial by
\begin{align}\label{Llang}
L_i(z)
=
\prod_{\substack{m=1\\m\neq i}}^{2M-1}
\frac{z-r_m}{r_i-r_m}.
\end{align}
This polynomial satisfies the Kronecker delta property, $L_i(r_k) = \delta_{ik},\, 1\leq i,k\leq 2M-1$.  However, \(L_i \notin \mathcal P\) since it generally contains a nonzero linear term. We therefore modify \(L_i\) while preserving its values at the nodes. Define the nodal polynomial
\begin{align}\label{Qlang}
Q(z)
=
\prod_{m=1}^{2M-1}(z-r_m).
\end{align}
Since $Q(r_k)=0,\, k=1,\ldots,2M-1,$ the polynomial
$
P_i(z)=L_i(z)+c_iQ(z)
$
satisfies the same interpolation conditions as \(L_i\), regardless of
the value of \(c_i\). Indeed,
\[
P_i(r_k)
=
L_i(r_k)+c_iQ(r_k)
=
L_i(r_k)
=
\delta_{ik}.
\]
It remains to choose \(c_i\) so that \(P_i\) has no linear term. This is
equivalent to requiring $P_i'(0)=0.$ With $c_i =- L'_i(0)/Q'_i(0)$, the required polynomial is
\begin{equation}
P_i(z)
=
L_i(z)
-
\frac{L_i'(0)}{Q'(0)}Q(z).
\label{eq:modified-lagrange}
\end{equation}
From \eqref{coeff-general}, one has 
\[
\tilde{\alpha}_i
=
P_i(0)
=
L_i(0)
-
\frac{L_i'(0)}{Q'(0)}Q(0).
\]
Using logarithmic differentiation on \eqref{Llang} and \eqref{Qlang}, and algebraic manipulation, we obtain  
\begin{equation}
\tilde{\alpha}_i
=
\frac{2^{i-1}}{2^{2M-1}-1}
\prod_{\substack{m=1\\m\neq i}}^{2M-1}
\frac{1}{1-2^{m-i}},
\qquad
i=1,\ldots,2M-1.
\label{eq:general-alphas}
\end{equation}
Pairing Richardson extrapolation in \eqref{RE2-general} and \eqref{eq:general-alphas} with a spatial discretization of order $2M$ yields a scheme that has $2M$th-order accuracy in space and time, and we denote such a scheme by the $(2M,2M)$ FDTD scheme for the Maxwell-Kerr-Debye-Lorentz model.

\subsection{Energy Bounds for Richardson Extrapolation}
\label{sec:energybounds}
In this section, we explore the stability of the extrapolated schemes. While we are not able to establish energy decay properties, we establish a uniform-in-time energy bound \cite{chen2025developing} for the $(2M,2M)$ extrapolated trapezoidal FDTD scheme applied to the limiting Maxwell-Kerr-Lorentz model as developed in Section~\ref{2M2M}.
Using the energy decay property of the underlying $(2,2M)$ trapezoidal FDTD scheme on the coarse mesh and the $M-1$ mesh refinements, the extrapolated solution inherits an unconditional energy boundedness estimate with a constant independent of the timestep, $\Delta t$. Here, by ``energy'', we mean a non-negative functional of the extrapolated solutions that has a similar form as the discrete energy of the underlying $(2,2M)$ trapezoidal FDTD methods.

For $k=1,\dots,M$, let $U^{n,k}_h = \bigl[E_h^{n,k},D_h^{n,k}, H_h^{n,k}, P_h^{n,k},J_h^{n,k}\bigr]$ denote the discrete solution of the $(2,2M)$ trapezoidal FDTD scheme applied to the limiting Maxwell-Kerr-Lorentz model at time $t^n$ computed with step size $\Delta t/2^{k-1}$. Using Richardson extrapolation in \eqref{RE2} and \eqref{alphas} the $(2M,2M)$ extrapolated solution components at time $t^{n+1}$ are given by
\[
\begin{aligned}
H_{h,RE}^{n+1} &= \sum_{k=1}^M \alpha_k\, H_h^{n+1,k}, 
&\quad E_{h,RE}^{n+1} &= \sum_{k=1}^M \alpha_k\, E_h^{n+1,k}, 
&\quad R_{h,RE}^{n+1}&= \sum_{k=1}^M \alpha_k\,\bigl(E_h^{n+1,k}\bigr)^2,\\
P_{h,RE}^{n+1} &= \sum_{k=1}^M \alpha_k\, P_h^{n+1,k}, 
&\quad J_{h,RE}^{n+1} &= \sum_{k=1}^M \alpha_k\, J_h^{n+1,k},
\end{aligned}
\]
with the $\alpha_k$ coefficients defined in \eqref{alphas}. We note that the extrapolation of the squares of electric field values for each refinement are not the same as the square of $E_{h,RE}^{n}$ and thus, we define a new extrapolated field, $R^n_{h,RE}$ defined at any time level $n \in\mathbb{N}$ as 
\begin{equation}
\label{eq:Esquare}
R^n_{h,RE} = \sum_{k=1}^M \alpha_k\,\bigl(E_h^{n,k}\bigr)^2 
\end{equation}. 

Next, we show that the $(2M,2M)$ extrapolated trapezoidal FDTD schemes for the Maxwell Kerr-Lorentz model satisfy a uniform-in-time energy bound independent of $\Delta t$ for all values of $M > 1$. 
\begin{theorem}
Let $\mathcal{E}_{h,\mathrm{Trap, Kerr}}^{n,k}$ be the discrete energy for the $(2,2M)$ trapezoidal FDTD scheme computed with time step $\Delta t/2^{k-1}$, $k=1,\dots,M$, applied to the limiting Maxwell-Kerr-Lorentz model defined in~\eqref{eq:discretenergyTrapKerr} and let
$\mathcal{E}_{h,\mathrm{RE}}^{n}$ be the energy of the extrapolated
$(2M,2M)$ solution at time $t^n$ defined as 
\begin{align}\label{REEng}
   \hspace*{-8pt} \mathcal{E}_{h,\mathrm{RE}}^{n} := \frac{1}{2}\Biggl(\mu_0 \| H_{h,RE}^{n} \|_{\frac{1}{2},h}^2 + \epsilon_0 \epsilon_\infty \left\|E_{h,RE}^n\right\|^2_{0,h} + \frac{3a\epsilon_0}{2}\|R_{h,RE}^n\|^2_{0,h} + \frac{\epsilon_0 \omega_0^2}{\omega_p^2}\|P^n_{h,RE}\|^2_{0,h} + \frac{\epsilon_0}{\omega_p^2}\|J^n_{h,RE}\|^2_{0,h}\Biggr).
\end{align}
Assume that \eqref{eq:discretenergyTrapKerr} satisfies $ \mathcal{E}_{h,\mathrm{Trap, Kerr}}^{n+1,k}
\le \mathcal{E}_{h,\mathrm{Trap, Kerr}}^{n,k},\, \forall \, n\ge 0, k=1,\ldots M$. Then, for all $n\ge0$, we have 
\begin{equation}
0 \le \mathcal{E}_{h,\mathrm{RE}}^{n}
\le C_0 \sum_{k=1}^M \mathcal{E}_{h,\mathrm{Trap, Kerr}}^{0,k},
\end{equation}
with 
\begin{align}\label{C0}
    C_0 = \sum_{k=1}^M \alpha_k^2, \quad \alpha_k = \prod_{\substack{m = 1 \\ m \ne k}}^M \frac{s_m}{s_m - s_k},
\quad s_k = 4^{-(k-1)},
\end{align}
depending only on $M$.  

\end{theorem}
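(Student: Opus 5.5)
The bound will follow from Cauchy--Schwarz applied termwise to each quadratic component of $\mathcal{E}_{h,\mathrm{RE}}^n$, together with the assumed monotonicity of each underlying energy. Nonnegativity $\mathcal{E}_{h,\mathrm{RE}}^n\ge 0$ is immediate, since \eqref{REEng} is a sum of squared norms with positive coefficients ($\mu_0,\epsilon_0,\epsilon_\infty,a,\omega_0^2,\omega_p^2>0$).

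For the upper bound, take any component, for example $H$. By the triangle inequality in $V_{\frac12,h}$ and then the Cauchy--Schwarz inequality in $\mathbb{R}^M$,
\begin{equation*}
\|H_{h,RE}^n\|_{\frac12,h}^2=\Bigl\|\sum_{k=1}^M\alpha_k H_h^{n,k}\Bigr\|_{\frac12,h}^2\le\Bigl(\sum_{k=1}^M|\alpha_k|\,\|H_h^{n,k}\|_{\frac12,h}\Bigr)^2\le\Bigl(\sum_{k=1}^M\alpha_k^2\Bigr)\sum_{k=1}^M\|H_h^{n,k}\|_{\frac12,h}^2 .
\end{equation*}
The same estimate applies verbatim to $E_{h,RE}^n$, $P_{h,RE}^n$ and $J_{h,RE}^n$ in $V_{0,h}$. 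It also applies to the new field $R_{h,RE}^n=\sum_k\alpha_k(E_h^{n,k})^2$, which gives $\|R_{h,RE}^n\|_{0,h}^2\le C_0\sum_k\|(E_h^{n,k})^2\|_{0,h}^2$. This is exactly why $R_{h,RE}^n$ was introduced instead of $(E_{h,RE}^n)^2$: it is linear in the quantities $(E_h^{n,k})^2$ that appear in the underlying energies, so the quartic term is handled in the same way as the quadratic ones.

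Multiplying each estimate by its weight from \eqref{REEng} and summing gives
\begin{equation*}
\mathcal{E}_{h,\mathrm{RE}}^n\le C_0\sum_{k=1}^M\mathcal{E}_{h,\mathrm{Trap,Kerr}}^{n,k},
\end{equation*}
with $C_0=\sum_k\alpha_k^2$ as in \eqref{C0}. Then iterate the hypothesis $\mathcal{E}_{h,\mathrm{Trap,Kerr}}^{n+1,k}\le\mathcal{E}_{h,\mathrm{Trap,Kerr}}^{n,k}$, which holds unconditionally by \eqref{eq:discreteenergyTrapKerr}, to get $\mathcal{E}_{h,\mathrm{Trap,Kerr}}^{n,k}\le\mathcal{E}_{h,\mathrm{Trap,Kerr}}^{0,k}$. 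Here $n$ counts coarse steps, which correspond to $2^{k-1}n$ fine steps for run $k$, and monotonicity along fine steps implies it at coarse levels. This yields the claim. Finally, $C_0$ depends only on $M$ because by \eqref{alphas} the $\alpha_k$ depend only on $s_k=4^{-(k-1)}$, not on $\Delta t$ or $h$.

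The main obstacle is one of bookkeeping: the energy in \eqref{eq:discretenergyTrapKerr} must match \eqref{REEng} term by term. In particular, \eqref{eq:discretenergyTrapKerr} writes $\|\overline{H_h}^n\|$ where \eqref{eq: 30} writes $\|H_h^n\|$. I would interpret the trapezoidal Kerr energy with $\|H_h^{n}\|_{\frac12,h}^2$, consistent with \eqref{eq: 30}, since the trapezoidal scheme is not staggered in time. I would also check that the coefficient $\tfrac{3a\epsilon_0}{2}$ on the quartic term is the same in both energies, so that the constant remains exactly $C_0$ with no extra factor.
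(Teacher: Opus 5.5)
Your proposal is correct and follows the paper's proof step for step. The paper applies the triangle inequality and Cauchy--Schwarz to each extrapolated component, including $R_{h,RE}^n$, sums with the energy weights to get $\mathcal{E}_{h,\mathrm{RE}}^n\le C_0\sum_k\mathcal{E}_{h,\mathrm{Trap,Kerr}}^{n,k}$, and then iterates the assumed monotonicity. Your handling of the quartic term is actually cleaner than the paper's: the paper rewrites $\|(E_h^{n,k})^2\|_{0,h}^2$ as $\|E_h^{n,k}\|_{0,h}^4$, which is not an identity for the discrete norm, whereas keeping $\|(E_h^{n,k})^2\|_{0,h}^2$ as you do matches the energy \eqref{eq:discretenergyTrapKerr} exactly.
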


\begin{proof}
For $n \geq 0, M > 1$, let the solution of the $(2,2M)$ trapezoidal FDTD schemes at time grid point $t^n$, computed with time step $\Delta t/2^{k-1}$ be denoted as  $U_{h}^{n,k} = [E_{h}^{n,k}, D_{h}^{n,k}, H_{h}^{n,k}, P_{h}^{n,k}, J_{h}^{n,k}]^T$. Let $u_h^{n,k}$, $1\leq k\leq M$, denote any solution component in $U_h^{n,k}$ and $u_{h,RE}^n$ denote the corresponding extrapolated solution on either the primal or dual grid. For $s \in \{0,\frac{1}{2}\}$, by the triangle inequality and the Cauchy--Schwarz inequality,
\[
\|u_{h,RE}^n\|_{s,h}
\le \sum_{k=1}^M |\alpha_k|\,\|u_h^{n,k}\|_{s,h}
\le \Bigl(\sum_{k=1}^M \alpha_k^2\Bigr)^{\!\frac12}
    \Bigl(\sum_{k=1}^M \|u_h^{n,k}\|_{s,h}^2\Bigr)^{\!\frac12}
\implies
\|u_{h,RE}^n\|_{s,h}^2
\le C_0\sum_{k=1}^M \|u_h^{n,k}\|_{s,h}^2.
\]
where $C_0$ is defined as in \eqref{C0}. We now apply this generic estimate to each extrapolated field at time level $n$. By construction,
\[
\begin{aligned}
H_{h,RE}^{n}
&= \sum_{k=1}^M \alpha_k\, H_h^{n,k}
\implies
\|H_{h,RE}^{n}\|_{\frac{1}{2},h}^2
\le C_0\sum_{k=1}^M \bigl\|H_h^{n,k}\bigr\|_{\frac{1}{2},h}^2,\\
E_{h,RE}^{n}
&= \sum_{k=1}^M \alpha_k\, E_h^{n,k}
\implies
\|E_{h,RE}^{n}\|_{0,h}^2
\le C_0\sum_{k=1}^M \bigl\|E_h^{n,k}\bigr\|_{0,h}^2,\\
P_{h,RE}^{n}
&= \sum_{k=1}^M \alpha_k\, P_h^{n,k}
\implies
\|P_{h,RE}^{n}\|_{0,h}^2
\le C_0\sum_{k=1}^M \bigl\|P_h^{n,k}\bigr\|_{0,h}^2,\\
J_{h,RE}^{n}
&= \sum_{k=1}^M \alpha_k\, J_h^{n,k}
\implies
\|J_{h,RE}^{n}\|_{0,h}^2
\le C_0\sum_{k=1}^M \bigl\|J_h^{n,k}\bigr\|_{0,h}^2,\\
R_{h,RE}^{n}
&= \sum_{k=1}^M \alpha_k\,\bigl(E_h^{n,k}\bigr)^2
\implies
\|R_{h,RE}^{n}\|_{0,h}^2\le C_0\sum_{k=1}^M 
        \bigl\|\bigl(E_h^{n,k}\bigr)^2\bigr\|_{0,h}^2
= C_0\sum_{k=1}^M \bigl\|E_h^{n,k}\bigr\|_{0,h}^4.
\end{aligned}
\]
Substituting these into \eqref{REEng}, we obtain
\begin{align*}
\mathcal{E}_{h,\mathrm{RE}}^{n} 
&= \frac{1}{2}\Biggl(
\mu_0 \| H_{h,RE}^{n} \|_{\frac{1}{2},h}^2 
+ \epsilon_0 \epsilon_\infty \|E_{h,RE}^{n}\|^2_{0,h} 
+ \frac{3\epsilon_0 a}{2}\bigl\|R_{h,RE}^{n}\bigr\|_{0,h}^2  
+ \frac{\epsilon_0 \omega_0^2}{\omega_p^2}\|P_{h,RE}^{n}\|^2_{0,h}
+ \frac{\epsilon_0}{\omega_p^2}\|J_{h,RE}^{n}\|^2_{0,h}
\Biggr) \\[0.5em]
&\le C_0 \sum_{k=1}^M
\frac{1}{2}\Biggl[
\mu_0 \bigl\|H_h^{n,k}\bigr\|_{\frac{1}{2},h}^2
+ \epsilon_0 \epsilon_\infty \bigl\|E_h^{n,k}\bigr\|^2_{0,h} 
+ \frac{3\epsilon_0 a}{2}\bigl\|E_h^{n,k}\bigr\|_{0,h}^4  
+ \frac{\epsilon_0 \omega_0^2}{\omega_p^2}\bigl\|P_h^{n,k}\bigr\|^2_{0,h}
+ \frac{\epsilon_0}{\omega_p^2}\bigl\|J_h^{n,k}\bigr\|^2_{0,h}
\Biggr] \\[0.5em]
&= C_0 \sum_{k=1}^M \mathcal{E}_{h,\mathrm{Trap, Kerr}}^{n,k}.
\end{align*}
Using equation \eqref{eq:discretenergyTrapKerr} for each $k=1,\dots,M$, we finally obtain
\begin{align*}
0 \leq \mathcal{E}_{h,\mathrm{RE}}^{n} 
&\leq C_0\sum_{k=1}^M \mathcal{E}_{h,\mathrm{Trap, Kerr}}^{n,k}
\leq C_0 \sum_{k=1}^M \mathcal{E}_{h,\mathrm{Trap, Kerr}}^{n-1,k}
\le C_0 \sum_{k=1}^M \mathcal{E}_{h,\mathrm{Trap, Kerr}}^{0,k}.
\end{align*}
\end{proof}

\begin{remark}
We note that while the energy bound established above does not imply
monotone energy decay, the numerical results in Fig.~\ref{fig:energy}(c)
show that the energy of the $(2M,2M)$ extrapolated trapezoidal FDTD schemes, as defined in \eqref{REEng}, decays
monotonically in time, with behavior nearly indistinguishable from that
of the $(2,2M)$ trapezoidal FDTD schemes. 

The Richardson extrapolation coefficients and the corresponding
values of $C_0$ for $M=1,\dots,5$ are given in Table~\ref{tab:richardson-C0}.
\begin{table}
\caption{The coefficient $C_0$ for varying values of $M$.}
\label{tab:richardson-C0}
\begin{equation}
\begin{array}{c|c|c}
M & \{\alpha_k\}_{k=1}^M & C_0 \\ [0.3em] \hline
1
& \{1\}
& 1
\\[0.3em]

2
& \left\{-\dfrac{1}{3},\dfrac{4}{3}\right\}
& 1.8889
\\[0.7em]

3
& \left\{\dfrac{1}{45},-\dfrac{4}{9},\dfrac{64}{45}\right\}
&  2.2207
\\[0.7em]

4
& \left\{-\dfrac{1}{2835},\dfrac{4}{135},
-\dfrac{64}{135},\dfrac{4096}{2835}\right\}
& 2.3131
\\[0.7em]

5
& \left\{\dfrac{1}{722925},-\dfrac{4}{8505},
\dfrac{64}{2025},-\dfrac{4096}{8505},
\dfrac{1048576}{722925}\right\}
& 2.3368
\\[0.7em]

\end{array}
\end{equation}
\end{table}
Thus, $C_0$ increases with $M$ for the values shown, but the increase
becomes progressively smaller as $M$ grows. Moreover, $C_0$ depends only
on $M$ and is independent of both $\Delta t$ and the time level $n$.

For the Maxwell--Kerr--Debye--Lorentz system, we also investigated
energy bounds for the corresponding $(2M,2M)$ extrapolated trapezoidal FDTD schemes constructed in Section \ref{2M2MKDL}. In contrast to the uniform in time energy bound for the limiting Maxwell-Kerr-Lorentz case, the nonlinear energy term for the Debye term, $\langle \chi_h,(E_h)^2\rangle_{0,h}$, requires a discrete
inverse estimate to produce a bound on a corresponding energy, which introduces a factor depending on the spatial
mesh size $h$. Consequently, the resulting bound on the energy is not uniform with
respect to the mesh parameters, and we do not present it here.
Nevertheless, the numerical experiments in Fig.~\ref{fig:energy}(c)
show monotone energy decay for the $(2M,2M)$ extrapolated trapezoidal FDTD schemes
with the Debye relaxation retained.
\end{remark}

\subsection{Asymptotic and Positivity Preservation Using Richardson Extrapolation}
In this section, we discuss what asymptotic preservation (AP) means for the high order $(2M,2M)$ extrapolated trapezoidal FDTD schemes based on Richardson extrapolation and we discuss positivity preservation (PP) in these schemes. We note that the extrapolation methods developed in Sections \ref{2M2M} and \ref{2M2MKDL} are different. More precisely, they use a different number of refinements: $M-1$ for the Kerr-Lorentz model and $2(M-1)$ for the Kerr-Debye-Lorentz model. Thus,  they have different coefficients $\alpha_k$ in the extrapolation and we cannot directly compare a limiting version of the schemes for the Maxwell Kerr-Debye-Lorentz model in Section \ref{2M2MKDL} to those constructed in Section \ref{2M2M} for the Maxwell Kerr-Lorentz model.  

Since, the $(2,2M)$ trapezoidal FDTD schemes for the Maxwell-Kerr-Debye-Lorentz system satisfy the asymptotic-preserving property, as $\varepsilon \to 0$, the update equation \eqref{eq:chidiscreteb} reduces to give $\chi_h^{n}\to (E_h^{n})^2$, so that each discrete solution $\chi_h^{n,k}$ for $k = 1,\ldots, 2M-1$, appearing in the extrapolation $\chi_{h,RE}^{n} = \sum_{k=1}^{2M-1} \tilde{\alpha}_k \chi_{h}^{n,k}$ in \eqref{RE2-general} for $\chi$ approaches the corresponding discrete solution $(E_h^{n,k})^2$ of the limiting Maxwell--Kerr--Lorentz system. Since the extrapolation coefficients $\tilde{\alpha}_k$ defined in \eqref{eq:general-alphas} are independent of $\varepsilon$, the limit $\varepsilon\to0$ may be taken term by term in \eqref{RE2-general} and we get $\chi_{h,RE}^{n} \rightarrow \sum_{k=1}^{2M-1} \tilde{\alpha}_k (E_{h}^{n,k})^2$. This is analogous to the quantity defined as $R^{n}_{h,RE}$ in \eqref{eq:Esquare} in Section \ref{sec:energybounds} which sums $M$ terms. If a corresponding $(2M,2M)$ extrapolated trapezoidal FDTD scheme based on Richardson extrapolation using $2(M-1)$ mesh refinements was constructed for the limiting Maxwell-Kerr-Lorentz model, $\chi^n_{h,RE} \rightarrow \tilde{R}^n_{h,RE}:=\sum_{k=1}^{2M-1} \tilde{\alpha}_k (E_{h}^{n,k})^2$ as $\varepsilon \rightarrow 0$. In this sense, the extrapolated methods defined in Section \ref{2M2MKDL} are asymptotic preserving. 

The positivity-preserving property of the underlying $(2,2M)$ schemes may not, in general, be inherited by the Richardson extrapolated $(2M,2M)$ scheme. By Theorem~\ref{eq:positivitydiscrete}, discrete susceptibilities computed on any mesh refinement satisfies $\chi_h^{n,k}\geq0$ provided the initial susceptibility is nonnegative. However, the extrapolated susceptibility in \eqref{RE2-general} is a linear combination  $\chi_{h,\mathrm{RE}}^n=\sum_{k=1}^{2M-1}\tilde{\alpha}_k\chi_h^{n,k}$, in which the coefficients $\tilde{\alpha}_k$ defined in \eqref{eq:general-alphas} are not all nonnegative for $M\geq2$. Consequently, even when $\chi_h^{n,k}\geq0$ for every underlying solution, the extrapolated susceptibility $\chi_{h,\mathrm{RE}}^n$ may not be nonnegative. Thus, positivity preservation cannot, in general, be guaranteed for the $(2M,2M)$ extrapolated schemes. Nevertheless, we observe positivity preservation of the extrapolated schemes in some of the numerical experiments presented in Section~\ref{sec:numerics}.

\section{Numerical Implementation}\label{sec:implementation}
In this section, we provide a detailed numerical implementation of the $(2,2M)$ methods developed in Sections~\ref{22MLF}, \ref{22MTraps}, and the $(2M,2M)$ methods developed in Sections \ref{2M2M} and \ref{2M2MKDL}.

\subsection{Implementation of $(2,2M)$ Leapfrog FDTD Methods}

In this section, we discuss the implementation of the fully discrete $(2,2M)$ leapfrog FDTD methods presented in \eqref{leapfrog} which are applied to a non-dimensional version of the systems \eqref{eq:kerr-debyelorentz1da} and \eqref{eq:kerr-debyelorentz1db}. All the coefficients appearing in the equations below are collected together and defined below in \eqref{coeff}.

First, we compute $H_{j+\frac{1}{2}}^{n+\frac{1}{2}}$, and $D^{n+1}_j$ from \eqref{eq:faradaydiscreteLF}, and \eqref{eq:amperediscreteLF}, respectively. Next, we solve \eqref{eq:densitydiscreteLF} for $P^{n+1}_j$ and substitute the result into \eqref{eq:lorentzdiscreteLF} to obtain an equation for $J^{n+1}_j$. Next, we solve for $\chi^{n+1}_j$ in \eqref{eq:chidiscreteLF}. Substituting $\chi^{n+1}_j$ and $P^{n+1}_j$ into \eqref{algcontlawLF} produces a discrete nonlinear equation for $E^{n+1}_j$. We define
\begin{align}
    A_{0,j}^n
    &= D_j^{n+1} - b_1 P_j^n - b_2 J_j^n
    - \frac{\Delta t a_5}{2} E_j^n, \quad
    A_{1,j}^n
    = b_0 + a a_0 \chi_j^n + a a_2(E_j^n)^2,
\end{align}
where $A_{0,j}^n$ and $A_{1,j}^n$ contain quantities that are known when solving for $E_j^{n+1}$. The nonlinear equation can then be written as the root finding problem
\begin{align}
     F\left(E^{n+1}_j\right)
     &= A_{0,j}^n
     - A_{1,j}^n E_j^{n+1}
     - a a_1\left(E_j^{n+1}\right)^3
     =0.
     \label{residualLF}
\end{align}
The derivative of the residual is
\begin{equation}
    F'\left(E_j^{n+1}\right)
    =
    -A_{1,j}^n
    -3a a_1\left(E_j^{n+1}\right)^2.
\end{equation}
Employing Newton's method, let $E_j^{n+1,(k)}$ denote the $k$th Newton iterate for $E_j^{n+1}$. We use $E_j^{n+1,(0)}=E_j^n$ as the initial iterate and compute
\begin{equation}\label{newtonLF}
     E_j^{n+1, (k+1)} = E_j^{n+1, (k)} - \frac{F\left(E_j^{n+1, (k)}\right)}{F'\left(E_j^{n+1, (k)}\right)}.
\end{equation}
If $K_j \in \mathbb{N}$ denotes the index of the final Newton iterate, the iteration is terminated when
\[
\left|
\frac{E^{n+1,(K_j)}_j-E^{n+1,(K_j-1)}_j}
{10^{-5}+|E^{n+1,(K_j-1)}_j|}
\right|<10^{-8}.
\]
The converged solution is then assigned as $E_j^{n+1}=E_j^{n+1,(K_j)}$ and is used in the subsequent updates of $J_j^{n+1}$, $P_j^{n+1}$, and $\chi_j^{n+1}$.

Assuming that the grid functions $H^{n-\frac{1}{2}}_{j+\frac{1}{2}}, D^n_j, E^n_j, J^n_j, P^n_j, \text{ and } \chi^n_j$ have been computed, the update equations for the leapfrog scheme proceed as follows:
\begin{subequations}
    \begin{align}
        H_{j+\frac{1}{2}}^{n+\frac{1}{2}} &= H_{j+\frac{1}{2}}^{n-\frac{1}{2}} + \Delta t \left( \mathcal{D}_{h}^{(2M)} E_h^n\right)_{j+\frac{1}{2}}, \\
        D^{n+1}_j &= D^{n}_j + \Delta t \left( \widetilde{\mathcal{D}}_{h}^{(2M)} H_h^{n+\frac{1}{2}} \right)_j,\\
        E_j^{n+1,(0)} &= E_j^n,\\
        E_j^{n+1, (k+1)} &= E_j^{n+1, (k)} - \frac{F\left(E_j^{n+1, (k)}\right)}{F'\left(E_j^{n+1, (k)}\right)}, 0\leq k\leq K_j-1\\
        E_j^{n+1} &= E_j^{n+1,(K_j)},\\
        J^{n+1}_j &= a_3 \,J^{n}_j + a_4 \, P^n_j + a_5 \left(E^{n+1}_j + E^n_j\right),\\
        P^{n+1}_j &= P_j^n + \frac{\Delta t}{2}\left(J^{n+1}_j + J^n_j\right), \\
        \chi^{n+1}_j &= a_0 \, \chi^{n}_j + a_1 (E^{n+1}_j)^2 + a_2 (E^n_j)^2.
    \end{align}
\end{subequations}
The coefficients in the preceding equations are explicitly given by 
\begin{align}
    a_0 &= \frac{e^{- \frac{\Delta t}{\varepsilon}}}{\frac{1}{2} + \frac{1}{2} e^{-\frac{\Delta t}{\varepsilon}} + \frac{\Delta t}{2\varepsilon} e^{-\frac{\Delta t}{2\varepsilon}}}, \qquad
    a_1 = \frac{\frac{1}{2} \left( 1 - e^{-\frac{\Delta t}{\varepsilon}} \right)}{\frac{1}{2} + \frac{1}{2} e^{-\frac{\Delta t}{\varepsilon}} + \frac{\Delta t}{2\varepsilon} e^{-\frac{\Delta t}{2\varepsilon}}}, \qquad
    a_2 = \frac{\frac{\Delta t}{2\varepsilon} e^{-\frac{\Delta t}{2\varepsilon}}}{\frac{1}{2} + \frac{1}{2} e^{-\frac{\Delta t}{\varepsilon}} + \frac{\Delta t}{2\varepsilon} e^{-\frac{\Delta t}{2\varepsilon}}} \notag \\
    a_3 &= \frac{4\tau - 2\Delta t - \omega_0^2 \Delta t^2 \tau}{4 \tau + 2\Delta t + \omega_0^2 \Delta t^2 \tau}, \;\; \qquad
    a_4 = -\frac{4\omega_0^2 \Delta t \tau}{4 \tau + 2\Delta t + \omega_0^2 \Delta t^2 \tau}, \qquad
    a_5 = \frac{2\omega_p^2\Delta t \tau }{4 \tau + 2\Delta t + \omega_0^2 \Delta t^2 \tau} \notag\\
    b_0 &= \epsilon_\infty + \frac{\Delta t a_5}{2}, \hspace{7em}
    b_1 = 1 + \frac{\Delta t a_4}{2}, \hspace{7em}
    b_2 = \frac{\Delta t}{2} + \frac{\Delta t a_3}{2}.
    \label{coeff}
\end{align}
In our simulations, for every $M$ considered, the Newton solver converged within at most four iterations.

\subsection{Implementation of $(2,2M)$ Trapezoidal FDTD Methods}
In this section, we discuss the implementation of the fully discrete $(2,2M)$ trapezoidal FDTD methods presented in \eqref{eq:discretelorentzT} which are applied to a non-dimensional version of the systems \eqref{eq:kerr-debyelorentz1da} and \eqref{eq:kerr-debyelorentz1db}. All the coefficients appearing in the equations below are collected together and defined in \eqref{coeff}.

From \eqref{eq:faradayb} and \eqref{eq:ampereb}, we obtain 
\begin{align}\label{eq:DeqnTrap}
    D_{j}^{n+1} &= D_{j}^{n} + \Delta t \left( \widetilde{\mathcal{D}}_{h}^{(2M)} H_h^n \right)_j + \frac{\Delta t^2}{4} \left(\widetilde{\mathcal{D}}_{h}^{(2M)} \mathcal{D}_{h}^{(2M)}\left(E_h^{n+1} + E_h^{n}\right)\right)_j,
\end{align}
with $\widetilde{\mathcal{D}}_{h}^{(2M)} \mathcal{D}_{h}^{(2M)} = \dfrac{\partial^2}{\partial x^2} + \mathcal{O}(h^{2M})$. We next solve \eqref{eq:densitydiscretebTP} for $P^{n+1}_j$ and substitute the result into \eqref{eq:lorentzdiscreteTP} to obtain an equation for $J^{n+1}_j$. We also solve for $\chi^{n+1}_j$ in \eqref{eq:chidiscreteb}. Substituting $\chi^{n+1}_j$ and $P^{n+1}_j$ into \eqref{algcontlawT}, and combining the result with \eqref{eq:DeqnTrap}, provides a discrete nonlinear system for the grid function $E^{n+1}_h$.

We define
\begin{align}
    A_{0,j}^n
    &= D_j^n
    + \Delta t \left(\widetilde{\mathcal{D}}_{h}^{(2M)}H_h^n\right)_j
    + \frac{\Delta t^2}{4}
    \left(\widetilde{\mathcal{D}}_{h}^{(2M)}
    \mathcal{D}_{h}^{(2M)}E_h^n\right)_j
    - b_1P_j^n
    - b_2J_j^n
    - \frac{\Delta t a_5}{2}E_j^n, \notag\\
    A_{1,j}^n
    &= b_0 + a a_0\chi_j^n + a a_2(E_j^n)^2,
\end{align}
where $A_{0,j}^n$ and $A_{1,j}^n$ contain quantities that are known when solving for the grid function $E_h^{n+1}$. The $j$th component of the nonlinear system for $E_h^{n+1}$ can then be written as the root finding problem
\begin{align}
    F_j\left(E_h^{n+1}\right)
    &=
    A_{0,j}^n
    + \frac{\Delta t^2}{4}
    \left(
    \widetilde{\mathcal{D}}_{h}^{(2M)}
    \mathcal{D}_{h}^{(2M)}E_h^{n+1}
    \right)_j
    - A_{1,j}^n E_j^{n+1}
    - a a_1\left(E_j^{n+1}\right)^3
    =0.
    \label{newtonstep}
\end{align}
The Jacobian matrix $JF\left(E_h^{n+1}\right)$ is defined componentwise by
\begin{align}\label{eq:jacobi}
    \left[JF\left(E_h^{n+1}\right)\right]_{ij}
    &=
    \begin{cases}
    \displaystyle
    \frac{\partial F_i\left(E_j^{n+1}\right)}
    {\partial E_j^{n+1}},
    & \text{if } |i-j|\le M, \\[1em]
    0,
    & \text{otherwise,}
    \end{cases}
\end{align}
and is banded with stencil width $2M+1$.

Employing Newton's method, let $E_h^{n+1,(k)}$ denote the $k$th Newton iterate for $E_h^{n+1}$. We use $E_h^{n+1,(0)}=E_h^n$ as the initial iterate and compute
\begin{align}\label{eq:Eupdate}
    E_h^{n+1,(k+1)}
    =
    E_h^{n+1,(k)}
    -
    \left(JF\left(E_h^{n+1,(k)}\right)\right)^{-1}
    F\left(E_h^{n+1,(k)}\right).
\end{align}
If $K \in \mathbb{N}$ denotes the index of the final Newton iterate, the iteration is terminated when
\begin{align}\label{stopcrit}
\max_j
\left|
\frac{
E_j^{n+1,(K)}-E_j^{n+1,(K-1)}
}{
10^{-5}+|E_j^{n+1,(K-1)}|
}
\right|
<10^{-8}.
\end{align}
The converged solution is then assigned as $E_h^{n+1}=E_h^{n+1,(K)}$ and is used in the subsequent updates of $H_h^{n+1}$, $D_h^{n+1}$, $J_h^{n+1}$, $P_h^{n+1}$, and $\chi_h^{n+1}$.

Assuming that the grid functions $E^n_h, H^{n}_h, D^n_h, J^n_h, P^n_h, \text{ and } \chi^n_h$ have been computed, the update equations for the scheme proceed as follows.
\begin{subequations}\label{imptrap}
    \begin{align}
        E_h^{n+1,(0)} &= E_h^n,\\
        E_h^{n+1,(k+1)}
        &= E_h^{n+1,(k)}
        - \left(JF\left(E_h^{n+1,(k)}\right)\right)^{-1}
        F\left(E_h^{n+1,(k)}\right), 0\leq k\leq K-1\\
        E_h^{n+1} &= E_h^{n+1,(K)},\\
        H_{j+\frac{1}{2}}^{n+1}
        &= H_{j+\frac{1}{2}}^{n}
        + \frac{\Delta t}{2}
        \left(
        \left(\mathcal{D}_{h}^{(2M)}E_h^{n+1}\right)_{j+\frac{1}{2}}
        +
        \left(\mathcal{D}_{h}^{(2M)}E_h^n\right)_{j+\frac{1}{2}}
        \right),\\
        D^{n+1}_j
        &= D_{j}^{n}
        + \frac{\Delta t}{2}
        \left(
        \left(\widetilde{\mathcal{D}}_{h}^{(2M)}H_h^{n+1}\right)_j
        +
        \left(\widetilde{\mathcal{D}}_{h}^{(2M)}H_h^n\right)_j
        \right),\\
        J^{n+1}_j
        &= a_3 J^{n}_j
        + a_4 P^n_j
        + a_5\left(E^{n+1}_j+E^n_j\right),\\
        P^{n+1}_j
        &= P_j^n
        + \frac{\Delta t}{2}
        \left(J^{n+1}_j+J^n_j\right),\\
        \chi^{n+1}_j
        &= a_0\chi^{n}_j
        + a_1(E^{n+1}_j)^2
        + a_2(E^n_j)^2.
    \end{align}
\end{subequations}
The coefficients are defined as in \eqref{coeff}. In our simulations, for every $M$ considered, the Newton solver converged within at most three iterations.

\subsection{Implementation of $(2M,2M)$ Extrapolated Trapezoidal FDTD Methods}

We now describe the implementation of the $(2M,2M)$ extrapolated trapezoidal FDTD schemes for both the limiting Maxwell-Kerr-Lorentz system (denoted by KL), and the Maxwell-Kerr-Debye-Lorentz system (denoted by KDL), developed in Sections \ref{2M2M}, and \ref{2M2MKDL}, respectively. 

For the $k$th refinement level, let $\Delta t_k=\Delta t/2^{k-1}$. Over the coarse time interval $[t^n,t^{n+1}]$, the $(2,2M)$ scheme with time step $\Delta t_k$ is advanced through $2^{k-1}$ consecutive substeps. We denote the corresponding substep times by $t_{k,\ell}^n=t^n+\ell\Delta t_k$, for $\ell=0,\ldots,2^{k-1}$, so that $t_{k,0}^n=t^n$ and $t_{k,2^{k-1}}^n=t^{n+1}$. Thus, all refinement levels reach the same physical time $t^{n+1}$.

At each refined substep, the nonlinear electric-field equation is solved using Newton's method. Let $F_{S,k}$ denote the nonlinear residual for the model $S\in\{\mathrm{KL},\mathrm{KDL}\}$ corresponding to the time step $\Delta t_k$, and let $E_{S,h}^{n,k,\ell+1,(r)}$ denote the $r$th Newton iterate for the electric field at the $(\ell+1)$st substep. The Newton iteration is
\begin{align}
E_{S,h}^{n,k,\ell+1,(r+1)}
&=
E_{S,h}^{n,k,\ell+1,(r)}
-
\left[
JF_{S,k}\left(
E_{S,h}^{n,k,\ell+1,(r)}
\right)
\right]^{-1}
F_{S,k}\left(
E_{S,h}^{n,k,\ell+1,(r)}
\right).
\label{eq:RE-Newton}
\end{align}
The initial iterate at each substep is taken to be the electric field from the preceding substep, $E_{S,h}^{n,k,\ell+1,(0)}=E_{S,h}^{n,k,\ell}$, and the same relative-increment stopping criterion \eqref{stopcrit} is used. If $K_{S,k,\ell+1}$ denotes the index of the final Newton iterate, we set $E_{S,h}^{n,k,\ell+1}=E_{S,h}^{n,k,\ell+1,(K_{S,k,\ell+1})}$. This converged electric field is used to update the remaining discrete fields at the $(\ell+1)$st substep, and the resulting solution becomes the initial state for the next refined substep.

For the Maxwell-Kerr-Debye-Lorentz system, $F_{\mathrm{KDL},k}$, and the Jacobian $JF_{\mathrm{KDL},k}$, is obtained from the residual \eqref{newtonstep}, and the definition \eqref{eq:jacobi}, respectively, by replacing $\Delta t$ with $\Delta t_k$. Accordingly, all coefficients in \eqref{coeff} that depend on the time step are recomputed using $\Delta t_k$. Once the Newton iteration has converged, the discrete $H$, $D$, $J$, $P$, and $\chi$ fields are updated using \eqref{imptrap}, again with $\Delta t$ replaced by $\Delta t_k$. After completing all $2^{k-1}$ substeps, the resulting solution at the common time $t^{n+1}$ is denoted by
\[
U_{\mathrm{KDL},h}^{n+1,k}
=
[E_{\mathrm{KDL},h}^{n+1,k},
D_{\mathrm{KDL},h}^{n+1,k},
H_{\mathrm{KDL},h}^{n+1,k},
P_{\mathrm{KDL},h}^{n+1,k},
J_{\mathrm{KDL},h}^{n+1,k},
\chi_{\mathrm{KDL},h}^{n+1,k}]^\top.
\]
For the limiting Maxwell-Kerr-Lorentz system, the corresponding nonlinear residual $F_{\mathrm{KL},k}$ and the Jacobian $JF_{\mathrm{KL},k}$ are obtained from the limiting form of \eqref{newtonstep}, and the corresponding revised definition \eqref{eq:jacobi}, respectively. Equivalently this can be done by taking $a_0=0$, $a_1=1$, and $a_2=0$ in the Debye coefficients and replacing $\Delta t$ by $\Delta t_k$. After the Newton iteration has converged, the discrete $H$, $D$, $J$, and $P$ fields are advanced using the limiting form of the $(2,2M)$ trapezoidal FDTD scheme. After completing all $2^{k-1}$ substeps, the resulting solution at $t^{n+1}$ is denoted by
\[
U_{\mathrm{KL},h}^{n+1,k}
=
[E_{\mathrm{KL},h}^{n+1,k},
D_{\mathrm{KL},h}^{n+1,k},
H_{\mathrm{KL},h}^{n+1,k},
P_{\mathrm{KL},h}^{n+1,k},
J_{\mathrm{KL},h}^{n+1,k}]^\top.
\]
For the Maxwell-Kerr-Lorentz system, we compute $M$ such approximations using $k=1,\ldots,M$. The $(2M,2M)$ solution is then obtained from the Richardson extrapolation
\begin{align}
U_{\mathrm{KL},h,RE}^{n+1}
&=
\sum_{k=1}^{M}
\alpha_k\,
U_{\mathrm{KL},h}^{n+1,k},\qquad
\alpha_k
=
\prod_{\substack{m = 1 \\ m \ne k}}^M
\frac{1}{1-4^{m-k}}.
\label{eq:implementation-RE-KL}
\end{align}
For the Maxwell-Kerr-Debye-Lorentz system, we instead compute $2M-1$ approximations using $k=1,\ldots,2M-1$. The $(2M,2M)$ solution is given by
\begin{align}
U_{\mathrm{KDL},h,RE}^{n+1}
&=
\sum_{k=1}^{2M-1}
\tilde{\alpha}_k\,
U_{\mathrm{KDL},h}^{n+1,k}, \qquad
\tilde{\alpha}_k
=
\frac{2^{k-1}}{2^{2M-1}-1}
\prod_{\substack{m=1\\m\ne k}}^{2M-1}
\frac{1}{1-2^{m-k}}.
\label{eq:implementation-RE-KDL}
\end{align}
In our simulations, for every $M$ considered, the Newton solver converged within at most four iterations.

\section{Numerical Simulations}\label{sec:numerics}
In this section, we report several numerical experiments to demonstrate the theoretical results developed so far, and to evaluate the performance of the proposed schemes. 
 We consider an extended form of the Maxwell--Kerr--Debye system with $N_\omega$ linear Lorentz responses, where $N_\omega \geq 1, N_\omega \in \mathbb{N}$.
\begin{subequations}\label{KDLi}
\begin{align}
\mu_0 \partial_t H &= \partial_x E, \\
\partial_t D &= \partial_x H,  \\
D &= \epsilon_0 \left( (\epsilon_\infty + a \chi) E + \sum_{k=1}^{N_\omega} P_k \right), \\
\partial_t \chi &= -\frac{1}{\varepsilon} (\chi - E^2), \\
\partial_t P_k &= J_k, \quad k = 1, 2, \ldots, N_\omega, \\
\partial_t J_k + \frac{1}{\tau_k} J_k + \omega_k^2 P_k &= (\omega_p^{(k)})^2 E, \quad k = 1, 2, \ldots, N_\omega.
\end{align}
\end{subequations}
Following work done in \cite{PengJCP2020}, we let $\bar{\tau}$ be the reference time scale, $c \bar{\tau}$ be the reference space scale, where $c = 1/\sqrt{\mu_0 \epsilon_0}$ is the speed of light, and let $\bar{E}$ be the reference electric field. By setting
\begin{align}\label{Para}
    t/\bar{\tau} \rightarrow t, \quad x / (c \bar{\tau}) &\rightarrow x, \quad \omega_k \bar{\tau} \rightarrow \omega_k, \quad (1/\tau_k) \bar{\tau} \rightarrow 1/\tau_k, \quad \bar{\tau} \omega_p^{(k)} \rightarrow \omega_p^{(k)}, \quad \frac{1}{\varepsilon} \bar{\tau} \rightarrow \frac{1}{\varepsilon}, \notag\\
    \left(H / \bar{E} \right) \sqrt{\mu_0 / \epsilon_0} &\rightarrow H, \quad D / (\epsilon_0 \bar{E}) \rightarrow D, \quad P / \bar{E} \rightarrow P, \left(J / \bar{E} \right) \bar{\tau} \rightarrow J, \quad
    E / \bar{E} \rightarrow E, \notag\\
    \chi / &\bar{E}^2 \rightarrow \chi, \quad
    a \bar{E}^2 \rightarrow a,
\end{align}
one obtains the dimensionless system,
\begin{subequations}\label{KDLaa}
\begin{align}
\partial_t H &= \partial_x E, \label{fardim}\\
\partial_t D &= \partial_x H, \label{ampdim}  \\
D &= (\epsilon_\infty + a \chi) E + \sum_{k=1}^{N_\omega} P_k,  \\
\partial_t \chi &= -\frac{1}{\varepsilon} (\chi - E^2),  \\
\partial_t P_k &= J_k, \quad k = 1, 2, \ldots, N_\omega, \\
\partial_t J_k + \frac{1}{\tau_k} J_k + \omega_k^2 P_k &= (\omega_p^{(k)})^2 E, \quad k = 1, 2, \ldots, N_\omega.\label{lordim}
\end{align}
\end{subequations}
For simplicity, we use the same notation for both the scaled and the original variables. 

In the following sections, we demonstrate the implementation of the proposed schemes using $N_\omega = 1$ in Sections \ref{sim1}, \ref{sim2}, \ref{soliton} and $N_\omega = 3$ in Section \ref{sim4}, with $M \in \mathbb{N}$.

\subsection{Accuracy test: Manufactured Solution}
\label{sim1}
We use manufactured solutions to demonstrate the accuracy of the numerical schemes presented in sections \ref{22MLF}, \ref{22MTraps}, \ref{2M2M} and \ref{2M2MKDL}. The exact solutions are taken from \cite{PengJCP2020} and are defined as follows: 
\begin{equation}\label{eq: exactsoln}
    \begin{aligned}
        H(x,t) &= \sin(x)\sin(t),\\
        \chi(x,t) &= \frac{1}{2}t - \frac{1}{4}\sin(2t) + \sin^2(x) + \frac{1}{2}, \\
        E(x,t) &=  \sqrt{\chi(x,t) + \varepsilon \sin^2(t)}, \\
        P(x,t) &= \cos(x)\sin(t),\\
        J(x,t) &= \cos(x)\cos(t),\\
        D(x,t) &= \left(\epsilon_\infty +\chi(x,t)\right)E(x,t) + P(x,t).
        \end{aligned}
\end{equation}
For $N_\omega = 1$, we add forcing functions to \eqref{fardim}, \eqref{ampdim} and \eqref{lordim}  to make \eqref{KDLaa} hold on $\Omega = [0, 2\pi]$, with periodic boundary conditions. 

For the $(2, 2M)$ schemes, the time step is chosen as $\Delta t = \nu \Delta x^{M}$ with $\nu = 0.9$ and $M = 1,2,3$. For the $(2M, 2M)$ schemes, the time step is chosen as $\Delta t = \nu \Delta x$ with $\nu = 0.9$. 

We list the relative $\ell^2$ and $\ell^\infty$ errors, along with the observed
orders of spatial accuracy, for the electric field $E$ in
Tables~\ref{tab:full-comparison} and~\ref{tab:implicit-44-66}. The errors at
the final time $T$ are computed by comparing the numerical solution with the
exact solution $E(t^N)$ and are given by
\begin{align}
\mathrm{Rel}\,\ell_2(h,T)
&:=
\frac{
\left(\sum_{\ell=0}^{J-1} h\,\bigl|E_\ell(t^N)-E_\ell^{N}\bigr|^2\right)^{1/2}
}{
\left(\sum_{\ell=0}^{J-1} h\,\bigl|E_\ell(t^N)\bigr|^2\right)^{1/2}
},  \\
\mathrm{Rel}\,\ell_\infty(h,T)
&:=
\frac{
\max_{0\le \ell \le J-1}\bigl|E_\ell(t^N)-E_\ell^{N}\bigr|
}{
\max_{0\le \ell \le J-1}\bigl|E_\ell(t^N)\bigr|
}.
\end{align}
In the above, we choose $T = 1$. 

In Tables~\ref{tab:full-comparison}--\ref{tab:implicit-44-66}, we present the $\ell_2$ and $\ell_\infty$ errors and convergence rates of the electric field for the proposed $(2,2M)$ and $(2M,2M)$ schemes for $\varepsilon=0$, $\varepsilon=10^{-12}$, and $\varepsilon=1$. Here, $\varepsilon=0$ corresponds to the limiting Maxwell-Kerr-Lorentz model, $\varepsilon=10^{-12}$ represents a stiff near-limit regime, and $\varepsilon=1$ represents a finite-relaxation regime. While only the errors in the electric field are shown, the remaining discrete fields $H$, $D$, $P$, $J$, and $\chi$ are expected to attain the same designed convergence order, since they are advanced as components of the same fully coupled scheme using discretizations of the corresponding temporal and spatial orders. From Table~\ref{tab:full-comparison}, we observe clear second-, fourth-, and sixth-order convergence for the $(2,2)$, $(2,4)$, and $(2,6)$ schemes, respectively, for all three values of $\varepsilon$.

\begin{table}
\centering
\caption{Convergence comparison of $(2,2M)$ leapfrog and trapezoidal FDTD schemes for discretizations $(2,2)$, $(2,4)$, and $(2,6)$, for various values of $\varepsilon$ and number of nodal points, $J$.}
\label{tab:full-comparison}
\resizebox{\textwidth}{!}{
\centering
\begin{tabular}{ccr
                rr rr
                rr rr}
\toprule
& & & \multicolumn{4}{c}{Leapfrog FDTD scheme} & \multicolumn{4}{c}{Trapezoidal FDTD scheme} \\
\cmidrule(r){4-7} \cmidrule(r){8-11}
Scheme & $\varepsilon$ & $J$
  & Rel $\ell_2$ error & Rate
  & Rel $\ell_\infty$ error & Rate
  & Rel $\ell_2$ error & Rate
  & Rel $\ell_\infty$ error & Rate \\
\midrule

\multirow{15}{*}{$(2,2)$}
& \multirow{5}{*}{$0$}
& 15  & 4.696e-03 & --    & 8.647e-03 & --    & 4.381e-03 & --    & 1.014e-02 & --    \\
& & 30  & 1.077e-03 & 2.124 & 2.154e-03 & 2.005 & 1.102e-03 & 1.992 & 2.437e-03 & 2.057 \\
& & 60  & 2.745e-04 & 1.972 & 5.589e-04 & 1.946 & 2.839e-04 & 1.956 & 5.995e-04 & 2.023 \\
& & 120 & 6.743e-05 & 2.026 & 1.370e-04 & 2.028 & 7.099e-05 & 2.000 & 1.495e-04 & 2.004 \\
& & 240 & 1.690e-05 & 1.997 & 3.450e-05 & 1.990 & 1.784e-05 & 1.992 & 3.723e-05 & 2.006 \\
\cmidrule{2-11}

& \multirow{5}{*}{$10^{-12}$}
& 15  & 4.696e-03 & --    & 8.647e-03 & --    & 4.381e-03 & --    & 1.014e-02 & --    \\
& & 30  & 1.077e-03 & 2.124 & 2.154e-03 & 2.005 & 1.102e-03 & 1.992 & 2.437e-03 & 2.057 \\
& & 60  & 2.745e-04 & 1.972 & 5.589e-04 & 1.946 & 2.839e-04 & 1.956 & 5.995e-04 & 2.023 \\
& & 120 & 6.743e-05 & 2.026 & 1.370e-04 & 2.028 & 7.099e-05 & 2.000 & 1.495e-04 & 2.004 \\
& & 240 & 1.690e-05 & 1.997 & 3.450e-05 & 1.990 & 1.784e-05 & 1.992 & 3.723e-05 & 2.006 \\
\cmidrule{2-11}

& \multirow{5}{*}{$1$}
& 15  & 2.820e-03 & --    & 3.390e-03 & --    & 3.118e-03 & --    & 5.086e-03 & --    \\
& & 30  & 6.819e-04 & 2.048 & 8.979e-04 & 1.917 & 7.906e-04 & 1.980 & 1.010e-03 & 2.332 \\
& & 60  & 1.441e-04 & 2.242 & 2.001e-04 & 2.166 & 1.774e-04 & 2.156 & 3.140e-04 & 1.686 \\
& & 120 & 3.530e-05 & 2.030 & 4.798e-05 & 2.060 & 4.435e-05 & 2.000 & 7.760e-05 & 2.017 \\
& & 240 & 8.622e-06 & 2.034 & 1.348e-05 & 1.832 & 1.100e-05 & 2.011 & 2.050e-05 & 1.920 \\

\midrule

\multirow{15}{*}{$(2,4)$}
& \multirow{5}{*}{$0$}
& 15  & 7.595e-04 & --    & 1.524e-03 & --    & 7.689e-04 & --    & 1.960e-03 & --    \\
& & 30  & 5.124e-05 & 3.890 & 1.184e-04 & 3.686 & 5.242e-05 & 3.875 & 1.145e-04 & 4.098 \\
& & 60  & 3.298e-06 & 3.958 & 7.756e-06 & 3.933 & 3.382e-06 & 3.954 & 7.418e-06 & 3.948 \\
& & 120 & 2.079e-07 & 3.988 & 4.904e-07 & 3.983 & 2.133e-07 & 3.987 & 4.681e-07 & 3.986 \\
& & 240 & 1.302e-08 & 3.997 & 3.074e-08 & 3.996 & 1.336e-08 & 3.997 & 2.933e-08 & 3.996 \\
\cmidrule{2-11}

& \multirow{5}{*}{$10^{-12}$}
& 15  & 7.595e-04 & --    & 1.524e-03 & --    & 7.689e-04 & --    & 1.960e-03 & --    \\
& & 30  & 5.124e-05 & 3.890 & 1.184e-04 & 3.686 & 5.242e-05 & 3.875 & 1.145e-04 & 4.098 \\
& & 60  & 3.298e-06 & 3.958 & 7.756e-06 & 3.933 & 3.382e-06 & 3.954 & 7.418e-06 & 3.948 \\
& & 120 & 2.079e-07 & 3.988 & 4.904e-07 & 3.983 & 2.133e-07 & 3.987 & 4.681e-07 & 3.986 \\
& & 240 & 1.302e-08 & 3.997 & 3.074e-08 & 3.996 & 1.336e-08 & 3.997 & 2.933e-08 & 3.996 \\
\cmidrule{2-11}

& \multirow{5}{*}{$1$}
& 15  & 3.903e-04 & --    & 6.704e-04 & --    & 4.248e-04 & --    & 5.138e-04 & --    \\
& & 30  & 2.219e-05 & 4.137 & 3.994e-05 & 4.069 & 2.452e-05 & 4.115 & 3.924e-05 & 3.711 \\
& & 60  & 1.403e-06 & 3.983 & 2.499e-06 & 3.998 & 1.556e-06 & 3.978 & 2.690e-06 & 3.866 \\
& & 120 & 8.827e-08 & 3.991 & 1.616e-07 & 3.951 & 9.795e-08 & 3.990 & 1.719e-07 & 3.968 \\
& & 240 & 5.527e-09 & 3.997 & 1.010e-08 & 4.000 & 6.133e-09 & 3.997 & 1.080e-08 & 3.992 \\

\midrule

\multirow{15}{*}{$(2,6)$}
& \multirow{5}{*}{$0$}
& 15  & 2.665e-04 & --    & 6.635e-04 & --    & 2.730e-04 & --    & 7.451e-04 & --    \\
& & 30  & 5.531e-06 & 5.591 & 1.078e-05 & 5.944 & 5.609e-06 & 5.605 & 1.051e-05 & 6.148 \\
& & 60  & 9.630e-08 & 5.844 & 1.971e-07 & 5.773 & 9.731e-08 & 5.849 & 1.922e-07 & 5.773 \\
& & 120 & 1.553e-09 & 5.955 & 3.156e-09 & 5.965 & 1.568e-09 & 5.956 & 3.079e-09 & 5.964 \\
& & 240 & 2.443e-11 & 5.990 & 4.932e-11 & 6.000 & 2.469e-11 & 5.989 & 4.871e-11 & 5.982 \\
\cmidrule{2-11}

& \multirow{5}{*}{$10^{-12}$}
& 15  & 2.665e-04 & --    & 6.635e-04 & --    & 2.730e-04 & --    & 7.451e-04 & --    \\
& & 30  & 5.531e-06 & 5.591 & 1.078e-05 & 5.944 & 5.609e-06 & 5.605 & 1.051e-05 & 6.148 \\
& & 60  & 9.630e-08 & 5.844 & 1.971e-07 & 5.773 & 9.731e-08 & 5.849 & 1.922e-07 & 5.773 \\
& & 120 & 1.553e-09 & 5.955 & 3.156e-09 & 5.964 & 1.568e-09 & 5.956 & 3.079e-09 & 5.964 \\
& & 240 & 2.447e-11 & 5.988 & 4.946e-11 & 5.996 & 2.473e-11 & 5.987 & 4.885e-11 & 5.978 \\
\cmidrule{2-11}

& \multirow{5}{*}{$1$}
& 15  & 8.257e-05 & --    & 1.580e-04 & --    & 8.619e-05 & --    & 1.333e-04 & --    \\
& & 30  & 2.006e-06 & 5.363 & 3.457e-06 & 5.514 & 2.044e-06 & 5.398 & 3.175e-06 & 5.392 \\
& & 60  & 3.521e-08 & 5.832 & 8.477e-08 & 5.350 & 3.576e-08 & 5.837 & 7.911e-08 & 5.327 \\
& & 120 & 5.740e-10 & 5.939 & 1.423e-09 & 5.897 & 5.822e-10 & 5.940 & 1.349e-09 & 5.874 \\
& & 240 & 9.014e-12 & 5.993 & 2.254e-11 & 5.980 & 9.201e-12 & 5.984 & 2.167e-11 & 5.960 \\

\bottomrule
\end{tabular}
}
\end{table}

Table~\ref{tab:implicit-44-66} shows the errors obtained with the $(2M,2M)$ extrapolated trapezoidal FDTD schemes. For $\varepsilon=0$, the discrete solution of the limiting Maxwell-Kerr-Lorentz model described in Section~\ref{2M2M} is used, while for $\varepsilon=10^{-12}$ and $\varepsilon=1$, the discrete solution of the Maxwell-Kerr-Debye-Lorentz model developed in Section~\ref{2M2MKDL} is applied. The $(4,4)$ method exhibits approximately fourth-order convergence in both the $\ell_2$ and $\ell_\infty$ norms for all three values of $\varepsilon$. Similarly, the $(6,6)$ method exhibits approximately sixth-order convergence, with the observed rates approaching six as the mesh is refined. The results for $\varepsilon=0$ and $\varepsilon=10^{-12}$ are nearly indistinguishable, showing that the Maxwell-Kerr-Debye-Lorentz discretization approaches the limiting Maxwell-Kerr-Lorentz discretization in the stiff-relaxation limit. For $\varepsilon=1$, the expected fourth- and sixth-order convergence rates are also recovered, demonstrating the high-order accuracy of the modified Richardson extrapolation when the nonlinear Debye relaxation is retained. In both cases, it is possible to obtain higher-order accuracy by simply increasing the value of M.

\begin{table}[H]
\setlength{\tabcolsep}{6pt}
\caption{Convergence of $(2M,2M)$ extrapolated trapezoidal FDTD schemes for discretizations $(4,4)$ and $(6,6)$, for various values of $\varepsilon$ and number of nodal points, $J$.}
\centering
\begin{tabular}{cccrrrr}
\toprule
Scheme & $\varepsilon$ & $J$
  & $\mathrm{Rel}\,\ell_2$ error & Rate
  & $\mathrm{Rel}\,\ell_\infty$ error & Rate \\
\midrule

\multirow{15}{*}{$(4,4)$}
& \multirow{5}{*}{$0$}
& 15  & 5.348e-04 & --    & 1.329e-03 & --    \\
& & 30  & 3.457e-05 & 3.951 & 6.757e-05 & 4.298 \\
& & 60  & 2.387e-06 & 3.856 & 4.922e-06 & 3.779 \\
& & 120 & 1.506e-07 & 3.987 & 3.104e-07 & 3.987 \\
& & 240 & 9.591e-09 & 3.973 & 1.997e-08 & 3.958 \\
\cmidrule{2-7}

& \multirow{5}{*}{$10^{-12}$}
& 15  & 5.351e-04 & --    & 1.314e-03 & --    \\
& & 30  & 3.464e-05 & 3.949 & 6.719e-05 & 4.289 \\
& & 60  & 2.390e-06 & 3.857 & 4.884e-06 & 3.782 \\
& & 120 & 1.508e-07 & 3.987 & 3.080e-07 & 3.987 \\
& & 240 & 9.598e-09 & 3.973 & 1.981e-08 & 3.959 \\
\cmidrule{2-7}

& \multirow{5}{*}{$1$}
& 15  & 2.355e-04 & --    & 4.383e-04 & --    \\
& & 30  & 1.561e-05 & 3.915 & 2.616e-05 & 4.067 \\
& & 60  & 1.136e-06 & 3.780 & 1.901e-06 & 3.782 \\
& & 120 & 7.147e-08 & 3.991 & 1.193e-07 & 3.995 \\
& & 240 & 4.630e-09 & 3.948 & 7.781e-09 & 3.938 \\

\midrule

\multirow{15}{*}{$(6,6)$}
& \multirow{5}{*}{$0$}
& 15  & 2.696e-04 & --    & 6.800e-04 & --    \\
& & 30  & 5.329e-06 & 5.661 & 8.856e-06 & 6.263 \\
& & 60  & 9.331e-08 & 5.836 & 1.665e-07 & 5.733 \\
& & 120 & 1.506e-09 & 5.953 & 2.674e-09 & 5.960 \\
& & 240 & 2.372e-11 & 5.988 & 4.207e-11 & 5.990 \\
\cmidrule{2-7}

& \multirow{5}{*}{$10^{-12}$}
& 15  & 2.695e-04 & --    & 6.800e-04 & --    \\
& & 30  & 5.328e-06 & 5.661 & 8.856e-06 & 6.263 \\
& & 60  & 9.331e-08 & 5.835 & 1.665e-07 & 5.733 \\
& & 120 & 1.506e-09 & 5.953 & 2.674e-09 & 5.960 \\
& & 240 & 2.373e-11 & 5.988 & 4.220e-11 & 5.986 \\
\cmidrule{2-7}

& \multirow{5}{*}{$1$}
& 15  & 8.177e-05 & --    & 1.660e-04 & --    \\
& & 30  & 1.919e-06 & 5.413 & 2.749e-06 & 5.916 \\
& & 60  & 3.398e-08 & 5.819 & 7.400e-08 & 5.215 \\
& & 120 & 5.551e-10 & 5.936 & 1.256e-09 & 5.881 \\
& & 240 & 8.776e-12 & 5.983 & 2.029e-11 & 5.952 \\

\bottomrule
\end{tabular}
\label{tab:implicit-44-66}
\end{table}
The convergence rates for the remaining variables $H$, $P$, $J$, and $D$, as well as the nonlinear susceptibility $\chi$ for $\varepsilon>0$, exhibit similar behavior, but are omitted for brevity. In Table~\ref{tab:cn_22_eps}, we consider $\varepsilon=10^{-3}$ and $\nu=0.9$, with refinement taking the time step from $\Delta t\gg\varepsilon$ through $\Delta t=\mathcal{O}(\varepsilon)$ and eventually to $\Delta t\ll\varepsilon$. The expected second-order convergence deteriorates as $\Delta t=\mathcal{O}(\varepsilon)$. With further refinement, so that $\Delta t\ll \varepsilon$, the convergence rate approaches second-order, recovering the expected accuracy of the $(2,2)$ trapezoidal scheme.

\begin{table}[H]
\centering
\caption{Convergence of Trapezoidal scheme for the $(2,2)$ discretization with $\varepsilon = 10^{-3}$, $\nu=0.9$.}
\label{tab:cn_22_eps}
\renewcommand{\arraystretch}{1.15}
\begin{tabular}{c c c c c c c}
\toprule
$\varepsilon$ & $J$ & $\Delta t$
& Rel $\ell_2$ error & Rate
& Rel $\ell_\infty$ error & Rate \\
\midrule

\multirow{12}{*}{$10^{-3}$}
& 15    & $3.769911\times10^{-1}$ & $4.484\times10^{-3}$ & --     & $1.028\times10^{-2}$ & --     \\
& 30    & $1.884956\times10^{-1}$ & $1.211\times10^{-3}$ & 1.888  & $2.582\times10^{-3}$ & 1.993  \\
& 60    & $9.424778\times10^{-2}$ & $4.041\times10^{-4}$ & 1.584  & $7.382\times10^{-4}$ & 1.807  \\
& 120   & $4.712389\times10^{-2}$ & $2.061\times10^{-4}$ & 0.971  & $2.886\times10^{-4}$ & 1.355  \\
& 240   & $2.356194\times10^{-2}$ & $1.596\times10^{-4}$ & 0.369  & $1.734\times10^{-4}$ & 0.735  \\
& 480   & $1.178097\times10^{-2}$ & $9.507\times10^{-5}$ & 0.748  & $9.468\times10^{-5}$ & 0.873  \\
& 960   & $5.890486\times10^{-3}$ & $6.003\times10^{-5}$ & 0.663  & $5.472\times10^{-5}$ & 0.791  \\
& 1920  & $2.945243\times10^{-3}$ & $6.034\times10^{-5}$ & -0.007 & $5.587\times10^{-5}$ & -0.030 \\
& 3840  & $1.472622\times10^{-3}$ & $2.340\times10^{-5}$ & 1.366  & $2.171\times10^{-5}$ & 1.364  \\
& 7680  & $7.363108\times10^{-4}$ & $7.000\times10^{-6}$ & 1.741  & $6.497\times10^{-6}$ & 1.740  \\
& 15360 & $3.681554\times10^{-4}$ & $1.898\times10^{-6}$ & 1.883  & $1.762\times10^{-6}$ & 1.882  \\
& 30720 & $1.840777\times10^{-4}$ & $4.933\times10^{-7}$ & 1.944  & $4.580\times10^{-7}$ & 1.944  \\

\bottomrule
\end{tabular}
\end{table}

\subsection{Energy Decay}
\label{sim2}
For the purpose of illustrating the energy decay property of the proposed schemes, we use the following parameter values  \cite{sakkaplangkul2017multiscale}: 
\begin{align*}
    \epsilon_0 &= 8.854 \times 10^{-12}, \quad \epsilon_\infty = 2.25, \quad \epsilon_s = 5.25, \quad  \omega_0 = 5.84, \quad \omega_p = \omega_0\sqrt{\epsilon_s - \epsilon_\infty},\\
    \mu_0 &= 1.2566 \times 10^{-6} , \quad 1/\tau = 1.168 \times 10^{-5} , \quad a = 1\times 10^{-2}, \quad \varepsilon = 1\times 10^{-6}
\end{align*}
All fields are initially set to zero. We impose on the left boundary \(x=0\) an incident electric field of the form $ E(0,t) = E_0\, g(ft), \, t \ge 0,$ where $g$ is the \textit{Ricker pulse} defined by \cite{sakkaplangkul2017multiscale}
\begin{align*}
    g(s) =
\begin{cases}
\displaystyle
\frac{2k}{11} \left(2k(s - 1.35)^2 - 1\right) e^{-k(s - 1.35)^2}, & \text{if } s < 3.48/f \\
0, & \text{if } s \geq 3.48/f
\end{cases}
\end{align*}
with $k = \left(\pi/{1.31}\right)^2, \,
f = 4 \times 10^7 \ \text{Hz}, \,
E_0 = 4.0 \times 10^6$.

  \begin{figure}[H]
    \centering
    \includegraphics[width=.5\linewidth]{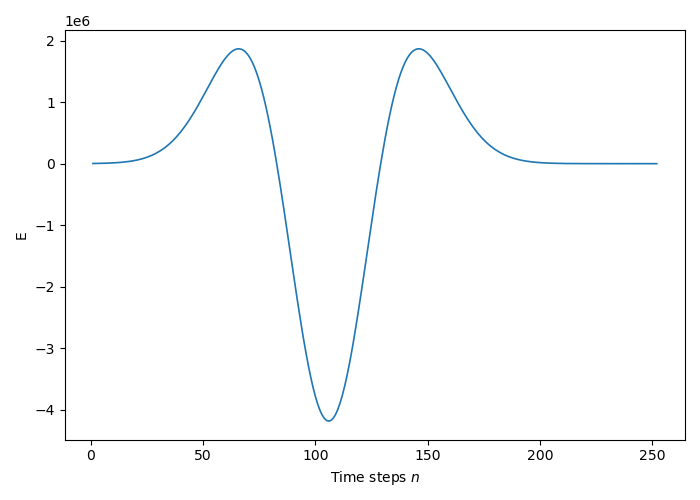}
    \caption{Time trace of electric field at $x=0$}
  \end{figure}

We take $\Delta x = 1/10$ and choose the time step as $\Delta t = \nu \Delta x$, where the CFL number is set to $\nu = 2/\pi$. This is the limiting value in the CFL condition \eqref{eq: cfl}, as derived in \cite{bokil2012analysis}, for which the proposed leapfrog schemes remain stable. While the trapezoidal FDTD schemes are unconditionally stable, we adopt the same time-stepping condition for consistency and to facilitate a fair comparison with the leapfrog schemes.

In Fig.~\ref{fig:energy}, (a) shows the time trace of the electric field $E$ at $x=0$. The plots (b) and (c) show the time evolution of the discrete energy for the leapfrog and trapezoidal schemes, respectively. In both cases, the discrete energy decays monotonically in time, in agreement with the theoretical results. Moreover, the energy curves for the different approximation orders are nearly indistinguishable within each scheme family, indicating that all methods capture the same dissipative behavior.
\begin{figure}[H]
  \centering

  \begin{subfigure}[t]{0.45\linewidth}
    \centering
    \includegraphics[width=\linewidth]{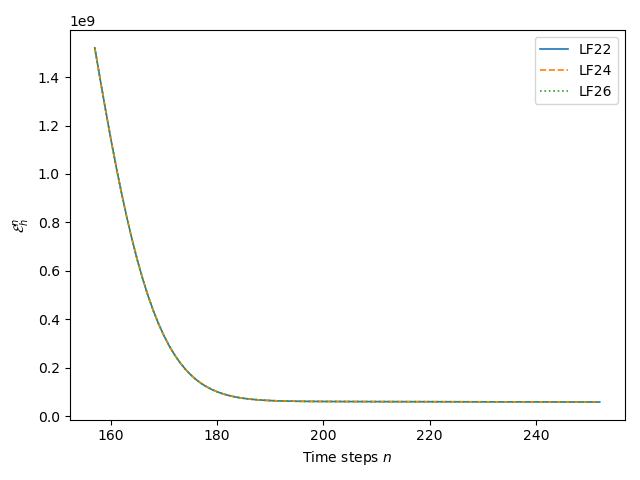}
    \caption{Leapfrog FDTD Schemes}
  \end{subfigure}
  \hfill
  \begin{subfigure}[t]{0.45\linewidth}
    \centering
    \includegraphics[width=\linewidth]{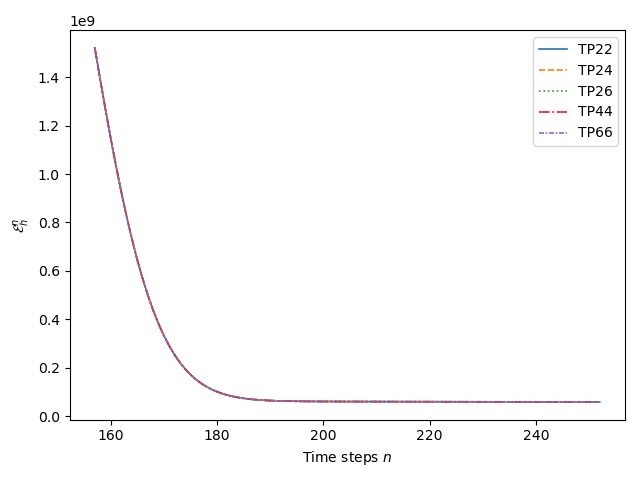}
    \caption{Trapezoidal \& extrapolated trapezoidal FDTD Schemes}
  \end{subfigure}

  \caption{Discrete energy decay of the leapfrog and trapezoidal FDTD schemes}
  \label{fig:energy}
\end{figure}

\subsection{Soliton-like propagation}\label{soliton}
We consider a soliton-like propagation problem with setup similar as considered in \cite{PengJCP2020}. The parameters are taken as 
\begin{align*}
    \epsilon_0 = 1, &\quad \mu_0 = 1, \quad \epsilon_\infty = 2.25, \quad \epsilon_s = 5.25, \quad  \omega_0 = 5.84, \quad \omega_p = \omega_0\sqrt{\epsilon_s - \epsilon_\infty},\\
    &\quad 1/\tau = 1.168 \times 10^{-5} , \quad a = 0.0504, \quad c = 1/\sqrt{\mu_0\epsilon_0\epsilon_\infty}.
\end{align*}
Initially, all the fields are zero. From the left boundary $x = 0$, a sech signal will enter the domain, 
\begin{align}
    E(x = 0, t) = \sech(t-20)\cos(\Omega_0 t), \quad \Omega_0 = 12.57.
\end{align}
As shown in \cite{gilles2000comparison}, the left boundary condition for the magnetic field can be obtained from Maxwell's equations and the inverse Fourier transform of $H$ as 
\begin{align}
    H(0,t) \approx \frac12\left(\sum_{m=0}^{2}\frac{(-i)^m}{m!}\,
    \left(\frac{1}{Z}\right)^{(m)}(\omega_c)\Bigg|_{\omega_c = \Omega_0}\,f^{(m)}(t)\right)e^{i\Omega_0 t} +\text{c.c.}
\end{align}
where c.c stands for the complex conjugate of the first term, $f^{(m)}$ is the m-th derivative of $f(t)$ and $(1/Z)^{(m)}$ is the m-th derivative of $(1/Z)$ with respect to $\omega$. 
In our simulation, we take the computational domain as $\Omega = [0,50]$, and the final time as $T = 40$ or $T = 80$. With these choices, the signal will not reach the right boundary by the final time, thus zero boundary conditions are used for $E$ and $H$ at $x = 50$. 

We use the one-sided finite difference approximations derived in~\cite{yefet2001staggered}, which are at least fourth-order accurate, in order to preserve the overall fourth-order spatial accuracy of the numerical scheme. For a generic field variable \( V \), these approximations are given by:
\begin{subequations}\label{fourthonesided}
    \begin{align}
\left. \frac{\partial V}{\partial x} \right|_{ \frac{1}{2} } &= \frac{1}{24\Delta x} \left( -22V_0 + 17V_1 + 9V_2 - 5V_3 + V_4 \right),  \\
\left. \frac{\partial V}{\partial x} \right|_{1} &= \frac{1}{24\Delta x} \left( -23V_{\frac{1}{2}} + 21V_{\frac{3}{2}} + 3V_{\frac{5}{2}} - V_{\frac{7}{2}} \right),  \\
\left. \frac{\partial V}{\partial x} \right|_{J-1} &= \frac{1}{24\Delta x} \left( 23V_{J-\frac{1}{2}} - 21V_{J-\frac{3}{2}} - 3V_{J-\frac{5}{2}} + V_{J-\frac{7}{2}} \right),  \\
\left. \frac{\partial V}{\partial x} \right|_{J-\frac{1}{2}} &= \frac{1}{24\Delta x} \left( 22V_J - 17V_{J-1} - 9V_{J-2} + 5V_{J-3} - V_{J-4} \right). 
\end{align}

\end{subequations}
For \( M > 3 \), appropriate \( 2M \)-th order one-sided approximations must be defined at the boundaries to maintain the overall \( 2M \)-th order spatial accuracy. However, since the development of high-order boundary conditions is not the primary focus of this work, we restrict our attention to the cases \( M = 1 \) and \( M = 2 \) in this paper.

We present numerical results for the soliton-like propagation obtained using the $(2,2M)$ and $(2M,2M)$ FDTD methods, with $M=1,2$, at time $T= 40$ and $T = 80$ in Fig. \ref{fig:solcomp} and \ref{fig:solb}. We use this example to illustrate the AP and PP properties, to compare the performance of the different schemes, and to examine the influence of the finite relaxation time on the soliton dynamics. In our simulations, we first consider a coarse mesh with $\Delta x = 1/50$ and choose the time step as $\Delta t = \nu \Delta x/c$, where $\nu = 2/\pi$, to emphasize the importance of high-order schemes. We then consider a finer mesh with $\Delta x = 1/250$ and use the same time-stepping relation with $\nu = 0.1$.

\begin{figure}[H]
    \centering
     \includegraphics[scale = 0.45]{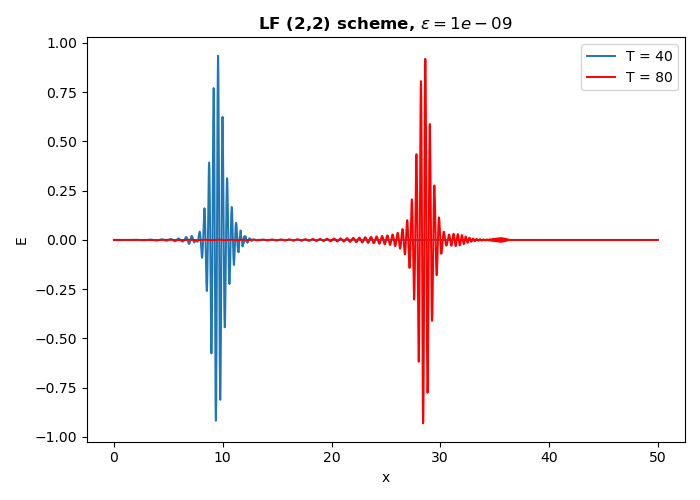}
     \includegraphics[scale = 0.45]{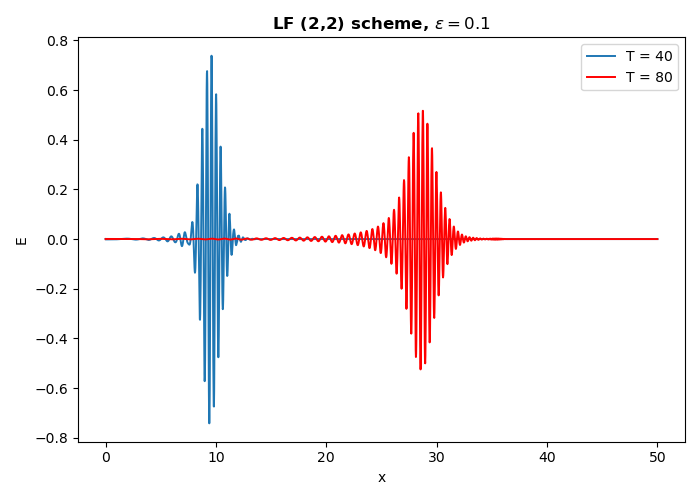}
     \includegraphics[scale = 0.45]{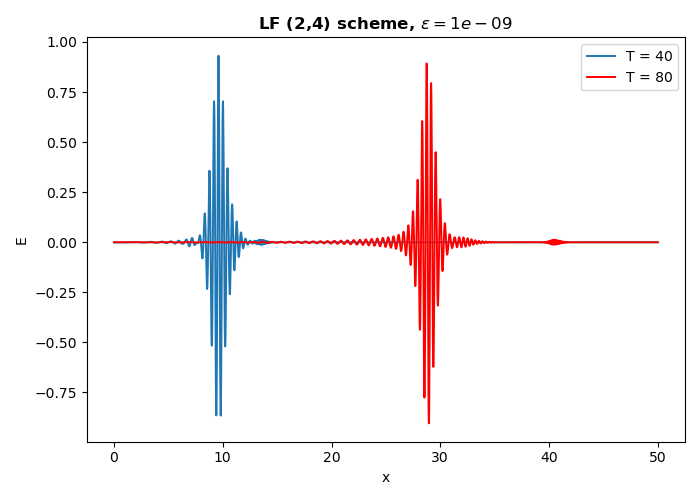}
     \includegraphics[scale = 0.45]{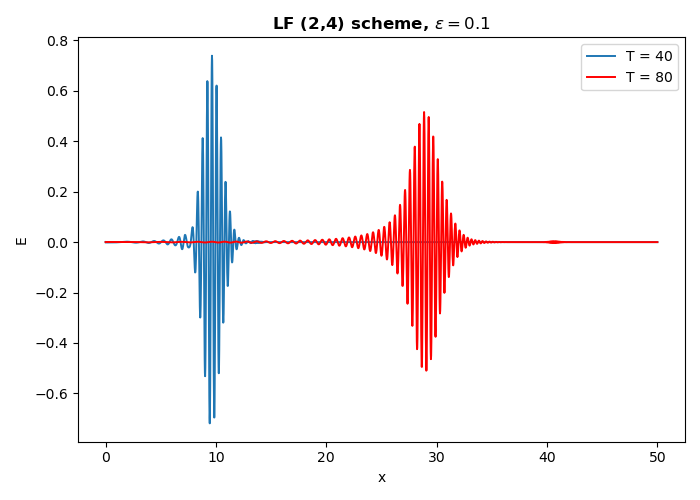}
    \caption{The electric field $E$ in the soliton-like propagation at $T = 40$ and $T=80$ computed using the $(2,2M)$ leapfrog FDTD schemes for $M = 1,2$.}
    \label{fig:solcomp}
\end{figure}
The phenomenon of third-harmonic generation leads to the formation of a daughter pulse that separates from the main soliton pulse and propagates ahead of it with a significantly lower peak amplitude. In Figures~\ref{fig:solcomp} and~\ref{fig:solb}, for the near-instantaneous case $\varepsilon = 10^{-9}$, the main pulse translates almost rigidly from $T=40$ to $T=80$, preserving both its peak amplitude and width. A small leading pulse is observed ahead of the main crest at $T=80$, consistent with Kerr-induced harmonic radiation \cite{Akhmediev1995, Skryabin2010, boyd2019nonlinear} and with earlier results in the instantaneous limit ($\varepsilon=0$) \cite{gilles2000comparison, agrawal2019nonlinear, PengJCP2020, bokil2018high, sorensen2002solitary}. For finite relaxation ($\varepsilon = 0.1$), the main pulse broadens and its peak decreases, while the daughter pulse is strongly suppressed, in agreement with \cite{PengJCP2020}.

\begin{figure}[H]
    \centering
     \includegraphics[scale = 0.45]{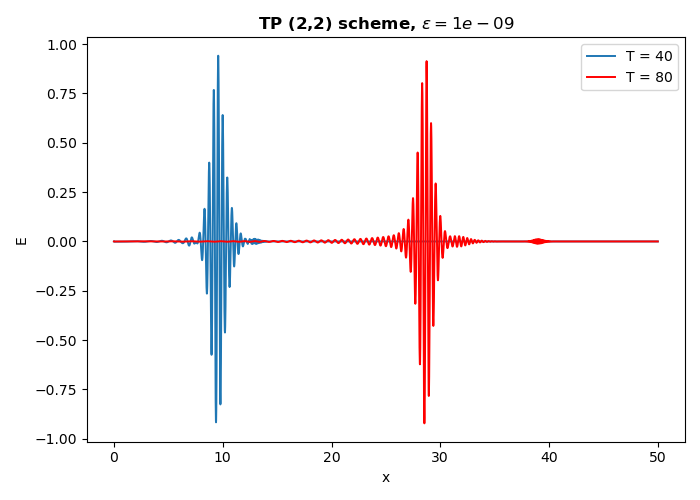}
     \includegraphics[scale = 0.45]{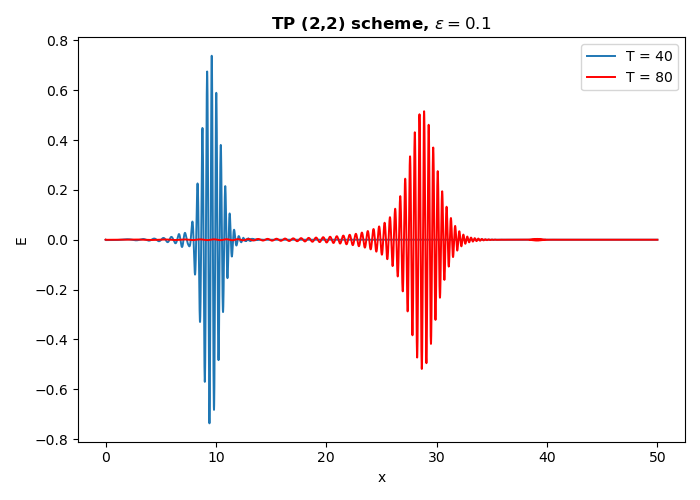}
     \includegraphics[scale = 0.45]{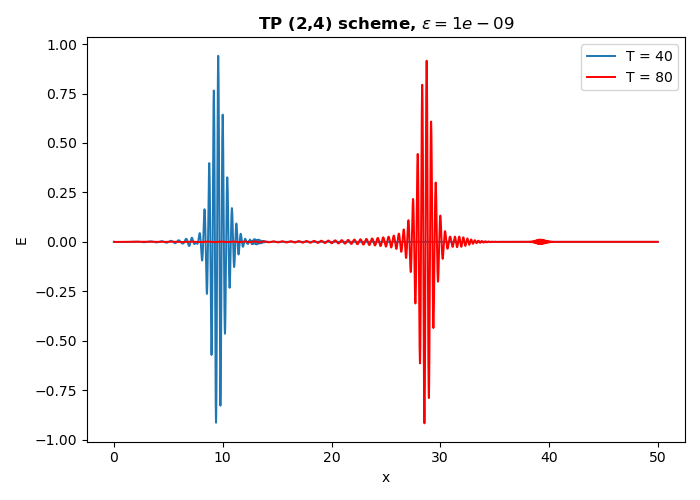}
     \includegraphics[scale = 0.45]{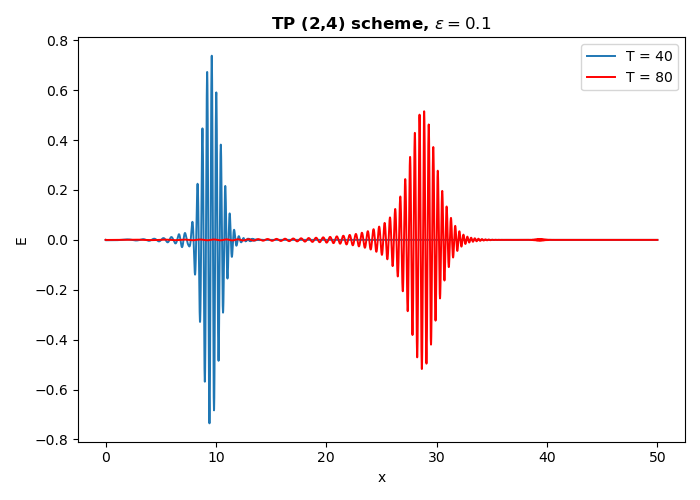}
      \includegraphics[scale = 0.45]{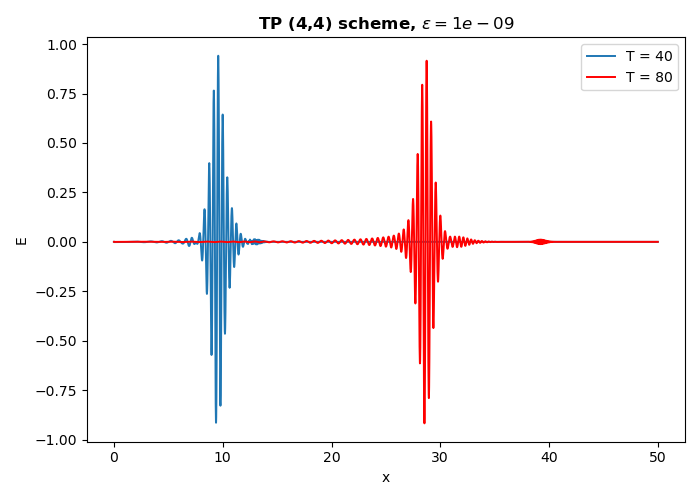}
     \includegraphics[scale = 0.45]{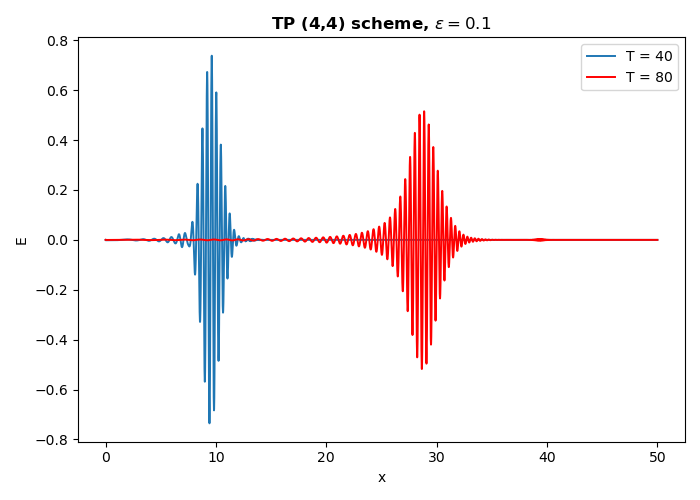}
    \caption{The electric field $E$ in the soliton-like propagation at $T = 40$ and $T=80$ computed using the $(2,2M)$ trapezoidal FDTD schemes and the $(2M,2M)$ extrapolated trapezoidal FDTD schemes}
    \label{fig:solb}
\end{figure}
Figure~\ref{fig:solcomp} further shows that the daughter pulse is sensitive to the order of accuracy of the $(2,2M)$ leapfrog FDTD schemes. For $\varepsilon=10^{-9}$, this pulse appears farther along the propagation direction in the $(2,4)$ scheme than in $(2,2)$, which is due to the stronger dispersive numerical errors in lower-order schemes \cite{bokil2012analysis, FinkelsteinKastner2007}. For $\varepsilon=0.1$, the daughter pulse is not visibly resolved by the $(2,2)$ scheme but remains visible in the $(2,4)$ simulation. When the mesh is refined for the lower-order scheme, for example to $\Delta x = 1/250$, the $(2,2)$ result becomes qualitatively similar to the $(2,4)$ result. The corresponding refined-mesh plots are omitted for brevity, since they mainly confirm the balance between computational cost and accuracy.

Furthermore, additional simulations for intermediate values of $\varepsilon$, not included here, suggest that visible relaxation effects begin near $\varepsilon \approx 4\times 10^{-3}$. Below this threshold, the solution remains essentially indistinguishable from the Kerr limit, whereas above it the pulse progressively departs from Kerr-like solitary-wave behavior. Moreover, the solutions produced by the leapfrog, trapezoidal and extrapolated trapezoidal FDTD schemes, as shown in Fig.~\ref{fig:solb}, are visually very similar for all relaxation times considered.

Finally, we note that no negative values of $\chi_h^n$ are observed in any of the simulations for all schemes, confirming the PP property of the methods. Numerically, the discrete solution $\chi_h^n$ is nonnegative if $\chi_j^n > -1 \times 10^{-14}$ for all grid points in the simulations.

\subsection{Sech signal in fused bulk silica}
\label{sim4}
When ultrashort light pulses propagate in a nonlinear medium, they may experience self-steepening, a nonlinear distortion in which the pulse develops a shock-like sharp edge, typically on the trailing side. This behavior arises from the intensity dependence of the propagation velocity in a nonlinear medium and can lead to optical shock formation \cite{boyd2019nonlinear,Osgood2009}.

We consider an example of a signal propagating through fused bulk silica, following a setup similar to that described in \cite{gilles1999electromagnetic}. Our simulation is based on the dimensionless form for the Kerr-Debye-Lorentz model \eqref{KDLaa} with three Lorentz poles ($N_\omega = 3$). 
\begin{figure}[H]
    \centering
    \includegraphics[width=0.5\linewidth]{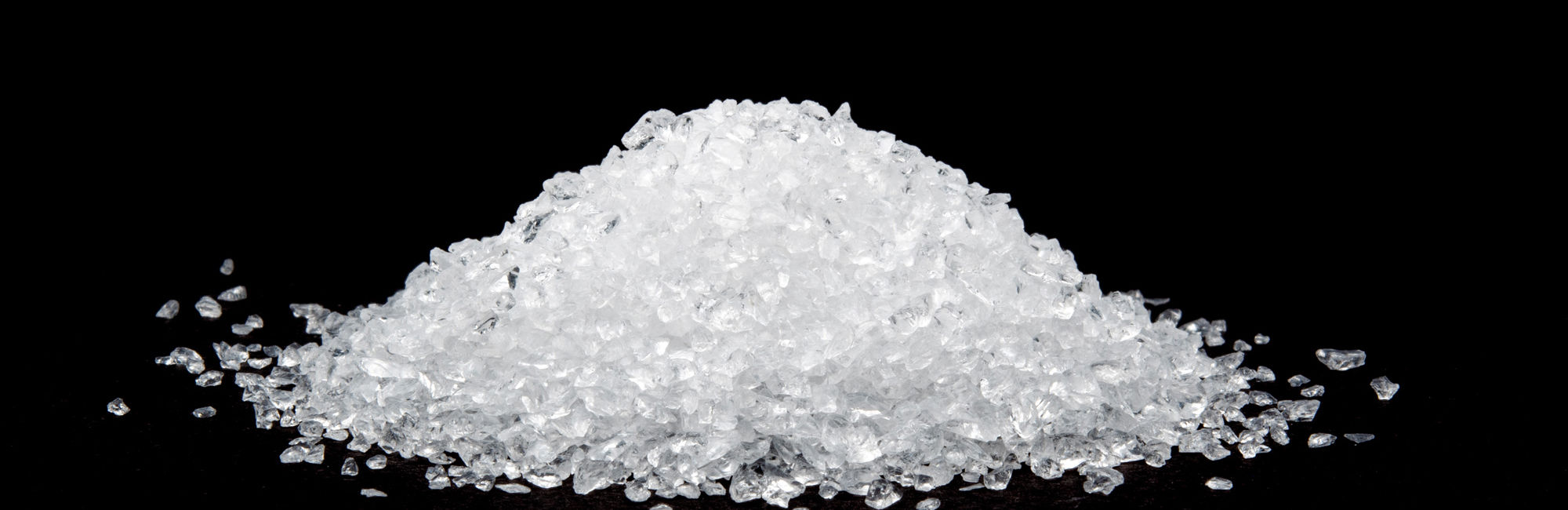}
    \caption{Fused silica, a high-purity amorphous form of silica produced by melting high-purity quartz and rapidly cooling the melt to prevent crystallization. Image reproduced from Elkem \cite{elkem-fused-silica}.}
    \label{fig:silica}
\end{figure}
Prior to non-dimensionalization, the physical parameters in equation \eqref{KDLi} are given by  
\[
\epsilon_\infty = 1, \quad \mu_0 = 4\pi \times 10^{-7} \, \text{H/m}, \quad \epsilon_0 = \frac{1}{\mu_0 c^2}, \quad \tau_1 = \tau_2 = \tau_3 = 10 \, \mu\text{s},
\]
\[
\omega_k = \frac{2\pi c}{\lambda_k}, \quad \omega_p^{(k)} = \sqrt{\beta_k} \, \omega_k, \quad k = 1, 2, 3,
\]
with
\[
\beta_1 = 0.7, \quad \beta_2 = 0.4, \quad \beta_3 = 0.9, \quad 
\lambda_1 = 0.068 \, \mu\text{m}, \quad \lambda_2 = 0.116 \, \mu\text{m}, \quad \lambda_3 = 9.9 \, \mu\text{m}.
\]
Here, \( c \) denotes the speed of light in vacuum. The system is initially quiescent, with all fields identically zero. A \textit{sech}-profiled input signal is imposed at the left boundary \( x = 0 \):
\[
E(x = 0, t) = E_0 \, \text{sech} \left( \frac{t - 50 \, \text{fs}}{t_0} \right) \cos(\Omega_0 t), 
\]
where \( t_0 = 5 \, \text{fs} \), \( \Omega_0 = 1520 \, \text{THz} \), and the amplitude \( E_0 \) is chosen to satisfy \( a E_0^2 = 0.06 \). The final time of the simulation is taken as \( T = 40\pi/\Omega_0 \), and the spatial domain is defined by the interval \([0, 15 c t_0]\). The relaxation time parameter \( \varepsilon \) is varied over the values  
\( \varepsilon = 0.5 \, \text{fs}, \, 0.1 \, \text{fs}, \, 10^{-4} \, \text{fs} \). For the purpose of obtaining a non-dimensional formulation, we employ the reference scales \( \bar{t} = 5 \, \text{fs} = 5 \times 10^{-15} \, \text{s} \) and \( \bar{E} = E_0 \), consistent with the normalization used in equation \eqref{Para}.

The numerical boundary condition for the magnetic field \( H \) at \( x = 0 \) is enforced analogously to the approach described in Section~\ref{soliton}. Since the signal remains localized and does not reach the right boundary within the simulation time window, zero boundary conditions are imposed there for both \( E \) and \( H \). One-sided fourth-order finite difference approximations, as defined in equation \eqref{fourthonesided}, are employed near the boundary to compute the solution with high accuracy. With the physical parameters specified above, the nonlinear effect is dominant in the model (see \cite{gilles1999electromagnetic}). 

We present numerical results for the propagation of a \textit{sech}-shaped signal through fused bulk silica using the proposed numerical methods. Using this example, we want to compare the different schemes developed in this paper and investigate the influence of the finite relaxation time $\varepsilon$. We also want to demonstrate the third and fifth harmonic generation due to the cubic nonlinearity inherent in the model. To this end, we analyze the numerical solutions obtained using the $(2,2M)$ leapfrog FDTD schemes in the frequency domain by applying the discrete Fourier transform to the computed electric field \( E \) over a duration of 20 periods (i.e., over the interval \([0, T]\)) at spatial locations \( x = 2ct_0 \), \( x = 3ct_0 \), and \( x = 4ct_0 \), respectively. In our simulations, we choose the spatial step size as $\Delta x = 1/250$ and time step as $\Delta t = \nu \Delta x$, where $\nu = 0.1$.

\begin{figure}[H]
    \centering
     \includegraphics[scale = 0.45]{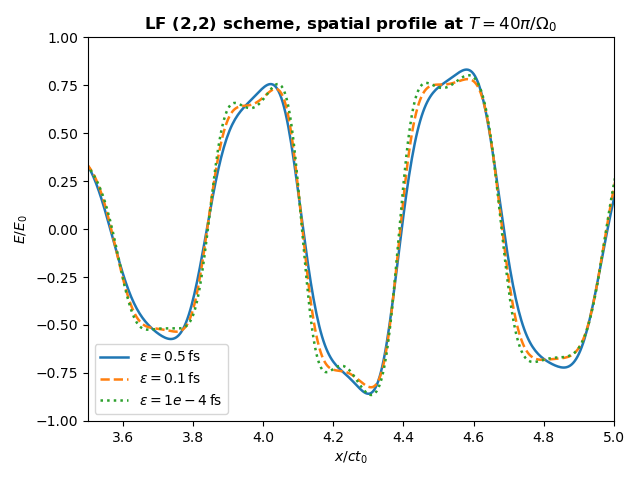}
     \includegraphics[scale = 0.45]{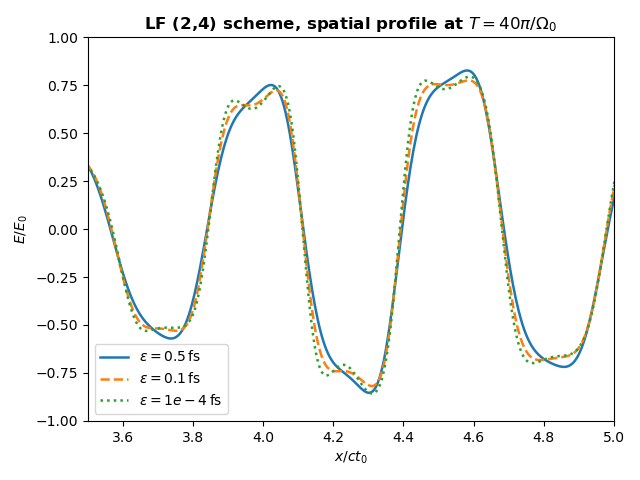}
     \includegraphics[scale = 0.45]{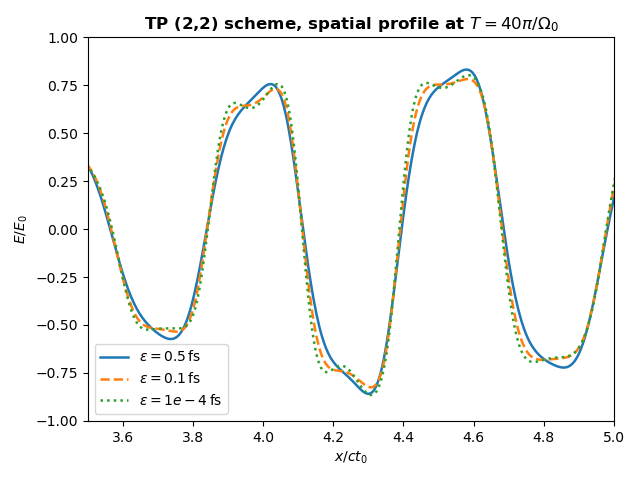}
     \includegraphics[scale = 0.45]{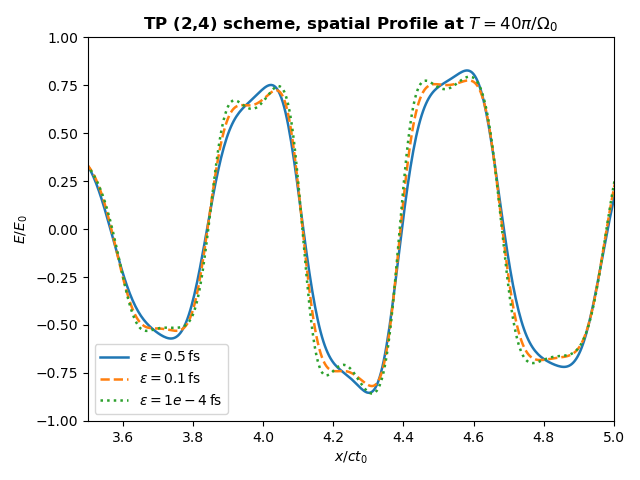}
      \includegraphics[scale = 0.45]{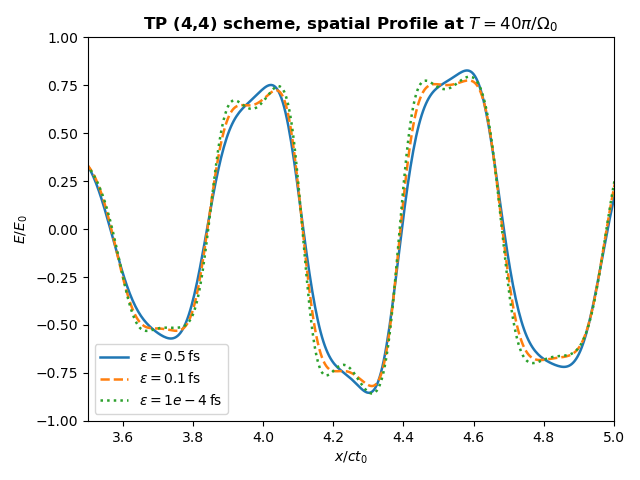}
    \caption{Spatial configuration of the electric field at the final time, $T = 40\pi/\Omega_0$}
    \label{fig: silicaa}
\end{figure}

\begin{figure}[H]
    \centering
     \includegraphics[scale = 0.45]{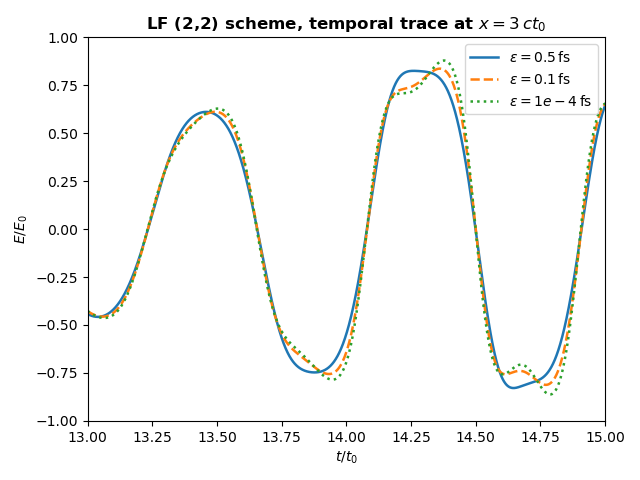}
     \includegraphics[scale = 0.45]{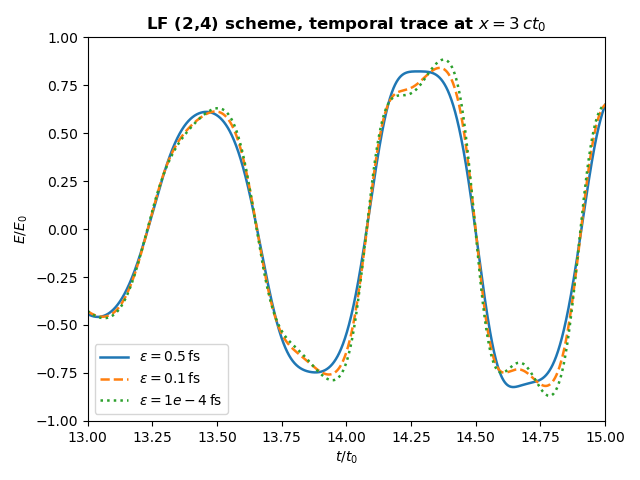}
     \includegraphics[scale = 0.45]{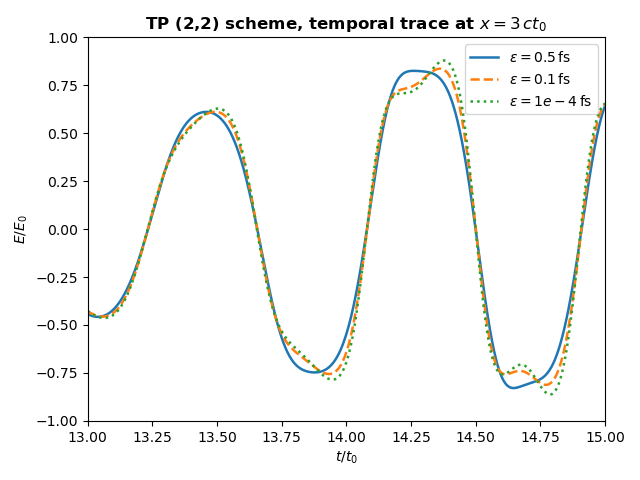}
     \includegraphics[scale = 0.45]{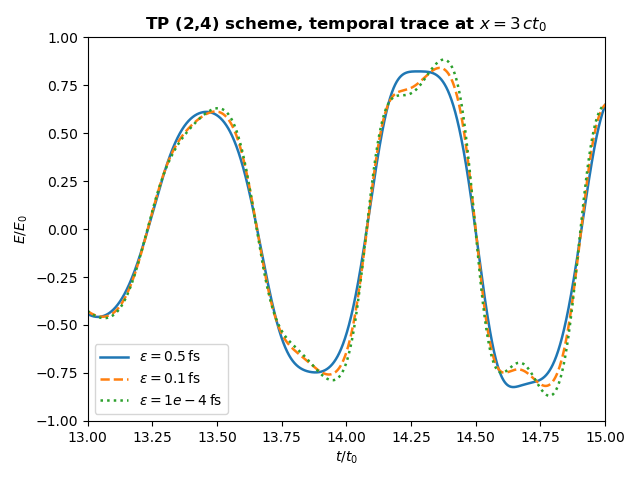}
      \includegraphics[scale = 0.45]{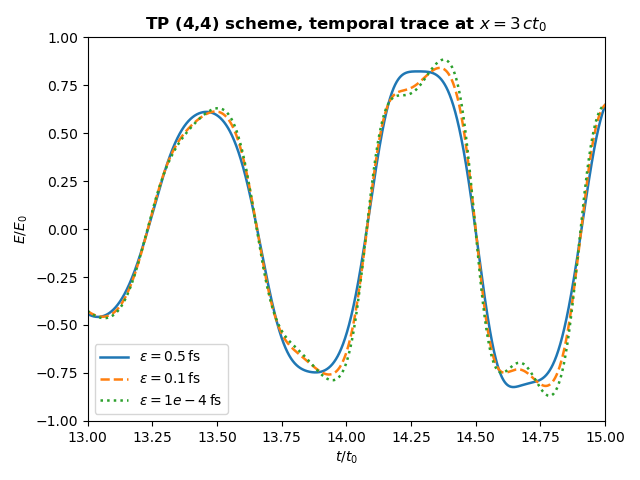}
    \caption{Temporal configuration of $E$ at $x = 3ct_0$ }
    \label{fig: silicab}
\end{figure}
In Fig. \ref{fig: silicaa}-\ref{fig: silicac}, we present the electric field $E$ computed by the leapfrog, trapezoidal and extrapolated trapezoidal FDTD schemes in the time and frequency domain for the model \eqref{KDLaa} with different relaxation times. Self-steepening effects are evident in our simulations, manifesting as sharp features in the pulse profile. This steepening becomes more pronounced as the relaxation time $\varepsilon$ decreases, consistent with prior observations in \cite{PengJCP2020, gilles1999electromagnetic}. In contrast, a finite relaxation time moderates this effect relative to the instantaneous Kerr model. Fig.  \ref{fig: silicac} illustrates the presence of third and fifth harmonic generation, which plays a key role in the formation of daughter pulses. The strength of these harmonic components diminishes with increasing $\varepsilon$, resulting in weaker daughter waves. At position $x = 4ct_0$, their frequencies are observed to be slightly below $3$ and $5$, in agreement with previous findings \cite{PengJCP2020, gilles1999electromagnetic}.

In summary, increasing the relaxation time in the Kerr nonlinearity reduces both the self-steepening and the strength of harmonic generation. Finally, as part of verifying the preservation of physical properties in our simulations, we note that no negative values of $\chi_h^n$ were encountered in any case.

\begin{figure}[H]
    \centering
     \includegraphics[scale = 0.45]{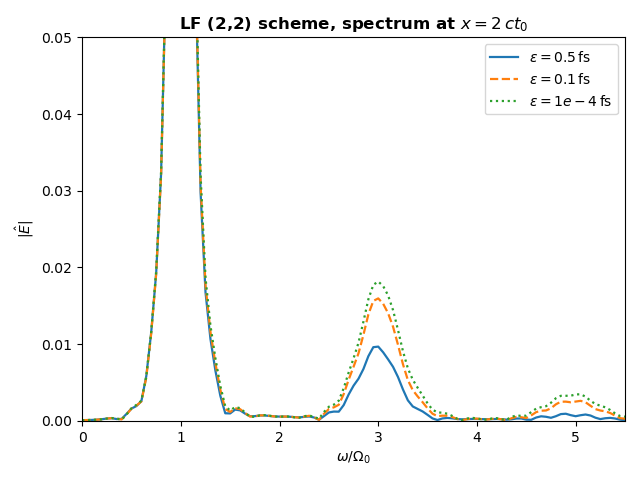}
     \includegraphics[scale = 0.45]{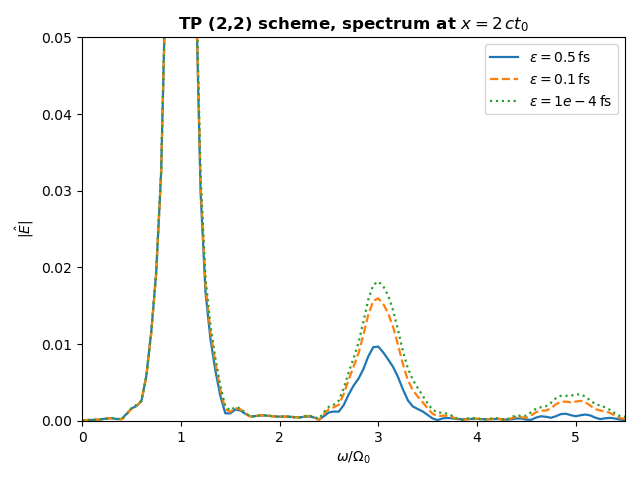}
     \includegraphics[scale = 0.45]{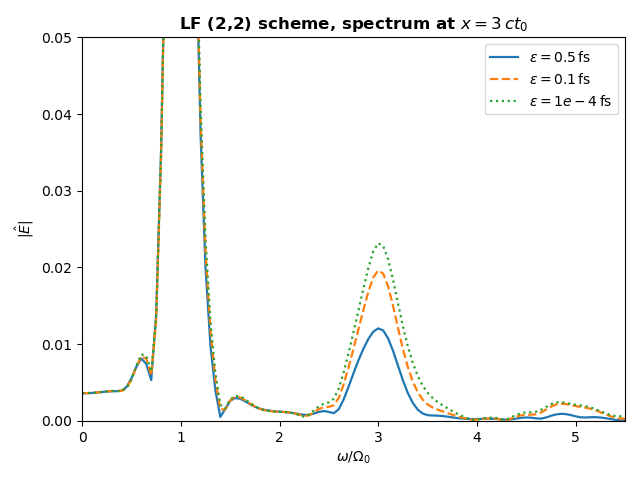}
      \includegraphics[scale = 0.45]{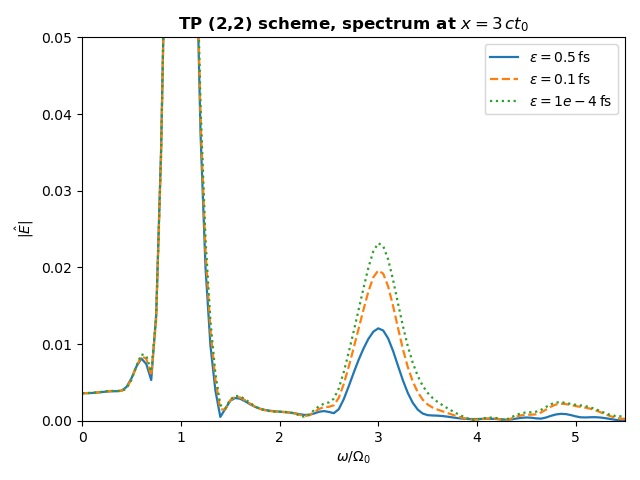}
     \includegraphics[scale = 0.45]{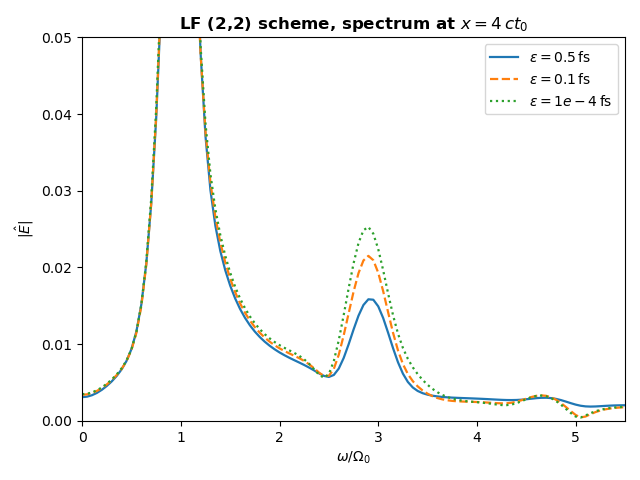}
     \includegraphics[scale = 0.45]{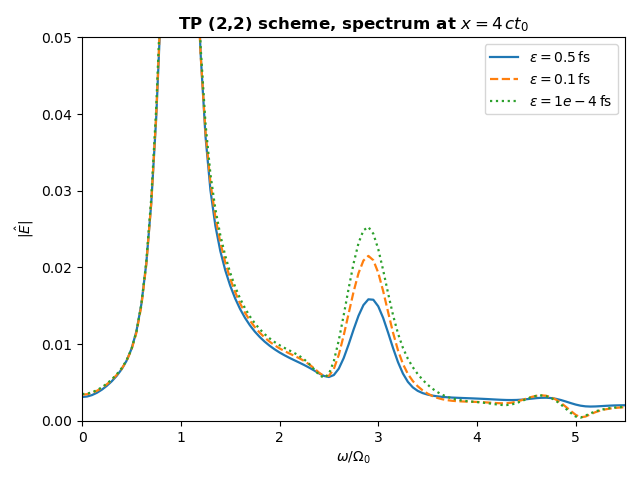}
    \caption{The computed electric field in the frequency domain.}
    \label{fig: silicac}
\end{figure}

\section{Conclusion and future work}\label{sec:conclusion}

In this work, we developed several families of high-order FDTD methods for the one-dimensional Maxwell--Kerr--Debye--Lorentz system, a nonlinear and dispersive model that is widely used for describing ultrashort pulse propagation in optical media. Our primary goal was to design structure preserving numerical schemes, i.e., methods that remain faithful to key properties of the underlying physics such as the positivity of the nonlinear susceptibility, asymptotic  in the stiff relaxation limit, energy dissipation, and robustness across stiff relaxation regimes. To this end, we introduced staggered
$2M$th-order spatial discretizations together with carefully designed seond order time integrators, and we established
rigorous discrete energy estimates that mimic the continuous energy laws. 

More precisely, we constructed and analyzed two classes of $(2,2M)$ finite difference schemes using staggered $2M$th-order spatial discretizations and either explicit leapfrog (FDTD) or implicit trapezoidal time integrators. For both types of schemes, we proved that the discrete susceptibility remains nonnegative provided it is initially nonnegative, and we derived a discrete energy identity showing energy stability with the correct dissipation mechanisms arising from Debye relaxation and Lorentz damping. Moreover, the proposed schemes are asymptotic-preserving in the stiff relaxation limit $\varepsilon\to 0$, so that stability and accuracy are maintained uniformly as the constitutive model transitions from Kerr--Debye--Lorentz to the Kerr--Lorentz regime. While the family of leapfrog schemes requires a CFL restriction leading to conditionally stable schemes, the trapezoidal schemes enjoy unconditional energy stability and thus offer additional flexibility for practical simulations.

To further improve efficiency, we extended the approach to high-order temporal accuracy by applying Richardson extrapolation to the $(2,2M)$ trapezoidal FDTD schemes, resulting in $(2M,2M)$ methods that achieve high-order accuracy in both time and space under the mild scaling $\Delta t=\mathcal O(h)$ and have bounded energy estimates. This substantially reduces the cost relative to standard high-order strategies that require $\Delta t=\mathcal O(h^M)$ to balance temporal and spatial errors. Rigorously proving structure preservation, i.e., positivity preservation, asymptotic preservation and energy decay, in the extrapolated scheme remains an open area for further investigation. 

Numerical experiments confirmed the convergence rates for multiple values of the relaxation time $\varepsilon$, and demonstrated the ability of the proposed schemes to capture nonlinear optical phenomena such as soliton-like propagation, self-steepening, and odd harmonic generation in dispersive Kerr media. 

Our work extends the energy-stable and asymptotic-preserving framework developed in \cite{PengJCP2020} in two important directions. First, while \cite{PengJCP2020} focused on Galerkin-based spatial discretizations, we developed a high-order staggered finite-difference framework that is naturally compatible with FDTD methodology and retains the desirable structure-preserving features. Second, beyond second-order time-stepping, we introduced systematic high-order temporal discretizations through extrapolation, together with uniform-in-time energy bounds for the resulting $(2M,2M)$ schemes. These advances provide a flexible and practical toolkit for high-fidelity simulations of nonlinear wave propagation in optical materials. 

Several directions merit further investigation. An immediate extension is to generalize the proposed structure-preserving schemes to two and three dimensions and to incorporate more realistic geometries and boundary treatments. It would also be of interest to adapt the framework to other nonlinear optical models beyond the Kerr--Debye--Lorentz system, and to develop uncertainty-aware variants that account for variability in material parameters. Finally, these solvers could serve as accurate forward models for inverse problems in parameter identification and computational material design.

\section*{Acknowledgments and Author Statement}
Dr. Bokil's research was partially supported by NSF grants \#1720116 and \#2012882. Dr. Gibson's research was partially supported by NSF grant \#2012882. The authors report that there are no competing interests to declare.

\appendix

\section{Derivation of the Modified Richardson Extrapolation Coefficients}
\label{app:RE-coefficients}

We provide the complete derivation of the Richardson extrapolation coefficients used in \eqref{eq:general-alphas}. Recall that
\[
\tilde{\alpha}_i=P_i(0),
\]
where
\[
P_i(z)
=
L_i(z)
-
\frac{L_i'(0)}{Q'(0)}Q(z).
\]
Therefore,
\begin{equation}
\tilde{\alpha}_i
=
L_i(0)
-
\frac{L_i'(0)}{Q'(0)}Q(0).
\label{eq:app-alpha-start}
\end{equation}
We now derive an explicit expression for these coefficients. From \eqref{Llang},
\[
L_i(0)
=
\prod_{\substack{m=1\\m\neq i}}^{2M-1}
\frac{-r_m}{r_i-r_m}.
\]
Multiplying both the numerator and denominator of each factor by $-1$ gives
\begin{equation}
L_i(0)
=
\prod_{\substack{m=1\\m\neq i}}^{2M-1}
\frac{r_m}{r_m-r_i}.
\label{eq:app-Li0}
\end{equation}
Moreover, logarithmic differentiation of $L_i$ in \eqref{Llang} gives
\[
\frac{L_i'(z)}{L_i(z)}
=
\sum_{\substack{m=1\\m\neq i}}^{2M-1}
\frac{1}{z-r_m}.
\]
Setting $z=0$, we obtain
\[
\frac{L_i'(0)}{L_i(0)}
=
-\sum_{\substack{m=1\\m\neq i}}^{2M-1}
\frac{1}{r_m},
\]
and therefore
\begin{equation}
L_i'(0)
=
-L_i(0)
\sum_{\substack{m=1\\m\neq i}}^{2M-1}
\frac{1}{r_m}.
\label{eq:app-Liprime0}
\end{equation}
Similarly, logarithmic differentiation of $Q$ in \eqref{Qlang} gives
\[
\frac{Q'(z)}{Q(z)}
=
\sum_{m=1}^{2M-1}
\frac{1}{z-r_m}.
\]
Evaluating at $z=0$ yields
\[
\frac{Q'(0)}{Q(0)}
=
-\sum_{m=1}^{2M-1}\frac{1}{r_m}.
\]
Hence,
\begin{equation}
\frac{Q(0)}{Q'(0)}
=
-\frac{1}
{\displaystyle\sum_{m=1}^{2M-1}r_m^{-1}}.
\label{eq:app-Qratio}
\end{equation}
Substituting \eqref{eq:app-Liprime0} and \eqref{eq:app-Qratio} into \eqref{eq:app-alpha-start}, we obtain
\begin{align}
\tilde{\alpha}_i
&=
L_i(0)
-
L_i'(0)\frac{Q(0)}{Q'(0)}
\notag\\
&=
L_i(0)
-
\left(
-L_i(0)
\sum_{\substack{m=1\\m\neq i}}^{2M-1}r_m^{-1}
\right)
\left(
-\frac{1}
{\displaystyle\sum_{m=1}^{2M-1}r_m^{-1}}
\right)
\notag\\
&=
L_i(0)
-
L_i(0)
\frac{
\displaystyle\sum_{\substack{m=1\\m\neq i}}^{2M-1}r_m^{-1}
}{
\displaystyle\sum_{m=1}^{2M-1}r_m^{-1}
}
\notag\\
&=
L_i(0)
\left[
1-
\frac{
\displaystyle\sum_{\substack{m=1\\m\neq i}}^{2M-1}r_m^{-1}
}{
\displaystyle\sum_{m=1}^{2M-1}r_m^{-1}
}
\right].
\label{eq:app-alpha-intermediate}
\end{align}
Writing the first term inside the brackets over the same denominator gives
\begin{align}
1-
\frac{
\displaystyle\sum_{\substack{m=1\\m\neq i}}^{2M-1}r_m^{-1}
}{
\displaystyle\sum_{m=1}^{2M-1}r_m^{-1}
}
&=
\frac{
\displaystyle\sum_{m=1}^{2M-1}r_m^{-1}
-
\displaystyle\sum_{\substack{m=1\\m\neq i}}^{2M-1}r_m^{-1}
}{
\displaystyle\sum_{m=1}^{2M-1}r_m^{-1}
}
=
\frac{r_i^{-1}}
{\displaystyle\sum_{m=1}^{2M-1}r_m^{-1}}.
\end{align}
Therefore,
\begin{equation}
\tilde{\alpha}_i
=
\frac{L_i(0)}
{r_i\displaystyle\sum_{m=1}^{2M-1}r_m^{-1}}.
\label{eq:app-alpha-Li}
\end{equation}
Using \eqref{eq:app-Li0}, we obtain
\begin{equation}
\tilde{\alpha}_i
=
\frac{1}
{r_i\displaystyle\sum_{j=1}^{2M-1}r_j^{-1}}
\prod_{\substack{m=1\\m\neq i}}^{2M-1}
\frac{r_m}{r_m-r_i},
\qquad
i=1,\ldots,2M-1.
\label{eq:app-general-alphas}
\end{equation}

For the dyadic refinement factors,
\[
r_i=2^{-(i-1)},
\]
and therefore
\[
r_i^{-1}=2^{i-1}.
\]
Moreover,
\[
\sum_{j=1}^{2M-1}r_j^{-1}
=
\sum_{j=1}^{2M-1}2^{j-1}
=
\sum_{j=0}^{2M-2}2^j
=
2^{2M-1}-1.
\]
Consequently,
\begin{equation}
\frac{1}
{r_i\displaystyle\sum_{j=1}^{2M-1}r_j^{-1}}
=
\frac{2^{i-1}}{2^{2M-1}-1}.
\label{eq:app-dyadic-factor}
\end{equation}
The product in \eqref{eq:app-general-alphas} can also be expressed in terms of the dyadic refinement factors. For $m\neq i$,
\[
\frac{r_m}{r_m-r_i}
=
\frac{1}{1-r_i/r_m}.
\]
Since
\[
\frac{r_i}{r_m}
=
\frac{2^{-(i-1)}}{2^{-(m-1)}}
=
2^{m-i},
\]
it follows that
\[
\frac{r_m}{r_m-r_i}
=
\frac{1}{1-2^{m-i}}.
\]
Thus,
\begin{equation}
\prod_{\substack{m=1\\m\neq i}}^{2M-1}
\frac{r_m}{r_m-r_i}
=
\prod_{\substack{m=1\\m\neq i}}^{2M-1}
\frac{1}{1-2^{m-i}}.
\label{eq:app-dyadic-product}
\end{equation}
Finally, substituting \eqref{eq:app-dyadic-factor} and \eqref{eq:app-dyadic-product} into \eqref{eq:app-general-alphas} gives
\begin{equation}
\tilde{\alpha}_i
=
\frac{2^{i-1}}{2^{2M-1}-1}
\prod_{\substack{m=1\\m\neq i}}^{2M-1}
\frac{1}{1-2^{m-i}}
,
\qquad
i=1,\ldots,2M-1,
\end{equation}
which is the expression stated in \eqref{eq:general-alphas}.

\bibliographystyle{siam}
\bibliography{references}

\end{document}